\numberwithin{equation}{section}
\newtheorem{prop}{Proposition}[section]
\newtheorem{lemma}[prop]{Lemma}
\newtheorem{thm}[prop]{Theorem}
\newtheorem{cor}[prop]{Corollary}
\theoremstyle{definition}
\newtheorem{defn}[prop]{Definition}
\newtheorem{rmk}[prop]{Remark}
\newtheorem{quest}[prop]{Question}
\newtheorem{ex}[prop]{Example}
\DeclareMathOperator{\rep}{Rep}
\DeclareMathOperator{\spec}{Spec} 
\DeclareMathOperator{\im}{Im}
\DeclareMathOperator{\Gal}{Gal}
\DeclareMathOperator{\proj}{Proj}
\DeclareMathOperator{\Stab}{Stab} 
\DeclareMathOperator{\End}{End}
\DeclareMathOperator{\Aut}{Aut}
\DeclareMathOperator{\brep}{\textbf{Rep}}
\DeclareMathOperator{\baut}{\textbf{Aut}}
\newcommand{\et}{\mathrm{\acute{e}t}}
\DeclareMathOperator{\Pic}{Pic}
\newcommand{\Gbar}{\overline{\mathbf{G}}_{Q,d}}
\newcommand{\Mods}{\mathcal{M}_{Q,d}^{\theta-s}}
\newcommand{\ra}{\rightarrow}
\def\cA{\mathcal A}
\def\cE{\mathcal E}\def\cF{\mathcal F}\def\cG{\mathcal G}
\def\cL{\mathcal L}
\def\cM{\mathcal M}\def\cO{\mathcal O}\def\cP{\mathcal P}
\def\cQ{\mathcal Q}\def\cR{\mathcal R}\def\cT{\mathcal T}
\def\cW{\mathcal W}
\def\AA{\mathbb A}\def\CC{\mathbb C}
\def\GG{\mathbb G}\def\HH{\mathbb H}
\def\NN{\mathbb N}
\def\RR{\mathbb R}
\def\ZZ{\mathbb Z}
\def\fM{\mathfrak M}
\def\fX{\mathfrak X}
\def\fZ{\mathfrak Z}
\newcommand{\op}{\mathrm{op}}
\newcommand{\Sets}{\mathrm{Sets}}
\newcommand{\Mat}{\mathrm{Mat}}
\newcommand{\Hom}{\mathrm{Hom}}
\newcommand{\Br}{\mathrm{Br}}
\newcommand{\ind}{\mathrm{ind}}
\def\GL{\mathbf{GL}}
\def\PGL{\mathbf{PGL}}
\renewcommand{\cF}{\mathcal{F}}
\newcommand{\R}{\mathbb{R}}
\newcommand{\Q}{\mathbb{Q}}
\newcommand{\C}{\mathbb{C}}
\newcommand{\N}{\mathbb{N}}
\newcommand{\A}{\mathbb{A}}
\newcommand{\Z}{\mathbb{Z}}
\newcommand{\lra}{\longrightarrow}
\newcommand{\lmt}{\longmapsto}
\newcommand{\Mod}{\mathcal{M}_{Q,d}^{\theta-ss}}
\newcommand{\Modgs}{\mathcal{M}_{Q,d}^{\theta-gs}}
\newcommand{\Rep}{\rep_{Q,d}}
\newcommand{\G}{\mathbf{G}_{Q,d}}
\newcommand{\pphi}{\varphi}
\newcommand{\Fss}{F_{Q,d}^{\theta-ss}}
\newcommand{\Fgs}{F_{Q,d}^{\theta-gs}}
\newcommand{\Gm}{\mathbb{G}_{m}}
\newcommand{\Repss}{\rep_{Q,d}^{\chi_\theta-ss}}
\newcommand{\Repgs}{\rep_{Q,d}^{\chi_\theta-s}}
\newcommand{\kb}{\overline{k}}
\newcommand{\fGalk}{f_{\Gal_k}}
\newcommand{\chith}{\chi_\theta}
\newcommand{\ka}{\kappa}
\newcommand{\Reps}{\rep_{Q,d}^{\theta-gs}}
\newcommand{\crep}{{\cR}ep}
\newcommand{\cmod}{{\cM}od}
\newcommand{\ov}[1]{\overline{#1}}
\title{Rational points of quiver moduli spaces}
\author{Victoria Hoskins}
\address{Freie Universit\"at Berlin, Arnimallee 3, Raum 011, 14195 Berlin, Germany.}
\email{hoskins@math.fu-berlin.de}
\author{Florent Schaffhauser}
\address{Universidad de Los Andes, Carrera 1 \#18A-12, 111 711 Bogot\'a, Colombia.}
\email{florent@uniandes.edu.co}
\thanks{The authors thank the Institute of Mathematical Sciences of the National University 
of Singapore, where part of this work was carried out, for their hospitality in 2016, and acknowledge the support from U.S. National Science Foundation grants DMS 1107452, 1107263, 1107367 "RNMS: Geometric structures And Representation varieties" (the GEAR Network). The first author is supported by the Excellence Initiative of the DFG at the Freie Universit\"{a}t Berlin.}
\keywords{Algebraic moduli problems (14D20), Geometric Invariant Theory (14L24), Representations of quivers (16G20)}
\begin{document}

\begin{abstract}
For a perfect base field $k$, we investigate arithmetic aspects of moduli spaces of quiver representations over $k$: we study actions of the absolute Galois group of $k$ on the $\overline{k}$-valued points of moduli spaces of quiver representations over $k$ and we provide a modular interpretation of the fixed-point set using quiver representations over division algebras, which we reinterpret using moduli spaces of twisted quiver representations (we show that those spaces provide different $k$-forms of the initial moduli space of quiver representations). Finally, we obtain that stable $\overline{k}$-representations of a quiver are definable over a certain central division algebra over their field of moduli.
\end{abstract}

\maketitle

\tableofcontents

\section{Introduction}

For a quiver $Q$ and a field $k$, we consider moduli spaces of semistable $k$-represen\-tations of $Q$ of fixed dimension $d \in \NN^V$, which were first constructed for an algebraically closed field $k$ using geometric invariant theory (GIT) by King \cite{king}. For an arbitrary field $k$, one can use Seshadri's extension of Mumford's GIT to construct these moduli spaces. More precisely, these moduli spaces are constructed as a GIT quotient of a reductive group $\G$ acting on an affine space $\Rep$ with respect to a character $\chi_\theta$ determined by a stability parameter $\theta \in \ZZ^V$. The stability parameter also determines a slope-type notion of $\theta$-(semi)stability for $k$-representations of $Q$, which involves testing an inequality for all proper non-zero subrepresentations. When working over a non-algebraically closed field, the notion of $\theta$-stability is no longer preserved by base field extension, so one must instead consider $\theta$-geometrically stable representations (that is, representations which are $\theta$-stable after any base field extension), which correspond to the GIT stable points in $\Rep$ with respect to $\chi_\theta$. 

We let $\Mod$ (resp.\ $\Modgs$) denote the moduli space of $\theta$-semistable (resp.\ $\theta$-geometrically stable) $k$-representations of $Q$ of dimension $d$; these are both quasi-projective varieties over $k$ and are moduli spaces in the sense that they corepresent the corresponding moduli functors (\textit{cf.}\ $\S$\ref{quiver_moduli_sect}). For a non-algebraically closed field $k$, the rational points of $\Modgs$ are not necessarily in bijection with the set of isomorphism classes of $\theta$-geometrically stable $d$-dimensional $k$-representations of $Q$. In this paper, we give a description of the rational points of this moduli space for perfect fields $k$. More precisely, for a perfect field $k$, we study the action of the absolute Galois group $\Gal_k = \Gal(\kb/k)$ on $\Mod(\kb)$, whose fixed locus is the set $\Mod(k)$ of $k$-rational points. 
We restrict the action of $\Gal_k$ to $\Modgs \subset \Mod$, so we can use the fact that the stabiliser of every GIT stable point in $\Rep$ is a diagonal copy of $\GG_m$, denoted $\Delta$, in $\G$ (\textit{cf.}\ Corollary \ref{stabiliser_of_GIT_stable_points}) to decompose the fixed locus of $\Gal_k$ acting on $\Modgs(\kb)$ in terms of the group cohomology of $\Gal_k$ with values in $\Delta$ or the (non-Abelian) group $\G$. 

Before we outline the main steps involved in our decomposition, we note that, via a similar procedure, we can describe fixed loci of finite groups of quiver automorphisms acting on quiver moduli spaces in \cite{HS_quiver_autos}; however, the decomposition for the $\Gal_k$-fixed locus in this paper is simpler than the decomposition given in \cite{HS_quiver_autos}, as we can use Hilbert's 90th Theorem to simplify many steps. When $k=\R$ and moduli spaces of quiver representations are replaced by moduli spaces of vector bundles over a real algebraic curve, a description of the real points of these moduli spaces has been obtained in \cite{Sch_JSG} using similar techniques: while the arithmetic aspects are much less involved in that setting, they still arise, in the guise of quaternionic vector bundles.

\subsection{Decomposition of the rational locus}

For the first step, we note that the $\Gal_k$-action on $\Mod(\kb)$ can be induced by compatible $\Gal_k$-actions on $\Rep$ and $\G$; then, we construct maps of sets
\[ \xymatrix@1{ \Rep^{\theta-gs}(\kb)^{\Gal_k}/\G(\kb)^{\Gal_k} \ar@{=}[d] \ar[r]^{ \quad \quad \quad \, f_{\Gal_k}} &  \Modgs(\kb)^{\Gal_k}  \ar@{=}[d]\\
\Rep^{\theta-gs}(k)/ \G(k)  \ar[r]^{ \quad \quad \quad f_{\Gal_k}}  & \Modgs(k). } \]
By Proposition \ref{fibres_of_fGalk},  $f_{\Gal_k}$ is injective, as a consequence of Hilbert's 90th Theorem.
 
The second step is to understand rational points not arising from rational representations by constructing
\[ \cT : \Modgs(k)=\Modgs(\kb)^{\Gal_k} \lra H^2(\Gal_k,\Delta(\kb)) \cong \Br(k) \]
which we call the type map (\textit{cf.}\ Proposition \ref{type_map}). We show that the image of $f_{\Gal_k}$ is $\cT^{-1}([1])$ (\textit{cf.}\ Theorem \ref{image_of_k_rep_in_k_pts_of_the_moduli_scheme}); however, in general $f_{\Gal_k}$ is not surjective.

In the third step, to understand the other fibres of the type map, we introduce the notion of a modifying family $u$ (\textit{cf.}\ Definitions \ref{modifying_fmly_def}) which determines  modified $\Gal_k$-actions on $\Rep(\kb)$ and $\G(\kb)$ such that the induced $\Gal_k$-action on $\Mod(\kb)$ is the original $\Gal_k$-action. Then we define a map of sets
\[ f_{\Gal_k,u} : \: _u\Rep^{\theta-gs}(\kb)^{\Gal_k}/ _u\G(\kb)^{\Gal_k}\lra  \Modgs(\kb)^{\Gal_k} = \Modgs(k),\]
where $ _u\Rep^{\theta-gs}(\kb)^{\Gal_k}$ and $ _u\G(\kb)^{\Gal_k}$ denote the fixed loci for the modified $\Gal_k$-action given by $u$. Moreover, we show that the image of $f_{\Gal_k,u}$ is equal to the preimage under the type map of the cohomology class of a $\Delta(\kb)$-valued 2-cocycle $c_u$ associated to $u$ (\textit{cf.}\ Theorem \ref{fibres_of_the_type_map}).

The fourth step is to describe the modifying families using the group cohomology of $\Gal_k$ in order to obtain a decomposition of the fixed locus for the $\Gal_k$-action on $\Modgs(\kb)$. We obtain the following decomposition indexed by the Brauer group $\Br(k)$ of $k$.

\begin{thm}\label{decomp_thm_gal_intro}
For a perfect field $k$, let $\cT:\Modgs(k) \lra H^2(\Gal_k;\kb^\times)\cong\Br(k)$ be the type map introduced in Proposition \ref{type_map}. Then there is a decomposition $$\Modgs(k) \simeq \bigsqcup_{[c_u]\in \mathrm{Im}\,\cT}  {_u}\Repgs(\kb)^{\Gal_k} /  _u\G(\kb)^{\Gal_k}$$ where 
$ _u\Repgs(\kb)^{\Gal_k} /  _u\G(\kb)^{\Gal_k}$ is the set of isomorphism classes of $\theta$-geome\-trically stable $d$-dimensio\-nal representations of $Q$ that are $k$-rational with respect to the twisted $\Gal_k$-action $\Phi^u$ on $\Rep(\kb)$ defined in Proposition \ref{modified_actions_Gal_case}.
\end{thm}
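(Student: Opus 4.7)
The plan is to assemble the theorem from the four steps sketched in the introduction. Since $\cT\colon\Modgs(k)\to\Br(k)$ is defined on the whole set $\Modgs(k)$, we immediately have the tautological set-theoretic partition
\[
\Modgs(k) \;=\; \bigsqcup_{[c]\in\mathrm{Im}\,\cT} \cT^{-1}([c]),
\]
so the work reduces to identifying each fibre $\cT^{-1}([c])$ with an orbit set of the form $_u\Repgs(\kb)^{\Gal_k}/\,_u\G(\kb)^{\Gal_k}$ for a suitably chosen modifying family $u$.

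First I would verify that every class $[c]\in\mathrm{Im}\,\cT$ is realised as $[c_u]$ for some modifying family $u$; this is the cohomological content of the fourth step and should follow from the definition of a modifying family in Definition \ref{modifying_fmly_def}, which is designed precisely so that the assignment $u\mapsto[c_u]$ hits every class in $H^2(\Gal_k,\Delta(\kb))\cong\Br(k)$ arising as a value of the type map. With such a $u$ chosen, Theorem \ref{fibres_of_the_type_map} gives surjectivity of $f_{\Gal_k,u}$ onto $\cT^{-1}([c_u])$, since the image of $f_{\Gal_k,u}$ is exactly that preimage.

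The remaining content, and the step I expect to be the main obstacle, is the injectivity of $f_{\Gal_k,u}$. The argument should parallel that of Proposition \ref{fibres_of_fGalk} and Corollary \ref{injectivity_of_map_to_k_pts} for the untwisted action. If two $_u\G(\kb)^{\Gal_k}$-orbits on $_u\Repgs(\kb)^{\Gal_k}$ have the same image in $\Modgs(\kb)$, then they must lie in a common $\G(\kb)$-orbit, since $\Modgs$ co-represents the moduli functor over $\kb$ and the GIT stabilisers reduce to the central diagonal $\Delta$ by Corollary \ref{stabiliser_of_GIT_stable_points}. The obstruction to these two orbits coinciding after passing to the fixed locus of the twisted action is then controlled by the kernel of a comparison map $H^1(\Gal_k,\Delta(\kb))\to H^1(\Gal_k,\,_u\G(\kb))$. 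The key observation is that, because $\Delta$ lies in the centre of $\G$, conjugation by any $\G(\kb)$-valued cochain acts trivially on $\Delta(\kb)$, so the twisted $\Gal_k$-action on $\Delta(\kb)\cong\kb^\times$ coincides with the standard Galois action. Hilbert's 90th Theorem then gives $H^1(\Gal_k,\kb^\times)=0$, forcing the kernel, and hence the fibres of $f_{\Gal_k,u}$, to be trivial.

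Combining these three ingredients, namely the tautological partition along the type map, the cohomological realisation of each class by some modifying family $u$, and the bijectivity of each $f_{\Gal_k,u}$ onto its target fibre, yields the stated decomposition of $\Modgs(k)$ indexed by $\mathrm{Im}\,\cT\subseteq\Br(k)$.
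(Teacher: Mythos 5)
Your proposal is correct and follows essentially the same route as the paper: the tautological partition along $\cT$, the observation that every class in $\mathrm{Im}\,\cT$ is realised as $[c_u]$ for some modifying family $u$, and the identification of $\cT^{-1}([c_u])$ with $_u\Repgs(\kb)^{\Gal_k}/\,_u\G(\kb)^{\Gal_k}$ via $f_{\Gal_k,u}$. The only redundancy is that Theorem \ref{fibres_of_the_type_map} already asserts that $f_{\Gal_k,u}$ is bijective (not merely surjective), so your re-derivation of injectivity via Hilbert 90 applied to $\Delta(\kb)\cong\kb^\times$ with the unmodified Galois action—while correct and matching the reasoning inside that theorem's proof—is not needed once Theorem \ref{fibres_of_the_type_map} is invoked.
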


\subsection{Modular interpretation arising from the decomposition}

We give a modular interpretation of the decomposition above, by recalling that $\Br(k)$ can be identified with the set of central division algebras over $k$. We first prove that for a division algebra $D \in \Br(k)$ to lie in the image of the type map, it is necessary that the index $\ind\,(D):= \sqrt{\dim_k(D)}$ divides the dimension vector $d$ (\textit{cf.}\ Proposition \ref{nec_con_div_alg}). As a corollary, we deduce that if $d$ is not divisible by any of the indices of non-trivial central division algebras over $k$, then $\Modgs(k)$ is the set of isomorphism classes of $d$-dimensional $k$-representations of $Q$. We can interpret the above decomposition by using representations of $Q$ over division algebras over $k$.

\begin{thm}\label{thm_Galois_div_alg_intro}
Let $k$ be a perfect field. For a division algebra $D \in \mathrm{Im} \: \cT \subset \Br(k)$, we have $d = \ind\,(D) d'_D$ for some dimension vector $d'_D \in \NN^V$ and there is a modifying family $u_D$ and smooth affine $k$-varieties $\rep_{Q,d'_D,D}$ (resp.\ $\mathbf{G}_{Q,d'_D,D}$) constructed by Galois descent such that
\[ \rep_{Q,d'_D,D}(k) = \bigoplus_{a \in A} \Hom_{\cmod(D)}(D^{ d'_{D,t(a)}}, D^{ d'_{D,h(a)}}) = {_{u_D}}\rep_{Q,d}(\kb)^{\Gal_k}\]
and 
\[ \mathbf{G}_{Q,d'_D,D}(k) = \prod_{v \in V} \Aut_{\cmod(D)}(D^{ d'_{D,v}})={_{u_D}}\mathbf{G}_{Q,d}(\kb)^{\Gal_k}.  \]
Furthermore, we have a decomposition
\[ \Modgs(k) \cong  \bigsqcup_{D\in \mathrm{Im}\,\cT}  \rep_{Q,d'_D,D}^{\theta-gs}(k) / \mathbf{G}_{Q,d'_D,D}(k), \]
where the subset indexed by $D$ is the set of isomorphism classes of $d'_D$-dimensional $\theta$-geometrically stable $D$-representations of $Q$.
\end{thm}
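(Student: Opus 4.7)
The plan is to combine Proposition \ref{nec_con_div_alg} with the abstract decomposition of Theorem \ref{decomp_thm_gal_intro} and reinterpret each piece of that decomposition via Galois descent. For $D \in \mathrm{Im}\,\cT \subset \Br(k)$, Proposition \ref{nec_con_div_alg} gives the divisibility $\mathrm{ind}(D) \mid d$, which defines a dimension vector $d'_D \in \NN^V$ by $d = \mathrm{ind}(D)\, d'_D$. The task is then to identify the twisted orbit set ${_{u_D}}\Repgs(\kb)^{\Gal_k}/{_{u_D}}\G(\kb)^{\Gal_k}$ appearing in Theorem \ref{decomp_thm_gal_intro} with the orbit set of $D$-linear representations.

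Next, I would construct the modifying family $u_D$ from a normalized 2-cocycle $c : \Gal_k \times \Gal_k \to \kb^\times$ representing $[D] \in H^2(\Gal_k, \kb^\times) \cong \Br(k)$. The crossed product construction realizes $D$ as $(\kb/k, \Gal_k, c)$, and after choosing a $\kb$-basis of $D$ one obtains a $c$-twisted $\sigma$-semilinear action of $\Gal_k$ on $\kb^{\mathrm{ind}(D)} \cong D \otimes_k \kb / (\text{splitting idempotent})$ whose fixed vectors realize $D$ as a left $D$-module of rank one. Taking $d'_{D,v}$ copies of this at each vertex $v \in V$ and base-changing to $\kb$ gives a twisted $\Gal_k$-action on $\kb^{d_v}$; packaging these together across all vertices and arrows yields a modifying family $u_D$ for $\G(\kb)$ acting on $\Rep(\kb)$, whose associated 2-cocycle $c_{u_D}$ represents the Brauer class $[D]$.

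By classical Galois descent for inner forms of $\GL_n$ and its natural linear representations, which is precisely the construction of unit groups and matrix spaces of central simple algebras, the twisted fixed loci identify as
\[ {_{u_D}}\rep_{Q,d}(\kb)^{\Gal_k} = \bigoplus_{a \in A} \Hom_{\cmod(D)}\bigl(D^{d'_{D,t(a)}}, D^{d'_{D,h(a)}}\bigr), \quad {_{u_D}}\G(\kb)^{\Gal_k} = \prod_{v \in V} \Aut_{\cmod(D)}(D^{d'_{D,v}}). \]
Since the underlying schemes $\rep_{Q,d}$ and $\G$ are affine, the Galois descent data are effective, producing smooth affine $k$-schemes $\rep_{Q,d'_D,D}$ and $\mathbf{G}_{Q,d'_D,D}$ with the stated $k$-points. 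Substituting these identifications into the decomposition of Theorem \ref{decomp_thm_gal_intro}, and using the bijection between $\Br(k)$ and central division $k$-algebras, then yields the claimed decomposition of $\Modgs(k)$, with the component indexed by $D$ interpreted as isomorphism classes of $\theta$-geometrically stable $d'_D$-dimensional $D$-representations of $Q$.

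The main obstacle is twofold. First, one must verify that the cocycle $c_{u_D}$ extracted from the crossed-product data represents the Brauer class $[D]$ in the precise sense required by Theorem \ref{decomp_thm_gal_intro}, so that the fibre $\cT^{-1}([D])$ is actually the component parametrized by $u_D$ rather than by some other cocycle in the same cohomology class (this should follow by tracking the cocycles through the stabiliser extension that defines the type map, but requires care). Second, one must match the notion of $\theta$-geometric stability used on $\rep_{Q,d'_D,D}(k)$, defined via $D$-linear subrepresentations, with the one inherited from the underlying $\kb$-representation; this reduces to observing that, under the twisted action, $\Gal_k$-stable $\kb$-subrepresentations are exactly the $D$-submodules, together with a slope comparison that introduces a uniform factor of $\mathrm{ind}(D)$ and hence preserves the (semi)stability inequalities.
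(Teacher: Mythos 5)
Your overall strategy is the one the paper follows: combine the abstract decomposition of Theorem \ref{decomp_thm_gal_intro} with Proposition \ref{nec_con_div_alg} for divisibility, construct the modifying family $u_D$ and the $k$-varieties by Galois descent (this is Proposition \ref{exists_k_var_for_D_reps}), match the stability notions (Lemma \ref{lem_geom_s_div_alg}), and then assemble the pieces (Theorem \ref{fibres div alg}). However, two steps in your sketch are not quite right.

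First, the claim that ``the crossed product construction realizes $D$ as $(\kb/k,\Gal_k,c)$'' is false. A crossed product $(L/k,\Gal(L/k),c)$ over a finite Galois extension $L/k$ has degree $[L:k]$ over $k$, and it coincides with the division algebra $D$ only when $D$ admits a Galois maximal subfield --- i.e.\ when $D$ is a crossed-product algebra. By a theorem of Amitsur, not every central division algebra is a crossed product. (Over $\kb$ the expression does not even make sense, since $\kb/k$ is infinite.) The correct and canonical route --- the one the paper takes --- is to use the index: since $e := \ind\,(D)$, the class $D$ lies in the image of $\delta_e : H^1(\Gal_k,\PGL_e(\kb)) \to H^2(\Gal_k,\kb^\times)$, so one may pick a $1$-cocycle $\bar{a}_D$ in $\PGL_e(\kb)$ representing $D$, lift each $\bar{a}_{D,\tau}$ arbitrarily to $a_{D,\tau}\in\GL_e(\kb)$, and take $u_{D,\tau,v}$ to be the block-diagonal matrix $\diag(a_{D,\tau},\dots,a_{D,\tau})$ (with $d'_{D,v}$ blocks). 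Its associated $2$-cocycle then automatically represents $D$. Your crossed-product detour, as written, presupposes a structure that $D$ may not have; the lift-a-$\PGL_e$-cocycle route avoids this and achieves exactly the semilinear ``$D$-structure'' you were aiming for.

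Second, the stability matching is underspecified: you observe that $\Gal_k$-invariant $\kb$-subrepresentations descend to $D$-submodules with a uniform slope factor, but $\theta$-geometric stability of a $D$-representation $W$ requires $\theta$-stability of $L\otimes_k W$ (suitably Morita-reduced) for \emph{every} field extension $L/k$, not only for $\kb/k$. One needs the analogue of Lemma \ref{lem_geom_s_div_alg}: stability of $\kb\otimes_k W$ as a $\kb$-representation implies geometric stability of $W$ as a $D$-representation. This is proved by reducing, as in Proposition \ref{sst_and_field_ext}, first to algebraic extensions $L\subset\kb$ (where the base-change compatibility is immediate) and then to the purely transcendental case $L = k(X)$, using that $\kb$-stability already implies $\kb$-geometric stability by Corollary \ref{charac_of_geom_stability}. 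Your slope-comparison observation is a correct ingredient but does not by itself close this step.
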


For example, if $k = \RR \hookrightarrow \kb = \CC$, then as $\Br(\RR) = \{ \RR, \HH \}$, there are two types of rational points in $\Modgs(\RR)$, namely $\RR$-representations and $\HH$-representations of $Q$ and the latter can only exists if $d$ is divisible by $2 = \ind\,(\HH)$ (\textit{cf.}\ Example \ref{quaternionic_rep}).

\subsection{Gerbes and twisted quiver representations}

We can also interpret $\Br(k)$ as the set of isomorphism classes of $\GG_m$-gerbes over $\spec k$, and show that the type map $\cT$ can be defined for any field $k$ using the fact that the moduli stack of $\theta$-geometrically stable $d$-dimensional $k$-representations of $Q$ is a $\GG_m$-gerbe over $\Modgs$ (\textit{cf.}\ Corollary \ref{two_type_maps_agree}).

For any field $k$, we introduce a notion of twisted $k$-representations of a quiver $Q$ in Definition \ref{def_twisted_rep}, analogous to the notion of twisted sheaves due to C\u{a}ld\u{a}raru, de Jong and Lieblich  \cite{cald,dJ,Lieblich}, and we describe the moduli of twisted quiver representations. In particular, we show that twisted representations of $Q$ are representations of $Q$ over division algebras, by using C\u{a}ld\u{a}raru's description of twisted sheaves as modules over Azumaya algebras; therefore, the decomposition in Theorem \ref{thm_Galois_div_alg_intro} can also be expressed in terms of twisted quiver representations (\textit{cf.}\ Theorem \ref{decomp_twisted_reps}). Consequently, we construct moduli spaces of twisted $\theta$-geometrically stable $k$-representations of $Q$ and show, for Brauer classes in the image of the type map $\cT$, that these moduli spaces give different forms of the moduli space $\Modgs$. 

\begin{thm}\label{forms_of_moduli_sp}
For a field $k$ with separable closure $k^s$, let $\alpha : \fX \lra \spec k$ be a $\GG_m$-gerbe over $k$ and let $D$ be the corresponding central division algebra over $k$. Then the stack of  $\alpha$-twisted $\theta$-geometrically stable $d'$-dimensional $k$-representations
\[^\alpha \fM_{Q,d',k}^{\theta-gs}\cong[\rep_{Q,d',D}^{\theta-gs}/\mathbf{G}_{Q,d',D}]\]
is a $\GG_m$-gerbe over its coarse moduli space $\cM^{\theta-gs}_{Q,d',D}:=\rep_{Q,d',D}^{\theta-gs}/ \mathbf{G}_{Q,d',D}$ (in the sense of stacks). The moduli space $\cM^{\theta-gs}_{Q,d',D}$ is a coarse moduli space for: 
\begin{enumerate}
\item the moduli functor of $\theta$-geometrically stable $d'$-dimensional $D$-representa\-tions of $Q$, and 
\item the moduli functor of $\alpha$-twisted  $\theta$-geometrically stable $d'$-dimensional $k$-representations of $Q$.
\end{enumerate}
If, moreover, $D$ lies in the image of the type map $\cT$, then $d= \ind\,(D)d'$ for some dimension vector $d'$ and $\cM^{\theta-gs}_{Q,d',D}$ is a $k$-form of the moduli space $\cM^{\theta-gs}_{Q,d,k^s}$.
\end{thm}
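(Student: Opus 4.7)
The plan is to argue the four assertions by descending from the separable closure $k^s$, where the Brauer class $\alpha$ splits and everything reduces to the standard quiver moduli setup.

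For the stacky presentation, I would invoke the C\u{a}ld\u{a}raru--de Jong--Lieblich equivalence (as developed in the earlier twisted-quiver section of the paper) between $\alpha$-twisted sheaves on $\spec k$ and modules over the Azumaya $k$-algebra representing $\alpha$, which over a point can be taken to be $D$ itself. The quiver structure transports along this equivalence, so the groupoid of $\alpha$-twisted $d'$-dimensional $k$-representations of $Q$ is equivalent to the groupoid of $d'$-dimensional $D$-representations of $Q$. By construction of $\rep_{Q,d',D}$ and $\mathbf{G}_{Q,d',D}$ in Theorem \ref{thm_Galois_div_alg_intro}, this latter groupoid is presented by the action of $\mathbf{G}_{Q,d',D}$ on $\rep_{Q,d',D}$; restricting to the $\theta$-geometrically stable locus yields the first assertion.

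For the gerbe statement and the two co-representability claims, I would work over $k^s$ and descend. After base change, $D \otimes_k k^s \cong \mathrm{Mat}_n(k^s)$ with $n=\ind(D)$, and Morita equivalence identifies $\rep_{Q,d',D} \otimes_k k^s$ with $\rep_{Q,nd',k^s}$ and $\mathbf{G}_{Q,d',D} \otimes_k k^s$ with $\mathbf{G}_{Q,nd',k^s}$, compatibly with the actions and with the $\theta$-geometrically stable loci. Hence over $k^s$ the quotient stack becomes the ordinary quiver moduli stack, which by Corollary \ref{stabiliser_of_GIT_stable_points} is a $\GG_m$-gerbe over its GIT coarse moduli space, and that coarse space co-represents the standard functor of $\theta$-geometrically stable representations. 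Being a $\GG_m$-gerbe and co-representing a given functor are both properties fpqc-local on the base, so Galois descent transfers both to $k$: the stack is a $\GG_m$-gerbe over $\cM^{\theta-gs}_{Q,d',D}$, and the latter co-represents both the $D$-representation functor and, via the first step, the $\alpha$-twisted representation functor.

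For the final form assertion, the base-change computation above provides a canonical isomorphism $\cM^{\theta-gs}_{Q,d',D} \otimes_k k^s \cong \cM^{\theta-gs}_{Q,nd',k^s}$. When $D \in \mathrm{Im}\,\cT$, Proposition \ref{nec_con_div_alg} gives $d = \ind(D)\, d' = nd'$, so the right-hand side is $\cM^{\theta-gs}_{Q,d,k^s}$, exhibiting $\cM^{\theta-gs}_{Q,d',D}$ as a $k$-form. The main obstacle will be verifying that Morita equivalence over $k^s$ intertwines the $\theta$-geometric stability conditions on both sides: a priori, stability of a $D$-representation tests slope inequalities only against $D$-sub\-represen\-tations, while stability of the corresponding $k^s$-representation of dimension $nd'$ tests them against all $k^s$-subrepresentations. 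Reconciling these, either by a direct slope argument through Morita or by translating through the GIT characterisation of geometric stability and invoking base-change compatibility of GIT, is the technical heart of the proof.
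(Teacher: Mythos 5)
Your proposal is correct and tracks the paper's argument closely, but the emphasis is shifted in a way worth noting. The paper assembles the theorem out of results it has already established: Proposition~\ref{descr_tw_stack} gives the quotient-stack presentation of the twisted moduli stack, Proposition~\ref{equiv twist rep D} together with Lemma~\ref{lemma_stab_twisted_reps_div} supply the equivalence $\crep_k(Q,\alpha)\cong\crep_D(Q)$ respecting dimensions and stability, and co-representability of the $D$-representation functor is proved \emph{directly over $k$} by running the argument of Theorem~\ref{GIT_const_of_ModQd} on the $k$-variety $\rep_{Q,d',D}$, using the tautological family obtained by Galois descent from the one on $\rep_{Q,d,k^s}$; the final form statement then follows as in Proposition~\ref{forms_of_stack}. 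You instead propose to establish co-representability and the gerbe property \emph{over $k^s$} (where Morita reduces everything to ordinary quiver moduli) and then descend, asserting that these are fpqc-local. For the gerbe property this is fine, but for co-representability of the Set-valued moduli functor the claim "fpqc-local on the base" is doing more work than it may appear: one must first identify the restriction of the $D$-representation functor along $Sch_{k^s}\to Sch_k$ with the ordinary functor $F_{Q,d,k^s}^{\theta-gs}$ via Morita, and then show that the natural transformation constructed over $k^s$ is Galois-equivariant so that it descends, together with its universal property — the paper's direct construction over $k$ via the descended tautological family sidesteps this bookkeeping. You correctly identify the genuine technical point — that the equivalence intertwines $\theta$-geometric stability on $D$-subrepresentations with $\theta$-geometric stability on $k^s$-subrepresentations — and your second proposed route (the GIT characterisation plus base-change compatibility of the GIT (semi)stable loci) is exactly what the paper uses, via Lemma~\ref{lem_geom_s_div_alg}, Lemma~\ref{stability_over_separably_closed_field_implies_geometric_stability}, and the identity $\rep_{Q,d',D}^{\chi_\theta-s}=\rep_{Q,d',D}^{\theta-gs}$ established just before the proof.
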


As an application of these ideas, we define a Brauer class which is the obstruction to the existence of a universal family on $\Modgs$ and show that this moduli space admits a twisted universal family of quiver representations  (\textit{cf.}\ Proposition \ref{twisted_univ_family}).

\subsection{Applications to fields of moduli and fields of definition}

The problem that we study in the present paper is a special case of the rather general phenomenon that fields of moduli are usually smaller than fields of definition, which, to our knowledge, has not been worked out explicitly in the quiver setting. For a $\kb$-representation $W$ of $Q$, the field of moduli $k_W$ (with respect to the extension $\kb/k$) is the intersection of all intermediate fields $k \subset L \subset \kb$ over which $W$ is definable (that is, there exists an $L$-representation $W'$ such that $\kb \otimes_L W' \simeq W$). If there is a minimal field of definition, this is necessarily the field of moduli but, in general, a representation may not be definable over its field of moduli. 

This phenomenon was first studied in the context of algebraic curves and Abelian varieties (\cite{Weil_field_of_def,Matsusaka,Shimura_autom_functions}). By Weil's descent theorem, an algebraic curve with trivial automorphism group can be defined over its field of moduli. The situation is more complicated for Abelian varieties but similar rationality results hold in certain cases (\cite{Shimura_Taniyama,Shimura,Koizumi}). Quivers representations always have non-trivial automorphisms, thus it is natural to look for cohomological obstructions to a $\kb$-representation being definable over its field of moduli: this is for instance how the analogous problem for algebraic covers is studied in \cite{DD} (see also \cite{CH,DH,DDH}), whose results are then applied to curves with non-trivial automorphisms in \cite{DE}, by reducing the problem for a curve $X$ to the study of the algebraic cover $X\lra X/\Aut(X)$. The study of the cohomological obstructions that appear in the quiver setting (which we treat in detail in Sections \ref{Galois_actions_sect} and \ref{sec_gerbes_and_twisted_reps} of this paper) leads to the following result.

\begin{thm}\label{field_of_moduli_and_domain_of_def}
Let $k$ be a perfect field. Let $W$ be a $\theta$-stable $d$-dimensional $\kb$-representation of $Q$ and let $O_W\in \Modgs(\kb)$ be the associated point in the moduli space of $\theta$-geometrically stable $k$-representations. Then the field of moduli $k_W$ is isomorphic to the residue field $\kappa(O_W)$ and there is a central division algebra $D_W$ over $k_W$ and a $\theta$-geometrically stable $D_W$-representation $W'$ of $Q$, unique up to isomorphism, such that $\kb \otimes_{k_W} W' \simeq W$ as $\kb$-representations.
\end{thm}

\noindent Here we view $\kb \otimes_{k_W} W' \simeq W$ as a $\kb$-representation via Remark \ref{rmk_on_Morita_equiv}. The fact that the field of moduli of an object is isomorphic to the residue field of the corresponding point in the moduli space is a phenomenon that often occurs: it is for instance true for algebraic curves over a perfect field (\cite{Baily,Sekiguchi}), for Abelian varieties (in characteristic zero, the result for curves is deduced from the result for Abelian varieties via the Torelli theorem \cite{Baily}) and for algebraic covers (see \cite{RW}). The second part of Theorem \ref{field_of_moduli_and_domain_of_def} (the actual \lq domain of definition' of an object, knowing its field of moduli) is more delicate and depends on the explicit nature of the problem and the structure of the automorphism groups: an algebraic curve, for instance, is definable over a finite extension of its field of moduli (\cite{Koizumi,Huggins_thesis}), while for Abelian varieties, there holds an analogue of the second part of Theorem \ref{field_of_moduli_and_domain_of_def}, involving central division algebras over the field of moduli (see \cite{Shimura_div_alg_and_ab_var,Baily_report}). The cohomological obstruction to $W$ being defined over $k_W$ is described by Corollary \ref{cor_coh_obs_define_field_of_moduli}.

\subsection{Connections to species in representation theory and related open questions}

There is a close link between our work and the study of species in representation theory. The theory of species was introduced by Gabriel \cite{Gabriel} and studied by Dlab and Ringel \cite{RingelDlab} in order to classify finite-dimensional algebras over non-algebraically closed fields; in fact, species can be used to classify all hereditary algebras over perfect fields up to Morita equivalence \cite[Corollary 3.13]{Lemay}.

For a field $k$, a $k$-species $S$ of a quiver $Q$ is the data of a $k$-division algebra $S_v$ for each vertex $v \in V$ and a $(S_{h(a)}, S_{t(a)})$-bimodule $S_a$ for each arrow $s \in A$ (all of whose dimensions are controlled by a fixed valuation on $Q$). For a perfect field $k$ and quiver $Q$ without oriented cycles, the associated tensor algebra of a $k$-species $S$ is a $k$-form of the path algebra of a quiver $Q'$ up to Morita equivalence (\textit{cf.}\ \cite[Corollary 5.18]{Lemay}). If $k$ is also a finite field, then Hubery \cite{Hubery} shows that Morita equivalence can be replaced by isomorphism and there is a (covariant) automorphism $\sigma$ of $Q'$ that folds into $Q$ (thus $Q = Q'/\!\langle\sigma\rangle$ is the quotient quiver in the language of \cite{HS_quiver_autos}). Moreover, he extends Kac's description \cite{Kac} of the dimension vectors of indecomposible representations of a quiver $Q$ over a finite field as the positive roots of an associated Kac-Moody algebra to species over finite fields \cite{Hubery}.

In the study of species over a perfect field $k$, the action of the absolute Galois group and different forms of the path algebra arise. Since Galois cohomology is crucial to our study of the rational points of quiver moduli spaces over $k$ and leads to twisted quiver representations and different forms of quiver moduli spaces, one can ask what is the relationship between the rational points of these twisted moduli spaces and modules over the tensor algebra of species. Since the results mentioned above for species assume that the quiver has no oriented cycles, but we do not make such an assumption, one may ask whether we can say more about the rational points of quiver moduli spaces over perfect fields if we impose some conditions on the quiver. For example, over the reals, it is relatively easy to construct quaternionic representations of a quiver with loops (or oriented cycles); however, if the quiver does not have any oriented cycles, it is not clear whether the type map is ever surjective.

\begin{quest} 
How is the image of the type map $\cT :\Modgs(k) \lra \Br(k)$ related to the quiver $Q$? For example, over the reals, is the image of the type map trivial when $Q$ has no oriented cycles?
\end{quest}

In \cite{Hubery}, quiver automorphism also arise in the study of hereditary algebras over perfect fields. Over the reals, this should be related to our study in \cite[$\S$4]{HS_quiver_autos} of special submanifolds called branes in hyperk\"{a}hler quiver varieties, which also uses both complex conjugation and quiver automorphisms together.

\subsection{Structure of the paper}

The structure of this paper is as follows. In $\S$\ref{quiver_moduli_sect}, we explain how to construct moduli spaces of representations of a quiver over an arbitrary field $k$ following King \cite{king}, and we examine how (semi)stability behaves under base field extension. In $\S$\ref{Galois_actions_sect}, we study actions of $\Gal_k$ for a perfect field $k$ and give a decomposition of the rational points of $\Modgs$ indexed by the Brauer group, using only elementary considerations from group cohomology. In $\S$\ref{sec_gerbes_and_twisted_reps}, we see the benefit of rephrasing the work of $\S$\ref{Galois_actions_sect} in the more sophisticated language of stacks and gerbes, which gives a quicker and more conceptual way to understand the arithmetic aspects of quiver representations over a field (Theorem \ref{decomp_twisted_reps}). In particular, we interpret our decomposition result using twisted quiver representations and show that moduli spaces of twisted quiver representations give different forms of the moduli space $\Modgs$. Finally, in $\S$\ref{sec_field_of_moduli_and_domain_of_def}, we prove Theorem \ref{field_of_moduli_and_domain_of_def}.

\noindent \textbf{Notation.} For a scheme $S$ over a field $k$ and a field extension $L/k$, we denote by $S_L$ the base change of $S$ to $L$. For a point $s \in S$, we let $\kappa(s)$ denote the residue field of $s$. A quiver $Q=(V,A,h,t)$ is an oriented graph, consisting of a finite vertex set $V$, a finite arrow set $A$, a tail map $t:A\lra V$ and a head map $h:A\lra V$.

\noindent \textbf{Acknowledgements.} We thank the referees of a previous version of this paper, for suggesting that we relate our results to twisted quiver representations and for calling our attention to other arithmetic contexts in which fields of moduli are also studied. V.H. would like to thank Simon Pepin Lehalleur for several very fruitful discussions, which helped turned the former suggestion into what is now $\S$\ref{sec_gerbes_and_twisted_reps}.

\section{Quiver representations over a field}\label{quiver_moduli_sect}

Let $Q=(V,A,h,t)$ be a quiver and let $k$ be a field.

\begin{defn}\label{k_rep_def}
A \textit{representation of $Q$ in the category of $k$-vector spaces} (or \textit{$k$-representation} of $Q$) is a tuple $W:=((W_v)_{v\in V}, (\pphi_a)_{a\in A})$ where:
\begin{itemize}
\item $W_v$ is a finite-dimensional $k$-vector space for all $v\in V$;
\item  $\pphi_a: W_{t(a)}\lra W_{h(a)}$ is a $k$-linear map for all $a\in A$.
\end{itemize} 
\end{defn}

\noindent There are natural notions of morphisms of quiver representations and subrepresentations. The dimension vector of a $k$-representation $W$ is the tuple $d=(\dim_k W_v)_{v\in V}$; we then say $W$ is $d$-dimensional.

\subsection{Slope semistability}\label{slope_semistability}

Following King's construction of moduli spaces of quiver representations over an algebraically closed field \cite{king}, we introduce a stability parameter $\theta:=(\theta_v)_{v\in V} \in \ZZ^V$ and the associated slope function $\mu_\theta$, defined for all non-zero $k$-representations $W$ of $Q$, by $$\mu_\theta(W) := \mu^k_\theta(W) := \frac{\sum_{v\in V} \theta_v\dim_k W_v}{\sum_{v\in V} \dim_k W_v} \in \Q.$$

\begin{defn}
A $k$-representation $W$ of $Q$ is:
\begin{enumerate}
\item $\theta$\textit{-semistable} if $\mu_\theta(W') \leq \mu_\theta(W)$ for all $k$-subrepresentation $0 \neq W'\subset W$.
\item $\theta$\textit{-stable} if $\mu_\theta(W') < \mu_\theta(W)$ for all $k$-subrepresentation $0 \neq W'\subsetneq W$.
\item $\theta$\textit{-polystable} if it is isomorphic to a direct sum of $\theta$-stable representations of equal slope.
\end{enumerate}
\end{defn}
 
\noindent The category of $\theta$-semistable $k$-representations of $Q$ with fixed slope $\mu\in \Q$ is an Abelian, Noetherian and Artinian category, so it admits Jordan-H\"older filtrations. The simple (resp.\ semisimple) objects in this category are precisely the stable (resp.\ polystable) representations of slope $\mu$ (proofs of these facts are readily obtained by adapting the arguments of \cite{Seshadri_S_equiv} to the quiver setting). The graded object associated to any Jordan-H\"older filtration of a semistable representation is  by definition polystable and its isomorphism class as a graded object is independent of the choice of the filtration. Two $\theta$-semistable $k$-representations of $Q$ are called $S$-equivalent if their associated graded objects are isomorphic.

\begin{defn}\label{scss_def}
Let $W$ be a $k$-representation of $k$; then a $k$-subrepresentation $U \subset W$ is said to be \textit{strongly contradicting semistability} (\textit{scss}) with respect to $\theta$ if its slope is maximal among the slopes of all subrepresentations of $W$ and, for any $W' \subset W$ with this property, we have $U\subset W' \Rightarrow U = W'$.
\end{defn}

\noindent For a proof of the existence and uniqueness of the \textit{scss} subrepresentation, we refer to \cite[Lemma 4.4]{Reineke_lectures}. The \textit{scss} subrepresentation satisfies $\Hom(U,W/U)=0$. 
Using the existence and uniqueness of the \textit{scss}, one can inductively construct a unique Harder--Narasimhan filtration; for example, see \cite[Lemma 4.7]{Reineke_lectures}.

We now turn to the study of how the notions of semistability and stability behave under a field extension $L/k$. A $k$-representation $W=((W_v)_{v\in V},(\pphi_a)_{a\in A})$ of $Q$ determines an $L$-representation $L\otimes_k W := ((L\otimes_k W_v)_{v\in V}, (Id_L\otimes \pphi_a)_{a\in A})$ (or simply $L \otimes W)$, where $L\otimes_k W_v$ is equipped with its canonical structure of $L$-vector space and $Id_L\otimes \pphi_a$ is the extension of the $k$-linear map $\pphi_a$ by $L$-linearity. Note that the dimension vector of $L\otimes_k W$ as an $L$-representation is the same as the dimension vector of $W$ as a $k$-representation. We prove that semistability of quiver representations is invariant under base field extension, by following the proof of the analogous statement for sheaves given in \cite[Proposition 3]{Langton} and \cite[Theorem 1.3.7]{Huybrechts_Lehn}.

\begin{prop}\label{sst_and_field_ext}
Let $L/k$ be a field extension and let $W$ be a $k$-representation. For a stability parameter $\theta \in \ZZ^V$, the following statements hold.
\begin{enumerate}
\item If $L\otimes_k W$ is $\theta$-semistable (resp.\ $\theta$-stable) as an $L$-representation, then $W$ is $\theta$-semistable (resp.\ $\theta$-stable) as a $k$-representation.
\item If $W$ is $\theta$-semistable as a $k$-representation, then $L\otimes_k W$ is $\theta$-semistable as an $L$-representation.
\end{enumerate}
Moreover, if $(W^i)_{1\leq i\leq l}$ is the Harder-Narasimhan filtration of $W$, then $(L\otimes_k W^i)_{1\leq i\leq l}$ is the Harder-Narasimhan filtration of $L\otimes_k W$.
\end{prop}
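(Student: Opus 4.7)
The plan is to establish part (1) by a direct comparison of slopes, then to prove a strengthening of part (2) which simultaneously yields the Harder–Narasimhan claim. The main work is concentrated in part (2).

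Part (1) is essentially immediate: for any $k$-subrepresentation $W'\subset W$, the $L$-representation $L\otimes_k W'$ sits inside $L\otimes_k W$ as an $L$-subrepresentation with the same dimension vector, and hence the same slope. Applying the $L$-(semi)stability inequality for $L\otimes_k W$ to subrepresentations of this particular form yields $k$-(semi)stability of $W$; strictness in the stable case is clear because $W'\subsetneq W$ forces $L\otimes_k W'\subsetneq L\otimes_k W$.

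For part (2), I would prove the stronger statement that the scss subrepresentation $U\subset L\otimes_k W$ (when it exists) is of the form $L\otimes_k U_0$ for a unique $k$-subrepresentation $U_0\subset W$. Granting this, $\theta$-semistability of $W$ combined with $\mu_\theta(U)=\mu_\theta(U_0)$ forces $\mu_\theta(U_0)\le \mu_\theta(W)$, so $L\otimes_k W$ is itself $\theta$-semistable, proving (2). Iterating on $W/U_0$ and using uniqueness of the scss at each step gives the final statement: the HN filtration of $L\otimes_k W$ is obtained by extension of scalars from that of $W$.

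To prove that $U$ descends, I would first reduce to $L/k$ finitely generated (the finitely many $L$-linear maps defining $U$ involve only finitely many elements of $L$, and scss-type subrepresentations detected over a finitely generated subextension descend along the further extension). I would then handle the remaining cases in turn. \emph{Finite Galois extensions.} If $L/k$ is finite Galois with group $\Gamma$, then $\Gamma$ acts on $L\otimes_k W$ compatibly with the quiver structure, and by the uniqueness of the scss (\cite[Lemma 4.4]{Reineke_lectures}) the subrepresentation $U$ is $\Gamma$-stable; faithfully flat Galois descent applied vertex by vertex produces the required $U_0\subset W$. \emph{Purely transcendental extensions.} If $L=k(t_1,\dots,t_n)$, clear denominators so that $U$ is defined by matrices over $k[t_1,\dots,t_n]_f$ for some $f\neq 0$, and then specialise the $t_i$ at a point of the open set $\{f\neq 0\}$ (enlarging $k$ by a finite separable extension first if $k$ is finite, and applying part (1) and the Galois case to descend) to obtain $U_0\subset W$ with the same slope as $U$. \emph{General case.} Decompose $L/k$ as a tower of a finite separable extension followed by a purely transcendental extension (plus, if $k$ is imperfect, a purely inseparable step handled by Frobenius twist), and combine the above.

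The main obstacle is assembling these steps cleanly, in particular handling the non-Galois and (if $k$ is imperfect) the purely inseparable pieces without losing the scss's defining property under descent; the uniqueness of the scss is what makes all of this work, since it lets Galois-type arguments apply to extensions that are not a priori Galois. For the applications later in the paper only perfect $k$ is needed, so in practice only the Galois and purely transcendental cases enter.
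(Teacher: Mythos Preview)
Your overall architecture matches the paper's: part (1) is the same direct argument, and for part (2) both you and the paper reduce to finitely generated extensions, break the extension into a tower, and use the uniqueness of the \textit{scss} to force descent in each step.

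The main substantive difference is the purely transcendental step. The paper does not specialise; for $L=k(X)$ with $k$ infinite it observes that the fixed field of $k(X)$ under $\Aut(k(X)/k)\simeq\PGL(2;k)$ is $k$, so the \textit{scss} $U\subset k(X)\otimes_k W$, being canonical, is $\Aut(k(X)/k)$-invariant and hence defined over $k$ by the same mechanism as in the Galois case; for $k$ finite it passes to $\kb$ first. Your specialisation argument is a legitimate alternative, but note that as stated it only produces a $k$-subrepresentation $U_0\subset W$ with $\mu_\theta(U_0)=\mu_\theta(U)$; it does not directly give $L\otimes_k U_0=U$. This is enough to prove (2) by contraposition, and then the HN statement (and hence the stronger claim that the \textit{scss} descends) follows formally from (2) and uniqueness of HN, so the circularity is only apparent --- but you should reorganise the logic accordingly.

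The genuine gap is the purely inseparable case. ``Handled by Frobenius twist'' is not an argument, and I do not see how to make it one: Frobenius pullback changes the $k$-structure rather than descending $L$-subspaces to $k$. The paper's method here is Jacobson descent: for $L/k$ finite purely inseparable, an $L$-subspace of $L\otimes_k W$ descends to $k$ if and only if it is stable under the algebra $\mathrm{Der}_k(L)$ acting via $\delta\otimes\mathrm{Id}_W$. For the \textit{scss} $U$, each $\delta\in\mathrm{Der}_k(L)$ induces an $L$-linear map $\overline{\psi_\delta}:U\to (L\otimes_k W)/U$ (the Leibniz rule makes the a priori only additive map become $L$-linear after passing to the quotient), and this map vanishes because $\Hom(U,(L\otimes_k W)/U)=0$ for the \textit{scss}. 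Hence $U$ is $\mathrm{Der}_k(L)$-stable and descends. You should replace the Frobenius remark with this argument; without it the proof is incomplete for imperfect $k$, and the proposition is stated for arbitrary $k$.
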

 
 \begin{proof}
Let us suppose that $L\otimes_k W$ is $\theta$-semistable as an $L$-representation. Then, given a $k$-subrepresentation $W'\subset W$, we have $$\mu_\theta^k(W') = \mu^L_\theta(L\otimes_k W') \leq \mu^L_\theta(L\otimes_k W) = \mu_\theta^k(W).$$ Thus, $W$ is necessarily $\theta$-semistable as a $k$-representation; this proof shows that (1) also holds for stability.

First we can reduce (2) to finitely generated extensions $L/k$ as follows. Let $W^L$ be an $L$-subrepresentation of $L\otimes_k W$. For each $v\in V$, choose an $L$-basis $(b^v_j)_{1\leq j\leq d_v}$ of $W^L_v$ and write $b^v_j = \sum_i a^v_{ij} e^v_{ij}$ (a finite sum with $a^v_{ij}\in L$ and $e^v_{ij}\in W_v$). Let $L'$ be the subfield of $L$ generated by the $a^v_{ij}$. The $(e^v_{ij})$ generate an $L'$-subrepresentation $W^{L'}$ of $L' \otimes_k W$ that satisfies $L\otimes_{L'} W^{L'} = W^{L'}$. If $L\otimes_k W$ is not semistable, there exists $W^L\subset L\otimes_k W$ such that $\mu_\theta^L(W^L) > \mu_\theta^L(L\otimes_k W)=\mu_\theta(W)$, then $L'\otimes_k W$ is not semistable, as $\mu^{L'}(W^{L'})>\mu_\theta^{L'}(L'\otimes_k W)$.

By filtering $L/k$ by various subfields, and taking unions into account as well, it suffices to verify the following cases:
\begin{enumerate}
\renewcommand{\labelenumi}{(\roman{enumi})}
\item $L/k$ is a Galois extension;
\item $L/k$ is a separable algebraic extension;
\item $L/k$ is a purely inseparable finite extension;
\item $L/k$ is a purely transcendental extension, of transcendence degree $1$.
\end{enumerate}

For (i), we prove the statement by contrapositive, using the existence and uniqueness of the \textit{scss} $U \subsetneq L\otimes_k W$ with $\mu^L_\theta(U)>\mu^L_\theta(L\otimes_k W)$. 
For $\tau\in\Aut(L/k)$, we construct an $L$-subrepresentation $\tau(U)$ of $L \otimes_k W$ of the same dimension vector and slope as $U$ as follows. For each $v\in V$, the $k$-automorphism $\tau$ of $L$ induces an $L$-semilinear transformation of $L\otimes_k W_v$ (i.e., an additive map satisfying $\tau(zw) = \tau(z)\tau(w)$ for all $z\in L$ and all $w\in W_v$), which implies that $\tau(U_v)$ is an $L$-vector subspace of $L\otimes_k W_v$, and the map $\tau\pphi_a\tau^{-1}: \tau(U_{t(a)}) \lra \tau(U_{h(a)})$ is $L$-linear. 
By uniqueness of the \textit{scss} subrepresentation $U$, we must have $\tau(U)=U$ for all $\tau\in \Aut(L/k)$. Moreover, for all $v\in V$, the $k$-vector space $U_v^{\Aut(L/k)}$ is a subspace of $(L\otimes_k W_v)^{\Aut(L/k)}=W_v$, as $L/k$ is Galois. Then $U^{\Aut(L/k)}\subset W$ is a  $k$-subrepresentation with $\mu_\theta(U^{\Aut(L/k)}) = \mu^L_\theta(U)$; thus $W$ is not semistable.

For (ii), we pick a Galois extension $N$ of $k$ containing $L$; then we conclude the claim using (i) and (1).

For (iii), by Jacobson descent, an $L$-subrepresentation $W^L\subset L\otimes_k W$ descends to a $k$-subrepresentation of $W$ if and only if $W^L$ is invariant under the algebra of $k$-derivations of $L$, which is the case for the \textit{scss} $L$-subrepresentation $U\subset L\otimes_k W$. Indeed, let us consider a derivation $\delta\in \mathrm{Der}_k(L)$ and, for all $v\in V$, the induced transformation $(\psi_{\delta})_v:= (\delta\otimes_k \mathrm{Id}_{W_v}): L\otimes_k W_v \lra L\otimes_k W_v$. Then, for all $v\in V$, all $\lambda\in L$ and all $u\in U_v$, one has $(\psi_\delta)_v(\lambda u) = \delta(\lambda)u + \lambda\psi_\delta(u)$. As the composition $$\ov{(\psi_\delta)_v}: U_v \hookrightarrow L\otimes_k W_v \overset{(\psi_\delta)_v}{\lra} L\otimes_k W_v \lra (L\otimes_k W_v)/U_v$$ is $L$-linear, we obtain a morphism of $L$-representations $\ov{\psi_\delta}:U\lra (L\otimes_k W)/U$, which must be zero as $U$ is the \textit{scss} subrepresentation of $L\otimes_k W$. As $U$ is invariant under $\psi_{\delta}$, it descends to a $k$-subrepresentation of $W$; then we argue as in (i). 

For (iv), we distinguish two cases. If $k$ is infinite, the fixed subfield of $k(X)$ for the action of $\Aut(k(X)/k) \simeq \mathbf{PGL}(2;k)$ is $k$ (\textit{cf.}\  \cite[p.\ 254]{Robert}), so we can argue as in (i). If $k$ is finite, the fixed subfield of $k(X)$ for the action of $\Aut(k(X)/k)$ is strictly larger than $k$, as $\mathbf{PGL}(2;k)$ is finite. Let then $\kb$ be an algebraic closure of $k$. If $W$ is semistable, so is $\kb\otimes_k W$ in view of the above, since $\kb/k$ is algebraic. As $\kb$ is infinite, $\kb(X) \otimes_{\kb} (\kb\otimes_k W) =  \kb(X)\otimes_k W$ is also semistable. Since  $\kb(X)\otimes_k W = \kb(X) \otimes_{k(X)} (k(X)\otimes_k W)$, we can conclude that $k(X)\otimes_k W$ is a semi\-stable $k(X)$-representation by Part (1).
 \end{proof}
 
\begin{rmk}\label{stability_and_field_ext} Part (2) of Proposition \ref{sst_and_field_ext} is not true if we replace semistability by stability, as is evident if we set $k=\R$ and $L=\C$: for a $\theta$-stable $\R$-representation $W$, its complexification $\C\otimes W$ is a $\theta$-semistable $\C$-representation by Proposition \ref{sst_and_field_ext} and either, for all $\C$-subrepresentations $U\subset \C\otimes W$, one has $\mu^{\C}_\theta(U) < \mu^\C_\theta(\C\otimes W)$, in which case $\C\otimes W$ is actually $\theta$-stable as a $\C$-representation; or there exists a $\C$-subrepresentation $U\subset L\otimes W$ such that $\mu^\C_\theta(U)=\mu^\C_\theta(\C\otimes W)$. In the second case, let $\tau(U)$ be the $\C$-subrepresentation of $\C\otimes W$ obtained by applying the non-trivial element of $\Aut(\C/\R)$ to $U$. Note that $\tau(U)\neq U$, as otherwise it would contradict the $\theta$-stability of $W$ as an $\R$-representation (as in the proof of Part (2) of Proposition \ref{sst_and_field_ext}). It is then not difficult, adapting the arguments of \cite{Ramanan_hyperelliptic,Sch_JSG}, to show that $U$ is a $\theta$-stable $\C$-representation and that $\C\otimes W \simeq U\oplus \tau(U)$; thus $\C\otimes W$ is only $\theta$-polystable as a $\CC$-representation.
\end{rmk}

This observation motivates the following definition.

\begin{defn}\label{geom_stability_def}
A $k$-representation $W$ is $\theta$\textit{-geometrically stable} if $L\otimes_k W$ is $\theta$-stable as an $L$-representation for all extensions $L/k$.
\end{defn}

\noindent Evidently, the notion of geometric stability is invariant under field extension. In fact, if $k=\kb$, then being geometrically stable is the same as being stable: this can be proved directly, as in \cite[Corollary 1.5.11]{Huybrechts_Lehn}, or as a consequence of Proposition \ref{GIT_charac_of_sst_and_geom_st} below. This implies that a $k$-representation $W$ is $\theta$-geometrically stable if and only if $\kb\otimes_k W$ is $\theta$-stable (the proof is the same as in Part (2) - Case (iv) of Proposition \ref{sst_and_field_ext}).

\subsection{Families of quiver representations}\label{moduli_functors_section}

A family of $k$-representations of $Q$ parametrised by a $k$-scheme $B$ is a representation of $Q$ in the category of vector bundles over $B$, denoted $\cE = ((\cE_v)_{v \in V}, (\varphi_a)_{a \in A}) \lra B$. For  $d=(d_v)_{v\in V} \in \NN^V$, we say a family $\cE\lra B$ is $d$-dimensional if, for all $v\in V$, the rank of $\cE_v$ is $d_v$. For a morphism of $k$-schemes  $f:B'\lra B$, there is a pullback family $f ^*\cE:= (f^*\cE_v)_{v\in V}$ over $B'$. For $b\in B$ with residue field $\kappa(b)$, we let $\cE_b$ denote the $\kappa(b)$-representation obtained by pulling back $\cE$ along $u_b:\spec \kappa(b) \lra B$.

\begin{defn}\label{sst_in_families}
A family $\cE \lra B$ of $k$-representations of $Q$ is called:
\begin{enumerate}
\item $\theta$\textit{-semistable} if, for all $b\in B$, the $\kappa(b)$-representation $\cE_{b}$ is $\theta$-semistable.
\item $\theta$\textit{-geometrically stable} if, for all $b\in B$, the $\kappa(b)$-representation $\cE_{b}$ is $\theta$-geometrically stable.
\end{enumerate} 
\end{defn}

\noindent For a family $\cE \lra B$ of $k$-representations of $Q$, the subset of points $b \in B$  for which $\cE_b$ is $\theta$-semistable (resp.\ $\theta$-geometrically stable) is open; one can prove this by adapting the argument in \cite[Proposition 2.3.1]{Huybrechts_Lehn}. By Proposition \ref{sst_and_field_ext} and Definition \ref{geom_stability_def}, the pullback of a $\theta$-semistable (resp.\ $\theta$-geometrically stable) family is semistable (resp.\ geometrically stable). Therefore, we can introduce the following moduli functors:
\begin{equation}\label{moduli_functors}
F_{Q,d}^{\theta-ss}: (Sch_k)^\op  \lra  \Sets \quad \text{and} \quad F_{Q,d}^{\theta-gs}: (Sch_k)^\op  \lra  \Sets,
\end{equation}
where $(Sch_k)^\op$ denotes the opposite category of the category of $k$-schemes and, for $B \in Sch_k$, we have that $F_{Q,d}^{\theta-ss}(B)$ (resp.\ $F_{Q,d}^{\theta-gs}(B)$) is the set of isomorphism classes of $\theta$-semistable (resp.\ $\theta$-geometrically stable) $d$-dimensional families over $B$ of $k$-representations of $Q$.

We follow the convention in that a scheme $\mathcal{M}$ is a \textit{coarse moduli space} for the moduli functor $F : (Sch_k)^\op \lra \Sets$ is $F$ if it comes equipped with a universal natural transformation $F \lra \Hom(-,\mathcal{M})$ inducing bijections $\mathcal{M}(\Omega) \simeq F(\Omega)$ for all algebraically closed fields $\Omega$. When referring to the first condition only, it will sometimes be convenient to say that $\mathcal{M}$ \textit{corepresents} the functor $F$.

\subsection{The GIT construction of the moduli space}\label{GIT_constr_section}

Fix a field $k$ and dimension vector $d=(d_v)_{v\in V}\in \N^V$; then every $d$-dimensional $k$-representation of $Q$ is isomorphic to a point of the following affine space over $k$
\[ \Rep := \prod_{a\in A} \Mat_{d_{h(a)}\times d_{t(a)}}.\]
The reductive group $\G:=\prod_{v\in V} \GL_{d_v}$ over $k$ acts algebraically on $\Rep$ by conjugation: for $g = (g_v)_{v \in V} \in \G$ and $M = (M_a)_{a \in A} \in \Rep$, we have
 \begin{equation}\label{action_of_G_on_Rep}
g\cdot M := (g_{h(a)} M_a g_{t(a)}^{-1})_{a\in A}.
\end{equation}

There is a tautological family $\cF \lra \Rep$ of $d$-dimensional $k$-representations of $Q$, where $\cF_v$ is the trivial rank $d_v$ vector bundle on $\Rep$. 

\begin{lemma}\label{local_univ_family}
The tautological family $\cF \lra \Rep$ has the local universal property; that is, for every family $\cE=((\cE_v)_{v \in V}, (\varphi_a)_{a \in A}) \lra B$ of representations of $Q$ over a $k$-scheme $B$, there is an open covering $B = \cup_{i \in I} B_i$ and morphisms $f_i : B_i \lra \Rep$ such that $\cE|_{B_i} \cong f_i^*\cF$.
\end{lemma}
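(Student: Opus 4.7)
The plan is to exploit the fact that each $\cE_v$ is a locally free $\cO_B$-module of rank $d_v$ and that the vertex set $V$ is finite. The local universal property should fall out of the definition of $\cF$ once one chooses simultaneous trivialisations of all the $\cE_v$.

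First I would fix a family $\cE = ((\cE_v)_{v \in V}, (\varphi_a)_{a \in A}) \lra B$ of $d$-dimensional representations of $Q$. For each $v \in V$ separately, the sheaf $\cE_v$ admits an open cover of $B$ on which it trivialises as $\cO_{B}^{d_v}$. Since $V$ is finite, I can refine these covers to a single open cover $B = \bigcup_{i \in I} B_i$ on which all of the $\cE_v$ trivialise simultaneously; fix isomorphisms $\psi_{v,i} : \cE_v|_{B_i} \xrightarrow{\sim} \cO_{B_i}^{d_v}$ for each $v \in V$.

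Next, for each arrow $a \in A$, the composition
\[
\cO_{B_i}^{d_{t(a)}} \xrightarrow{\psi_{t(a),i}^{-1}} \cE_{t(a)}|_{B_i} \xrightarrow{\varphi_a|_{B_i}} \cE_{h(a)}|_{B_i} \xrightarrow{\psi_{h(a),i}} \cO_{B_i}^{d_{h(a)}}
\]
is an $\cO_{B_i}$-linear map between free modules, hence corresponds to a matrix $M_{a,i} \in \Mat_{d_{h(a)} \times d_{t(a)}}(\Gamma(B_i,\cO_{B_i}))$, or equivalently a morphism $B_i \lra \Mat_{d_{h(a)}\times d_{t(a)}}$ of $k$-schemes. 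Assembling these over all $a \in A$ gives the desired morphism
\[
f_i : B_i \lra \Rep = \prod_{a \in A} \Mat_{d_{h(a)} \times d_{t(a)}}.
\]

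Finally, I would check that $\cE|_{B_i} \cong f_i^* \cF$. By construction of $\cF$, the vector bundle $\cF_v$ is the trivial bundle of rank $d_v$ on $\Rep$, so $f_i^* \cF_v = \cO_{B_i}^{d_v}$ is canonically isomorphic to $\cE_v|_{B_i}$ via $\psi_{v,i}^{-1}$; and the universal arrow maps on $\Rep$ are, tautologically, the matrix-entry functions, which pull back under $f_i$ precisely to the $M_{a,i}$, i.e.\ to $\psi_{h(a),i}\circ \varphi_a|_{B_i} \circ \psi_{t(a),i}^{-1}$. Thus the collection $(\psi_{v,i})_{v \in V}$ defines an isomorphism of families $\cE|_{B_i} \cong f_i^* \cF$. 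There is no real obstacle here: the argument is a direct unwinding of definitions, and the only mild subtlety—ensuring that the trivialisations can be chosen simultaneously for all vertices—is immediate from finiteness of $V$.
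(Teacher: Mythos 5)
Your argument is correct and is exactly the approach the paper takes, merely spelled out in more detail: the paper's proof likewise takes a common trivialising cover for the finitely many bundles $\cE_v$ and reads off the morphisms $f_i$ from the matrices of the $\varphi_a$ in those trivialisations.
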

\begin{proof}
Take an open cover of $B$ on which all the (finitely many) vector bundles $\cE_v$ are trivialisable, then the morphisms $f_i$ are determined by the morphisms $\varphi_a$.
\end{proof}

We will construct a quotient of the $\G$-action on $\Rep$ via geometric invariant theory (GIT) using a linearisation of the action by a stability parameter $\theta=(\theta_v)_{v\in V}\in \Z^V$. Let us set $\theta':=(\theta'_v)_{v\in V}$ where $\theta'_v := \theta_v \sum_{\alpha\in V} d_\alpha - \sum_{\alpha\in V} \theta_\alpha d_\alpha$  for all $v\in V$; then one can easily check that $\theta'$-(semi)stability is equivalent to $\theta$-(semi)stability. We define a character $\chi_\theta: \G \lra \GG_m$ by
\begin{equation}\label{the_character}
\chi_{\theta}( (g_v)_{v \in V} ):=\prod_{v\in V} (\det g_v)^{-\theta'_v}.
\end{equation} Any such character $\chi: \G \lra \Gm$ defines a lifting of the $\G$-action on $\Rep$ to the trivial line bundle $\Rep\times \A^1$, where $\G$ acts on $\AA^1$ via multiplication by $\chi$. As the subgroup $\Delta \subset \G$, whose set of $R$-points (for $R$ a $k$-algebra) is \begin{equation}\label{Delta_subgp}
\Delta(R) := \{ (t I_{d_v})_{v\in V} : t\in R^\times\} \cong \GG_m(R),
\end{equation} acts trivially on $\Rep$, invariant sections only exist if $\chi^{(R)}(\Delta(R)) = \{1_{R^\times}\}$ for all $R$; this holds for $\chi_\theta$, as $\sum_{v\in V} \theta'_v d_v =0$. Let $\cL_\theta$ denote the line bundle $\Rep\times\A^1$ endowed with the $\G$-action induced by $\chi_\theta$ and by $\cL_\theta^n$ its $n$-th tensor power for $n\geq 1$ (endowed with the action of $\chi_\theta^n$). The invariant sections of $\cL_\theta^n$ are $\chi_\theta$-semi-invariant functions; that is, morphisms $f:\Rep\lra\A^1$ satisfying  $f(g\cdot M) = \chi_\theta(g)^n\, f(M)$ for all $g\in\G$ and all $M\in\Rep$.
\begin{defn}
A point $M\in \Rep$ is called:
\begin{enumerate}
\item $\chi_\theta$\textit{-semistable} if there exists an integer $n>0$ and a $\G$-invariant section $f$ of $\cL_\theta^n$ such that $f(M)\neq 0$.
\item $\chi_\theta$\textit{-stable} if there exists an integer $n>0 $ and a $\G$-invariant section $f$ of $\cL_\theta^n$ such that $f(M)\neq 0$, the action of $\G$ on $(\Rep)_f$ is closed and $\dim_{\kappa(M)} (\Stab(M)/\Delta_{\kappa(M)}) = 0$, where $\Stab(M) \subset \mathbf{G}_{Q,d,\kappa(M)}$ is the stabiliser group scheme of $M$.
\end{enumerate}
We denote the set of $\chi_\theta$-(semi)stable points in $\Rep$ by $\Rep^{\chi_\theta-(s)s}$.
\end{defn} 
\noindent Evidently, $\Rep^{\chi_\theta-ss}$ and $\Rep^{\chi_\theta-s}$ are $\G$-invariant open subsets. Moreover, these subsets commute with base change (\textit{cf.}\ \cite[Proposition 1.14]{Mumford} and \cite[Lemma 2]{Seshadri_GIT}). Mumford’s GIT (or, more precisely, Seshadri’s extension of GIT \cite{Seshadri_GIT}) provides a categorical and good quotient of the $\G$-action on $\Rep^{\chi_\theta-ss}$
\[ \pi : \Rep^{\chi_\theta-ss} \lra \Rep/\!/_{\chi_\theta} \G:= \proj \bigoplus_{n \geq 0} H^0(\Rep, \cL_\theta^{n})^{\G}, \]
which restricts to a geometric quotient $\pi|_{\Rep^{\chi_\theta-s}} : \Rep^{\chi_\theta-s} \lra \Rep^{\chi_\theta-s}/\G$.

Given a geometric point $M : \spec \Omega \lra \Rep$, let us denote by $\Lambda(M)$ the set of $1$-parameter subgroups $\lambda:\mathbb{G}_{m,\Omega} \lra \mathbf{G}_{Q,d,\Omega}$ such that the morphism $\GG_{m,\Omega} \lra \rep_{Q,d,\Omega}$, given by the $\lambda$-action on $M$, extends to $\AA^1_\Omega$. As $\Rep$ is separated, if this morphism extends, its extension is unique. If $M_0$ denotes the image of $0 \in \AA^1_\Omega$, the weight of the induced action of $\mathbb{G}_{m,\Omega}$ on $\cL_{\theta,\Omega}|_{M_0}$ is $(\chi_{\theta,\Omega},\lambda)\in \Z$, where $(-,-)$ denotes the natural pairing of characters and 1-parameter subgroups.

\begin{prop}[Hilbert-Mumford criterion \cite{king}]\label{HM_criterion}
For a geometric point $M: \spec \Omega \ra \Rep$, we have
\begin{enumerate}
\item $M$ is $\chi_\theta$-semistable if and only if  $(\chi_{\theta,\Omega},\lambda) \geq 0$ for all $\lambda\in\Lambda(M)$;
\item $M$ is $\chi_\theta$-stable if and only if $(\chi_{\theta,\Omega},\lambda) \geq 0$ for all $\lambda\in\Lambda(M)$, and $(\chi_{\theta,\Omega},\lambda)= 0$ implies $\im\lambda \subset \Stab(M)$, where $\Stab(M) \subset \mathbf{G}_{Q,d,\Omega}$ is the stabiliser group scheme of $M$.
\end{enumerate}
\end{prop}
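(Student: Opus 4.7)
The plan is to reduce to the statement over an algebraically closed field, where it is exactly King's Hilbert--Mumford criterion \cite{king}. First, I would verify that all ingredients in parts (1) and (2) are compatible with base extension from $\Omega$ to an algebraic closure $\Omega'$. The $\chi_\theta$-(semi)stable loci in $\Rep$ commute with base change, as recalled above. The stabiliser group scheme $\Stab(M) \subset \mathbf{G}_{Q,d,\Omega}$ and the condition $\im\lambda \subset \Stab(M)$ are functorial in $\Omega$. The integer pairing $(\chi_{\theta,\Omega},\lambda)$ is invariant under base change. Finally, since $\Rep$ is separated, the existence of the extension of the orbit map $\mathbb{G}_{m,\Omega} \to \rep_{Q,d,\Omega}$ to $\mathbb{A}^1_\Omega$ is a closed condition equivalent to its analogue over $\Omega'$, so $\Lambda(M)$ is preserved as well. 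We may therefore assume $\Omega$ is algebraically closed.

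Over an algebraically closed field, I would appeal to Mumford's classical numerical criterion applied to the linearisation $\cL_\theta$. The key computation is that, for $\lambda \in \Lambda(M)$, the Mumford function $\mu^{\cL_\theta}(M,\lambda)$---defined as the weight of the induced $\mathbb{G}_{m,\Omega}$-action on the fiber $\cL_\theta|_{M_0}$ at the limit $M_0 := \lim_{t \to 0}\lambda(t)\cdot M$---coincides with $(\chi_{\theta,\Omega},\lambda)$, as is immediate from the construction of $\cL_\theta$ as the trivial line bundle with $\G$-action via $\chi_\theta$ (and is already noted in the excerpt). For $\lambda \notin \Lambda(M)$, one sets $\mu^{\cL_\theta}(M,\lambda) = +\infty$, which renders the semistability inequality automatic, so it suffices to test on $\Lambda(M)$; this yields (1). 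For (2), Mumford's stability criterion demands strict positivity for every $1$-PS not lying in the stabiliser; since $\Delta \subset \Stab(M)$ with $\chi_\theta|_\Delta = 1$, $1$-parameter subgroups into $\Delta$ always pair to zero and lie in $\Stab(M)$, matching the convention in the excerpt that finiteness of the stabiliser is measured modulo $\Delta$.

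The main obstacle I foresee is the careful translation between the orbit-theoretic definition of $\chi_\theta$-stability used in the excerpt---existence of an invariant section $f$ with $f(M) \neq 0$, closed $\G$-orbit in $(\Rep)_f$, and $\dim \Stab(M)/\Delta = 0$---and the numerical formulation. This equivalence, including the accounting for the equality case in (2) via the stabiliser condition rather than simply $\im\lambda \subset \Delta$, is exactly what King verifies in \cite{king}.
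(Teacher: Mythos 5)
Your approach is essentially the same as the paper's: reduce to King's result \cite[Proposition 2.5]{king} over an algebraically closed field by invoking the fact that GIT (semi)stability commutes with base change (recalled just before the proposition, from \cite[Proposition 1.14]{Mumford} and \cite[Lemma 2]{Seshadri_GIT}). One small clarification worth making: since $M : \spec\Omega \to \Rep$ is a \emph{geometric} point, $\Omega$ is already algebraically closed, so there is no need to pass to a further algebraic closure $\Omega'$. The base change that actually does the work is from the arbitrary ground field $k$ to $\Omega$ itself: $M$ factors through $\rep_{Q,d,\Omega}$, GIT (semi)stability of $M$ with respect to $(\G,\chi_\theta)$ is equivalent to GIT (semi)stability with respect to $(\mathbf{G}_{Q,d,\Omega},\chi_{\theta,\Omega})$, and over $\Omega$ (algebraically closed) King's Hilbert--Mumford criterion applies directly. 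Your discussion of the compatibility of $\Lambda(M)$, the pairing, and the stabiliser with base change, as well as the identification of the Mumford weight $\mu^{\cL_\theta}(M,\lambda)$ with $(\chi_{\theta,\Omega},\lambda)$, is correct but belongs to King's verification over the algebraically closed field rather than to a separate reduction step; the paper simply delegates this to \cite{king}.
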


\begin{proof}
If $k$ is algebraically closed and $\Omega=k$, this is \cite[Proposition 2.5]{king}; then the above result follows as GIT (semi)stability commutes with base change.
\end{proof}

Before we relate slope (semi)stability and GIT (semi)stability for quiver representations, let $\Rep^{\theta-ss}$ (resp.\ $\Rep^{\theta-gs}$) be the open subset of points in $\Rep$ over which the tautological family $\cF$ is $\theta$-semistable (resp.\ $\theta$-geometrically stable).

\begin{prop}\label{GIT_charac_of_sst_and_geom_st}
For $\theta \in \ZZ^V$, we have the following equalities of $k$-schemes:
\begin{enumerate}
\item $\Rep^{\theta-ss} = \Rep^{\chi_\theta-ss}$;
\item $\Rep^{\theta-gs} = \Rep^{\chi_\theta-s}$.
\end{enumerate}
\end{prop}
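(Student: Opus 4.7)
The plan is to verify both equalities pointwise: since $\Rep^{\theta-ss}$, $\Rep^{\theta-gs}$, $\Rep^{\chi_\theta-ss}$, and $\Rep^{\chi_\theta-s}$ are all open subschemes of $\Rep$, it suffices to show that a scheme-theoretic point $M \in \Rep$ with residue field $\kappa(M)$ belongs to the slope-defined locus if and only if it belongs to the GIT-defined locus. Three tools will be combined: King's theorem \cite{king}, which is precisely the assertion of the proposition when the base field is algebraically closed; the base-change compatibility of $\chi_\theta$-(semi)stability recalled just before Proposition \ref{HM_criterion}; and the base-change behaviour of slope (semi)stability established in Proposition \ref{sst_and_field_ext}.

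For part (1), fix $M \in \Rep$. By Definition \ref{sst_in_families}, $M \in \Rep^{\theta-ss}$ if and only if $\cF_M$ is $\theta$-semistable as a $\kappa(M)$-representation. Applying Proposition \ref{sst_and_field_ext} to the algebraic extension $\overline{\kappa(M)}/\kappa(M)$, this in turn is equivalent to $\overline{\kappa(M)} \otimes_{\kappa(M)} \cF_M$ being $\theta$-semistable as an $\overline{\kappa(M)}$-representation. Over the algebraically closed field $\overline{\kappa(M)}$, King's theorem identifies this condition with the geometric point $M_{\overline{\kappa(M)}}$ being $\chi_\theta$-semistable with respect to the induced action of $\mathbf{G}_{Q,d,\overline{\kappa(M)}}$ on $\rep_{Q,d,\overline{\kappa(M)}}$. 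Finally, since $\chi_\theta$-semistability commutes with base change, this is equivalent to $M \in \Rep^{\chi_\theta-ss}$.

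Part (2) follows by the same chain of equivalences, with the role of Proposition \ref{sst_and_field_ext} replaced by the remark immediately following Definition \ref{geom_stability_def}: $\cF_M$ is $\theta$-geometrically stable as a $\kappa(M)$-representation if and only if $\overline{\kappa(M)} \otimes_{\kappa(M)} \cF_M$ is $\theta$-stable. King's theorem over $\overline{\kappa(M)}$ then identifies this with $\chi_\theta$-stability of $M_{\overline{\kappa(M)}}$, and base-change compatibility of GIT stability closes the loop to give $M \in \Rep^{\chi_\theta-s}$.

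The main point requiring care, and the only real obstacle, is to avoid circular reasoning in part (2): the remark after Definition \ref{geom_stability_def} points out that the equivalence between $\theta$-stability and $\theta$-geometric stability over an algebraically closed field can itself be deduced from this very proposition. To keep the argument self-contained, one should use the direct proof of that equivalence (in the style of \cite[Corollary 1.5.11]{Huybrechts_Lehn}, via Jordan--H\"older theory for $\theta$-semistable representations of a fixed slope), and only then combine it with King's theorem and the base-change compatibilities above. Once this is done, the verification reduces to tracking the various notions through the extension $\overline{\kappa(M)}/\kappa(M)$, which is essentially bookkeeping.
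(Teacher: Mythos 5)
Your proof is correct and follows essentially the same route as the paper's: both reduce to geometric points (you check at every scheme-theoretic point via $\overline{\kappa(M)}/\kappa(M)$, while the paper restricts to $\overline{k}$-points, which suffices since the loci are open), then invoke King's theorem over the algebraically closed field together with the base-change compatibilities of the two notions of (semi)stability. Your remark about potential circularity in part (2) is well taken: the paper's one-line proof implicitly relies on the equivalence of $\theta$-stability and $\theta$-geometric stability over $\overline{k}$, which — as the paper itself notes after Definition \ref{geom_stability_def} — can be established directly in the style of \cite[Corollary 1.5.11]{Huybrechts_Lehn} rather than derived from the proposition; making that choice explicit, as you do, is the right way to keep the argument non-circular.
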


\begin{proof}
Since all of these $k$-subschemes of $\Rep$ are open, it suffices to verify these equalities on $\ov{k}$-points, for which one uses \cite[Proposition 3.1]{king} (we note that we use the opposite inequality to King in our definition of slope (semi)stability, but this is rectified by the minus sign appearing in \eqref{the_character} for the definition of $\chi_\theta$).
\end{proof}

\noindent Proposition \ref{GIT_charac_of_sst_and_geom_st} readily implies the result claimed at the end of $\S$\ref{slope_semistability}, which we state here for future reference.

\begin{cor}\label{charac_of_geom_stability}
A $k$-representation $W$ is $\theta$-geometrically stable if and only if $\kb\otimes_k W$ is $\theta$-stable. In particular, if $k=\kb$, then $\theta$-geometric stability is equivalent to $\theta$-stability. 
\end{cor}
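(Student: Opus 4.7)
The plan is to deduce both assertions directly from Proposition \ref{GIT_charac_of_sst_and_geom_st}, using that the formation of the $\chi_\theta$-stable locus commutes with base change. The strategy is to translate $\theta$-geometric stability of $W$ and $\theta$-stability of $\kb \otimes_k W$ into membership conditions in the $\chi_\theta$-stable locus of $\Rep$ and $\Rep_{\kb}$ respectively, and to match them via base change.

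First, let $M_W \in \Rep(k)$ denote the $k$-point corresponding to $W$, and let $M_{W,\kb} \in \Rep_{\kb}(\kb)$ denote its base change along $\spec \kb \to \spec k$, which corresponds to the $\kb$-representation $\kb \otimes_k W$. By Proposition \ref{GIT_charac_of_sst_and_geom_st}(2) applied over $k$, the condition that $W$ is $\theta$-geometrically stable is equivalent to $M_W$ lying in the open subscheme $\Rep^{\chi_\theta-s} \subset \Rep$. Since $\spec \kb \to \spec k$ is surjective, a $k$-point of $\Rep$ lies in this open subscheme if and only if its base change to $\kb$ lies in $(\Rep^{\chi_\theta-s})_{\kb}$.

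The key step is the identity $(\Rep^{\chi_\theta-s})_{\kb} = \Rep_{\kb}^{\chi_\theta-s}$, which is the base-change compatibility of GIT stability already recorded in $\S$\ref{GIT_constr_section} (\textit{cf.}\ \cite[Proposition 1.14]{Mumford} and \cite[Lemma 2]{Seshadri_GIT}). Applying King's Hilbert--Mumford criterion \cite[Proposition 3.1]{king} directly over the algebraically closed field $\kb$, the $\kb$-point $M_{W,\kb}$ lies in $\Rep_{\kb}^{\chi_\theta-s}$ if and only if $\kb \otimes_k W$ is $\theta$-stable as a $\kb$-representation. Chaining these equivalences gives the first statement, and specialising to $k=\kb$ (so that $\kb \otimes_k W = W$) yields the ``in particular'' assertion.

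The only subtlety is to avoid circularity: one must not invoke the equality of $\theta$-stability and $\theta$-geometric stability over $\kb$ in the course of proving it, since that is precisely the ``in particular'' assertion. This is handled by routing the argument exclusively through $\chi_\theta$-stability, and invoking King's criterion only in its original slope-versus-GIT form over an algebraically closed field, which makes no reference to geometric stability.
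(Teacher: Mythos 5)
Your argument is correct and is essentially the unpacking the paper has in mind when it asserts that Proposition \ref{GIT_charac_of_sst_and_geom_st} ``readily implies'' the corollary: you translate geometric stability of $W$ and $\theta$-stability of $\kb\otimes_k W$ into membership in the $\chi_\theta$-stable locus over $k$ and over $\kb$ respectively, and match them via the base-change compatibility of GIT. Your caveat about circularity is well taken, though the precise locus of the danger is slightly different from where you place it: the risk is not in your use of King's criterion (which, as you say, never mentions geometric stability), but in your reliance on Proposition \ref{GIT_charac_of_sst_and_geom_st}(2), whose proof as written (``verify on $\kb$-points using King'') tacitly needs the $k=\kb$ case of exactly this corollary; the paper sidesteps this by noting at the end of $\S$\ref{slope_semistability} that the $k=\kb$ case can be proved directly as in \cite[Corollary 1.5.11]{Huybrechts_Lehn}, after which Proposition \ref{GIT_charac_of_sst_and_geom_st}(2) and hence your chain of equivalences is safely established.
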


\noindent Finally, we show the existence of coarse moduli spaces of $\theta$-semistable (resp.\ $\theta$-geometrically stable) $k$-representations of $Q$ for an arbitrary field $k$. For an algebraically closed field $k$, this result is proved in \cite[Proposition 5.2]{king}. 

\begin{thm}\label{GIT_const_of_ModQd}
The $k$-variety $\Mod:=\Rep /\!/_{\chi_\theta} \G$ is a coarse moduli space for the functor $\Fss$ and the natural map $\Fss(\kb) \lra \Mod(\kb)$ is surjective. Moreover, $\Modgs := \Rep^{\chi_\theta-s}/\G$ is an open $k$-subvariety of $\Mod$ which is a coarse moduli space for the functor $\Fgs$ and the natural map $\Fgs(\kb) \lra \Modgs(\kb)$ is bijective.
\end{thm}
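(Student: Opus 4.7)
My plan follows the standard pattern used by King over algebraically closed fields, adapted to an arbitrary base field $k$ by invoking Seshadri's GIT together with Proposition \ref{GIT_charac_of_sst_and_geom_st}. Recall from \S\ref{GIT_constr_section} that GIT already supplies a good quotient $\pi : \Rep^{\chi_\theta-ss} \lra \Rep /\!/_{\chi_\theta} \G =: \Mod$, restricting to a geometric quotient on $\Rep^{\chi_\theta-s}$ with open image $\Modgs \subset \Mod$; by Proposition \ref{GIT_charac_of_sst_and_geom_st}, these loci equal $\Rep^{\theta-ss}$ and $\Rep^{\theta-gs}$. So the scheme-theoretic construction is done, and only the corepresentability and $\kb$-point assertions remain.

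To construct the natural transformation $\Fss \lra \Hom(-,\Mod)$, I would take a family $\cE \in \Fss(B)$ and use Lemma \ref{local_univ_family} to obtain an open cover $B = \bigcup_i B_i$ and classifying maps $f_i : B_i \lra \Rep$ with $\cE|_{B_i} \cong f_i^*\cF$. The $\theta$-semistability of $\cE$ in families forces each $f_i$ to land in $\Rep^{\theta-ss}$. On an overlap $B_i \cap B_j$, the two trivialisations of $\cE|_{B_i\cap B_j}$ differ by a morphism $g_{ij} : B_i \cap B_j \lra \G$ satisfying $f_j = g_{ij}\cdot f_i$, and the $\G$-invariance of $\pi$ then gives $\pi\circ f_i = \pi\circ f_j$ on the overlap, so the compositions glue to a morphism $\phi_\cE : B \lra \Mod$. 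Compatibility with pullbacks is immediate. If $\cE$ is geometrically stable, each $f_i$ lands in $\Rep^{\theta-gs}$ and $\phi_\cE$ factors through $\Modgs$, yielding the analogous natural transformation for $\Fgs$.

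For the universal property, given any natural transformation $\Psi : \Fss \lra \Hom(-, N)$, I would apply $\Psi$ to the tautological family $\cF \lra \Rep^{\theta-ss}$ to obtain a morphism $h : \Rep^{\theta-ss} \lra N$. Naturality applied to the two isomorphic families obtained by pulling back $\cF$ along the action map and the second projection $\G \times \Rep^{\theta-ss} \lra \Rep^{\theta-ss}$ shows that $h$ is $\G$-invariant, so it factors uniquely through the categorical quotient $\pi$; an identical argument works for $\Modgs$. The $\kb$-point assertions then follow from standard GIT: every point of $\Mod(\kb)$ is the image under $\pi$ of a closed $\G(\kb)$-orbit in $\Rep^{\theta-ss}(\kb)$, which corresponds to a $\theta$-polystable (hence $\theta$-semistable) $\kb$-representation, giving surjectivity of $\Fss(\kb) \lra \Mod(\kb)$; for the stable case, $\pi$ being a geometric quotient identifies $\Modgs(\kb)$ with $\Rep^{\theta-gs}(\kb)/\G(\kb)$ as a set, and two $\kb$-points of $\Rep$ are $\G(\kb)$-conjugate precisely when the underlying $\kb$-representations of $Q$ are isomorphic, producing the desired bijection.

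The only mildly delicate point is the gluing step: one must verify that the transitions $g_{ij}$ arising from two trivialisations of the vector bundles $\cE_v$ really assemble into a morphism $B_i\cap B_j \lra \G$, which follows from $\G$ being the product of the $\GL_{d_v}$'s and from the $\cE_v$ being locally free sheaves with structure group $\GL_{d_v}$. Everything else reduces to formal properties of categorical and geometric quotients in the sense of Seshadri and to the comparison between slope and GIT (semi)stability already established in Proposition \ref{GIT_charac_of_sst_and_geom_st}.
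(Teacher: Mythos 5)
Your proposal follows essentially the same route as the paper: both use the local universal property of $\cF$ (Lemma \ref{local_univ_family}), the fact that $\G$-orbits classify isomorphism classes of representations, and the universal property of the GIT quotient to establish corepresentability, then use King's identification of GIT orbit equivalence with $S$-equivalence (resp.\ orbits of stable points with isomorphism classes of stable representations) for the $\kb$-point statements; the paper simply packages the equivalence between $\G$-invariant morphisms out of $\Rep^{\theta-ss}$ and natural transformations out of $\Fss$ by citing \cite[Proposition 2.13]{Newstead_TATA}, whereas you unwind it explicitly. The one item you skip is the verification in the paper that $\Mod$ really is a $k$-\emph{variety} (finite type over $k$ by reductivity of $\G$, separated because it is projective over $\spec\cO(\Rep)^{\G}$, and integral since $\Rep^{\chi_\theta-ss}$ is and this passes to the categorical quotient), which is a routine but necessary check given the wording of the statement.
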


\begin{proof}
First, we verify that $\Mod$ is a $k$-variety: it is of finite type over $k$, as the ring of sections of powers of $\cL_\theta$ that are invariant for the reductive group $\G$ is finitely generated. 
Moreover, $\Mod$ is separated, as it is projective over the affine $k$-scheme $\spec \cO(\Rep)^{\G}$. Finally $\Mod$ is integral, as $\Rep^{\chi_\theta-ss}$ is and this property is inherited by the categorical quotient. 

Since the tautological family $\cF^{\theta-ss} \lra \Rep^{\theta-ss}$ has the local universal property by Lemma \ref{local_univ_family} and also the $\G$-action on $\Rep$ is such that  $M,M' \in \Rep$ lie in the same $\G$-orbit if and only if $\cF_M \cong \cF_{M'}$, it follows that any $\G$-invariant morphism $p: \Rep^{\theta-ss} \lra Y$ is equivalent to a natural transformation $\eta_p : \Fss \lra \Hom(-,Y)$ (\textit{cf.}\  \cite[Proposition 2.13]{Newstead_TATA}). 
As $\Rep^{\theta-ss} = \Rep^{\chi_\theta-ss}$ by Proposition \ref{GIT_charac_of_sst_and_geom_st}, and as $\pi : \Rep^{\chi_\theta-ss} \lra \Rep/\!/_{\chi_\theta} \G=\Mod$ is a universal $\G$-invariant morphism, it follows that $\Mod$ corepresents $\Fss$, and similarly $\Modgs$ corepresents $\Fgs$.

The points of $\Mod(\kb)$ are in bijection with equivalence classes of $\G(\kb)$-orbits of $\chith$-semistable $\kb$-points, where $\kb$-points $M_1$ and $ M_2$ are equivalent if their orbit closures intersect in $\Rep^{\chi_\theta-ss}(\kb)$  (\textit{cf.}\ \cite[Theorem 4]{Seshadri_GIT}). By \cite[Proposition 3.2.(ii)]{king}, this is the same as the $S$-equivalence of $\cF_{M_1}$ and $\cF_{M_2}$ as $\theta$-semistable $\kb$-representation of $Q$; hence the surjectivity of the natural map $\Fss(\kb)\lra \Mod(\kb)$. Likewise, $\Mod(\kb)$ is in bijection with the set of $\G(\kb)$-orbits of $\chith$-stable $\kb$-points of $\Rep$, which, by \cite[Proposition 3.1]{king}, is in bijection with the set of $\theta$-stable $d$-dimensional $\kb$-representations of $Q$.
\end{proof}

We end this section with a result that is used repeatedly in Sections \ref{Galois_actions_sect}.

\begin{cor}\label{stabiliser_of_GIT_stable_points}
For $M\in\Reps$, we have $\Stab(M) =\Delta_{\ka(M)} \subset \mathbf{G}_{Q,d,\ka(M)}$.
\end{cor}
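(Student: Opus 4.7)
Since $\Delta$ acts trivially on $\rep_{Q,d}$, we already have the inclusion $\Delta_{\ka(M)} \subseteq \Stab(M)$ inside $\mathbf{G}_{Q,d,\ka(M)}$, so the content of the statement is the reverse inclusion. My plan is to identify $\Stab(M)$ with the unit group scheme of the endomorphism algebra of the $\ka(M)$-representation $\cF_M$ associated to $M$, and then use geometric stability plus Schur's lemma to show that this algebra is as small as possible.

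More precisely, a morphism $T \ra \mathbf{G}_{Q,d,\ka(M)}$ factors through $\Stab(M)$ if and only if the corresponding tuple of invertible matrices defines an endomorphism of the pullback of $\cF_M$ to $T$; hence $\Stab(M)$ is canonically the open subscheme of invertible elements in the finite-dimensional $\ka(M)$-algebra $\End(\cF_M)$, viewed as an affine space over $\ka(M)$. Since formation of endomorphism algebras commutes with flat base change, we have
\[ \End(\cF_M) \otimes_{\ka(M)} \ov{\ka(M)} \;\cong\; \End\bigl(\cF_M \otimes_{\ka(M)} \ov{\ka(M)}\bigr). \]

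By Proposition \ref{GIT_charac_of_sst_and_geom_st}, $M \in \Reps$ corresponds to a $\theta$-geometrically stable $\ka(M)$-representation, so by Corollary \ref{charac_of_geom_stability} (or the definition of geometric stability) the base change $\cF_M \otimes_{\ka(M)} \ov{\ka(M)}$ is $\theta$-stable over the algebraically closed field $\ov{\ka(M)}$. Applying Schur's lemma in the Abelian category of $\theta$-semistable $\ov{\ka(M)}$-representations of fixed slope (in which $\cF_M \otimes \ov{\ka(M)}$ is by definition a simple object), together with the fact that the resulting division algebra over an algebraically closed field is the base field itself, we conclude that
\[ \End\bigl(\cF_M \otimes_{\ka(M)} \ov{\ka(M)}\bigr) = \ov{\ka(M)}. \]
Combining with the displayed base change identity and comparing $\ka(M)$-dimensions forces $\End(\cF_M) = \ka(M)$, so its unit group scheme is $\GG_{m,\ka(M)}$ sitting diagonally inside $\mathbf{G}_{Q,d,\ka(M)}$, namely $\Delta_{\ka(M)}$. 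This yields $\Stab(M) = \Delta_{\ka(M)}$.

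The only non-routine step is ensuring that Schur's lemma is available in this generality, but this is guaranteed by the remarks preceding Definition \ref{scss_def}: the category of $\theta$-semistable representations of fixed slope is Abelian with simples precisely the $\theta$-stable representations, so the usual Schur argument applies verbatim. Everything else is book-keeping with base change and the identification of $\Stab(M)$ with the unit scheme of $\End(\cF_M)$.
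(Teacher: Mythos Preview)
Your proof is correct and follows essentially the same route as the paper: identify $\Stab(M)$ with the automorphism group (unit group scheme of $\End(\cF_M)$), pass to $\ov{\ka(M)}$ where geometric stability becomes stability, apply Schur's lemma over the algebraically closed field to get $\End(\cF_M\otimes\ov{\ka(M)})=\ov{\ka(M)}$, and deduce $\End(\cF_M)=\ka(M)$ by comparing dimensions. The only cosmetic difference is that the paper uses the inclusion $\ov{\ka(M)}\otimes\End(\cF_M)\subset\End(\ov{\ka(M)}\otimes\cF_M)$ rather than your (equally valid) flat-base-change equality, and it does not spell out the scheme-theoretic identification of $\Stab(M)$ with the unit group scheme as carefully as you do.
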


\begin{proof}
$\Stab(M)\subset \mathbf{G}_{Q,d,\ka(M)}$ is isomorphic to $\Aut(\cF_M)$, where $\cF\lra\Rep$ is the tautological family, and $\cF_M$ is $\theta$-geometrically stable. The endomorphism group of a stable $k$-representation of $Q$ is a finite dimensional division algebra over $k$ (\textit{cf.}\ \cite[Proposition 1.2.8]{Huybrechts_Lehn}). Let $\ov{\ka(M)}$ be an algebraic closure of $\ka(M)$; then, as $\ov{\ka(M)}\otimes\cF_M  $ is $\theta$-stable and $\ov{\ka(M)}$ is algebraically closed, $\End(\ov{\ka(M)}\otimes\cF_M  ) = \ov{\ka(M)}$. Since $\ov{\ka(M)}\otimes\End(\cF_M)  \subset \End(\ov{\ka(M)}\otimes\cF_M)$, it follows that $\End(\cF_M) = \ka(M)$ and thus $\Aut(\cF_M)  \simeq \Delta_{\ka(M)}$.
\end{proof}

\section{Rational points of the moduli space}\label{Galois_actions_sect}

Throughout this section, we assume $k$ is a perfect field and we fix an algebraic closure $\ov{k}$ of $k$. For a $k$-scheme $X$, there is a left action of the Galois group $\Gal_k:= \Gal(\ov{k}/k)$ on the set of $\ov{k}$-points $X(\ov{k})$ as follows: for $\tau \in \Gal_k$ and $x : \spec \ov{k} \lra X$, we let $\tau \cdot x:= x \circ \tau^*$, where $\tau^* : \spec \ov{k} \lra  \spec \ov{k}$ is the morphism of $k$-schemes induced by the $k$-algebra homomorphism $\tau:\ov{k}\lra\ov{k}$. As $k$ is perfect, $X(k) = X(\ov{k})^{\Gal_k}$, where the right side denotes the fixed-point set the $\Gal_k$-action on $X(\ov{k})$. If $X_{\ov{k}}=\spec\kb\times_{\spec k} X$, then $X_{\ov{k}}(\ov{k}) = X(\ov{k})$ and $\Gal_k$ acts on $X_{\ov{k}}$ by $k$-scheme automorphisms and, as $k$ is perfect, we can recover $X$ as $X_{\ov{k}}/\Gal_k$.

\subsection{Rational points arising from rational representations}\label{rational_pts_section}

\ The moduli space $\Mod$ constructed in Section \ref{quiver_moduli_sect} is a $k$-variety, so the Galois group $\Gal_k:= \Gal(\ov{k}/k)$ acts on $\Mod(\ov{k})$ as described above and the fixed points of this action are the $k$-rational points. Alternatively, we can describe this action using the presentation of $\Mod$ as the GIT quotient $\Rep/\!/_{\chi_\theta}\G$. The $\Gal_k$-action on $\Rep(\ov{k}) = \prod_{a\in A} \Mat_{d_{h(a)} \times d_{t(a)}}(\ov{k})$ and $\G(\ov{k}) = \prod_{v\in V} \GL_{d_v}(\ov{k})$ is given by applying a $k$-automorphism $\tau\in\Gal_k=\Aut(\ov{k}/k)$ to the entries of the matrices $(M_a)_{a\in A}$ and $(g_v)_{v\in V}$. Both actions are by homeomorphisms in the Zariski topology and the second action is by group automorphisms and preserves the subgroup $\Delta(\ov{k})$ defined in \eqref{Delta_subgp}. We denote these actions as follows 
\begin{equation}\label{Gal_action_on_Rep}
\Phi:   \Gal_k \times \Rep(\ov{k})  \lra \rep(\ov{k}), \quad \big(\tau,(M_a)_{a\in A}\big)  \lmt  \big(\tau(M_a)\big)_{a\in A}
\end{equation}
and 
\begin{equation}\label{Gal_action_on_G}
\Psi: \Gal_k \times \G(\ov{k})  \lra  \G(\ov{k}),\quad \big(\tau,(g_v)_{v\in V}\big)  \lmt  \big(\tau(g_v)\big)_{v\in V}.
\end{equation} They satisfy the following compatibility relation with the action of $\G(\ov{k})$ on $\Rep(\ov{k})$: for all $g\in\G(\ov{k})$, all $M\in\rep(\kb)$ and all $\tau\in\Gal_k$, one has 
\begin{equation}\label{comp_btw_the_actions_Gal_case}
\Phi_\tau(g\cdot M) = \Psi_\tau(g)\cdot \Phi_\tau(M)
\end{equation} (i.e., the $\G(\kb)$-action on $\Rep(\kb)$ extends to an action of $\G(\kb)\rtimes\Gal_k$). For convenience, we will often simply denote $\Phi_\tau(M)$ by $\tau(M)$ and $\Psi_\tau(g)$ by $\tau(g)$.


\begin{prop}\label{Gal_preserves_GIT_sst}
The $\Gal_k$-action on $\Rep(\kb)$ preserves $\Rep^{\chi_\theta-(s)s}(\kb)$. Moreover, if $M_1, M_2$ are two GIT-semistable points whose $\G(\kb)$-orbits closures meet in $\Repss(\kb)$, then, for all $\tau\in\Gal_k$, the same is true for $\tau(M_1)$ and $\tau(M_2)$.
\end{prop}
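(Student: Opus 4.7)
The plan is to leverage the fact that both $\Repss$ and the GIT quotient $\Mod$ are constructed as $k$-schemes, so that the Galois action automatically respects all the relevant structure, and to exploit the semi-invariants calculation already performed in the paragraph immediately preceding the statement.

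For the first assertion, two natural approaches are available. The shortest is to observe that $\Repss \subset \Rep$ is an open subscheme defined over $k$ (since GIT semistability commutes with base change, as recalled just before Proposition \ref{HM_criterion}); its set of $\kb$-points is therefore a $\Gal_k$-stable subset of $\Rep(\kb)$, which is what we want. A more hands-on variant uses the computation preceding the proposition: for each $\tau \in \Gal_k$ and each $\chi_\theta^n$-semi-invariant function $f$, the function $\tau \cdot f = \tau \circ f \circ \Phi_{\tau^{-1}}$ is again a $\chi_\theta^n$-semi-invariant, and directly from the definition one obtains $(\tau \cdot f)(\Phi_\tau(M)) = \tau(f(M))$. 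Since $\tau$ is a field automorphism of $\kb$, the condition $f(M) \neq 0$ implies $(\tau\cdot f)(\Phi_\tau(M)) \neq 0$, so $\Phi_\tau(M)$ is $\chi_\theta$-semistable whenever $M$ is.

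For the second assertion, the natural route is through the GIT quotient morphism $\pi : \Repss \to \Mod$. Being a morphism of $k$-schemes, it is automatically $\Gal_k$-equivariant on $\kb$-points, so $\pi(\Phi_\tau(M)) = \tau(\pi(M))$ for every $M \in \Repss(\kb)$ and $\tau \in \Gal_k$. By the description of the GIT quotient recalled in the proof of Theorem \ref{GIT_const_of_ModQd} (following \cite[Theorem 4]{Seshadri_GIT}), two semistable points $M_1, M_2$ have $\G(\kb)$-orbit closures meeting inside $\Repss(\kb)$ if and only if $\pi(M_1) = \pi(M_2)$ in $\Mod(\kb)$. Combining these two facts gives
\[ \pi(\Phi_\tau(M_1)) = \tau(\pi(M_1)) = \tau(\pi(M_2)) = \pi(\Phi_\tau(M_2)), \]
so the $\G(\kb)$-orbit closures of $\Phi_\tau(M_1)$ and $\Phi_\tau(M_2)$ also meet in $\Repss(\kb)$.

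The proof is essentially formal, so there is no real obstacle; the only care needed is to invoke the correct statement of base-change compatibility for $\Repss$ (to pass from the $k$-structure of the scheme to Galois invariance of its $\kb$-points) and the correct equivalence between orbit-closure intersections and equality in the GIT quotient (to reduce the second claim to the Galois-equivariance of the morphism $\pi$).
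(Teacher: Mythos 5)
Your proof is correct. For the first assertion, your second variant (semi-invariants) is exactly the paper's argument; your first variant (invoking that $\Repss$ is an open $k$-subscheme, so its $\kb$-points are $\Gal_k$-stable under the abstract action, which one identifies with $\Phi$) is also valid and arguably more succinct.

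For the second assertion, your route is genuinely different from the paper's. The paper argues directly on $\Repss(\kb)$: by the compatibility relation $\Phi_\tau(g\cdot M)=\Psi_\tau(g)\cdot\Phi_\tau(M)$, one has $\tau(\G(\kb)\cdot M)=\G(\kb)\cdot\tau(M)$, and since $\tau$ is a homeomorphism of $\Repss(\kb)$ in the Zariski topology it sends orbit closures to orbit closures, so a non-empty intersection is preserved. You instead descend through the GIT quotient: $\pi$ is a morphism of $k$-schemes, hence automatically $\Gal_k$-equivariant on $\kb$-points for the abstract action (which, on $\Rep(\kb)$, coincides with $\Phi$), and then you invoke the standard characterization $\pi(M_1)=\pi(M_2)\iff\overline{\cO_{M_1}}\cap\overline{\cO_{M_2}}\neq\emptyset$. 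Both are sound; your argument is cleaner and more structural, whereas the paper's is self-contained at the level of $\Repss(\kb)$ and avoids appealing to the scheme-theoretic $\Gal_k$-equivariance of $\pi$ (which the authors only explicitly reconcile with the orbit-closure action a few lines after the proposition). Neither argument is circular, and you correctly ground your use of equivariance in the prior identification of the abstract Galois action on $\Rep(\kb)$ with entrywise application of $\tau$.
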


\begin{proof}
The first statement holds, as the $\Gal_k$-action preserves the $\chi_\theta$-semi-invariant functions due to the compatibility relation \eqref{comp_btw_the_actions_Gal_case}, and moreover, for $M\in\Rep(\kb)$ and $\tau\in\Gal_k$, we have $\Stab_{\G(\kb)} \big(\tau(M)\big) = \tau\big(\Stab_{\G(\kb)}(M)\big)$. The second statement follows from  \eqref{comp_btw_the_actions_Gal_case} and the continuity of $\tau$ in the Zariski topology of $\Repss(\kb)$.
\end{proof}

\noindent Proposition \ref{Gal_preserves_GIT_sst} combined with the compatibility relation \eqref{comp_btw_the_actions_Gal_case} readily implies that $\Gal_k$ acts on the set of $\kb$-points of the $k$-varieties $\Mod=\Rep/\!/_{\chi_\theta}\G$ and $\Modgs = \Reps/\G$. Explicitly, the $\Gal_k$-action on the orbit space $\Reps(\kb)/\G(\kb)$ is given by
\begin{equation}\label{induced_Gal_k_action_on_geom_quotient}
(\G(\kb)\cdot M ) \lmt (\G(\kb)\cdot \tau(M)).
\end{equation}

\noindent Since $k$ is assumed to be a perfect field, this $\Gal_k$-action on the $\kb$-varieties $\Mod(\kb)$ and $\Modgs(\kb)$ suffices to recover the $k$-schemes $\Mod$ and $\Modgs$. In particular, the $\Gal_k$-actions just described on $\Mod(\kb)$ and $\Modgs(\kb)$ coincide with the ones described algebraically at the beginning of the present section.

\begin{rmk} We can intrinsically define the Galois action on $\Rep(\kb)$ by defining a $\Gal_k$-action on arbitrary $\kb$-representations of $Q$ as follows. If $W=((W_v)_{v\in V},$ $(\pphi_a)_{a\in A})$ is a $\kb$-representation of $Q$, then, for $\tau\in \Gal_k$, we define $W^\tau$ to be the representation $(W_v^\tau, v\in V; \phi_a^\tau;a\in A)$ where:
\begin{itemize}
\item $W_v^\tau$ is the $\kb$-vector space whose underlying Abelian group coincides with that of $W_v$ and whose external multiplication is given by $\lambda\cdot_{\tau} w := \tau^{-1}(\lambda)w$ for $\lambda\in \kb$ and $w\in W_v$. 
\item The map $\phi_a^\tau$ coincides with $\phi_a$, which is $\kb$-linear for the new $\kb$-vector space structures, as $\phi_a^\tau(\lambda\cdot_\tau w)= \phi_a(\tau^{-1}(\lambda)w) = \tau^{-1}(\lambda)\phi_a(w) = \lambda\cdot_\tau \phi_a^\tau(w)$.
\end{itemize}
If $\rho:W'\lra W$ is a morphism of $\kb$-representations and $\tau\in\Gal_k$, we denote by $\rho^\tau:(W')^\tau \lra W^\tau$ the induced homomorphism (which set-theoretically coincides with $\rho$). With these conventions, we have a right action, as $W^{\tau_1\tau_2}=(W^{\tau_1})^{\tau_2}$. Moreover, if we fix a $\kb$-basis of each $W_v$, the matrix of $\phi_a^\tau$ is $\tau(M_a)$, where $M_a$ is the matrix of $\phi_a$, so we recover the $\Gal_k$-action \eqref{Gal_action_on_Rep}. We note that the construction $W\lmt W^\tau$ is compatible with semistability and $S$-equivalence, thus showing in an intrinsic manner that $\Gal_k$ acts on the set of $S$-equivalence classes of semistable $d$-dimensional representations of $Q$.
\end{rmk}

By definition of the moduli spaces $\Mod$ and $\Modgs$, we have natural maps 
\begin{equation}\label{nat_maps_on_k_points}
\Fss(k) \lra \Mod(k) \quad \mathrm{and} \quad \Fgs(k) \lra \Modgs(k),
\end{equation} 
where $\Fss$ and $\Fgs$ are the moduli functors defined at \eqref{moduli_functors}. 
As $k$ is perfect, $\Mod(k) = \Mod(\kb)^{\Gal_k}$ and $\Modgs(k) = \Modgs(\kb)^{\Gal_k}.$ 
The goal of the present section is to use this basic fact in order to understand the natural maps \eqref{nat_maps_on_k_points}. As a matter of fact, our techniques will only apply to $\Fgs(k)\lra \Modgs(k)$, because $\Modgs(\kb)$ is the orbit space $\Reps(\kb) / \G(\kb)$ and all GIT-stable points in $\Rep(\kb)$ have the Abelian group $\Delta(\kb)\simeq\kb^\times$ as their stabiliser for the $\G(\kb)$-action.

Note first that, by definition of the functor $\Fgs$, we have $$\Fgs(k) \simeq \Reps(k) / \G(k),$$ so the natural map $\Fgs(k) \lra \Modgs(k)$ may be viewed as the map 
\begin{equation*}\label{def_of_fGalk}
\fGalk : \Reps(k) / \G(k)  \lra  \big(\Reps(\kb)/\G(\kb)\big)^{\Gal_k},\quad
\G(k)\cdot M  \lmt  \G(\kb)\cdot (\kb \otimes_k M).
\end{equation*} 

\begin{prop}\label{fibres_of_fGalk}
The natural map $\Fgs(k) \lra \Modgs(k)$ is injective.
\end{prop}

\begin{proof} To prove this result, we identify this map with $\fGalk$ and we will show that the non-empty fibres of $\fGalk$ are in bijection with the pointed set 
$$\ker \big(H^1(\Gal_k;\Delta(\kb)) \lra H^1(\Gal_k;\G(\kb))\big)$$ 
where this map is induced by the inclusion $\Delta(\kb)\subset\G(\kb)$. Then the result follows from this claim, as $H^1(\Gal_k;\Delta(\kb)) = \{1\}$ by Hilbert's 90th Theorem (for example, see \cite[Proposition X.1.2]{Serre_local_fields}, where technically the statement is only shown for finite Galois extensions, but the general case follows as $H^1(\varprojlim \Gal(L/k);\, \cdot\,) \simeq \varprojlim H^1(\Gal(L/k);\,\cdot\,)$, where the projective limit is taken over finite Galois sub-extensions $L/k$). It remains to prove the above claim about the fibres of $\fGalk$. For this we consider $M_1, M_2$ in $\Reps(\kb)^{\Gal_k}$ such that $\G(\kb)\cdot M_1 = \G(\kb)\cdot M_2$. Then there exists $g\in\G(\kb)$ such that $g\cdot M_2 = M_1$. Therefore, for all $\tau\in\Gal_k$, we have $$g^{-1}\cdot M_1 = M_2 = \tau(M_2) = \tau(g^{-1}\cdot M_1) = \tau(g^{-1}) \cdot \tau(M_1),$$ so $g\tau(g^{-1}) \in \Stab_{\G(\kb)}(M_1) = \Delta(\kb)$. It is straight-forward to check that the map $$\beta_{M_1,M_2}: \Gal_k \lra \Delta(\kb),\quad
\tau  \lmt  g\tau(g^{-1})
$$ is a normalised $\Delta(\kb)$-valued $1$-cocycle whose cohomology class only depends on the $\G(\kb)^{\Gal_k}$-orbits of $M_1$ and $M_2$. Thus the cohomology class $[\beta_{M_1,M_2}]$ lies in the kernel of the pointed map $H^1(\Gal_k;\Delta(\kb)) \lra H^1(\Gal_k;\G(\kb))$. Hence, for $[M_1]:=\G(k) \cdot M_1 \in \Reps(k)/\G(k)$, there is a map
$$ \fGalk^{-1}(\fGalk([M_1])) \lra \ker\big(H^1(\Gal_k;\Delta(\kb)) \lra H^1(\Gal_k,\G(\kb))\big)$$
sending $[M_2]$ to $\beta_{M_1,M_2}$. We claim this map is bijective. To prove surjectivity, suppose we have a 1-cocycle $\gamma(\tau)=g\tau(g^{-1})\in\Delta(\kb)$ that splits over $\G(\kb)$; then $\tau(g^{-1}\cdot M_1) = g^{-1}\cdot M_1$, since $\Delta(\kb)$ acts trivially on $M_1$, so the cocycle $\beta$ defined using $M_1$ and $M_2:= g^{-1} \cdot M_1$ as above is equal to $\gamma$. To prove that the above map is injective, suppose that the $\Delta(\kb)$-valued $1$-cocycle $\beta$ associated to $M_1$ and $M_2 :=g^{-1} \cdot M_1$ splits over $\Delta(\kb)$ (i.e.\ that there exists $a\in \Delta(\kb)$ such that $g\tau(g^{-1}) =a\tau(a^{-1})$ for all $\tau\in\Gal_k$). Then, on the one hand, $a^{-1}g\in \G(\ov{k})^{\Gal_k}$, as $\tau(a^{-1}g) = a^{-1}g$ for all $\tau\in\Gal_k$, and, on the other hand, 
$$(a^{-1}g)^{-1}\cdot M_1= g^{-1}\cdot(a^{-1}\cdot M_1) = g^{-1}\cdot M_1 = M_2,$$ as $\Delta(\kb)$ acts trivially on $\Rep(\kb)$. Therefore, $\G(k) \cdot M_1 = \G(k) \cdot M_2$.
\end{proof}

In order to study the image of the natural map $$\fGalk: \Fgs(k) \lra \Modgs(k)$$ we introduce a map $\cT$ called the type map, from $\Modgs(\kb)^{\Gal_k}$ to the Brauer group of $k$, denoted by $\Br(k)$: 
\begin{equation}\label{type_map_def}
\cT: \Modgs(k) \lra H^2(\Gal_k; \ov{k}^\times) \cong \Br(k),
\end{equation} which is defined as follows. Consider an orbit $$(\G(\kb)\cdot M) \in \Modgs(k) = (\Reps(\kb)/\G(\kb))^{\Gal_k},$$ of which a representative $M$ has been chosen. As this orbit is preserved by the $\Gal_k$-action, we have that, for all $\tau\in\Gal_k$, there is an element $u_\tau\in \G(\kb)$ such that $u_\tau\cdot \tau(M) = M$. Note that for $\tau=1_{\Gal_k}$, we can simply take $u_\tau=1_{\G(\kb)}$, which we will. Since $(\tau_1\tau_2)(M) = \tau_1(\tau_2(M))$, it follows from the compatibility relation \eqref{comp_btw_the_actions_Gal_case} that, $$u_{\tau_1\tau_2}^{-1}\cdot M = \tau_1(u_{\tau_2}^{-1}\cdot M) = \tau_1(u_{\tau_2}^{-1}) \cdot \tau_1(M) = \tau(u_{\tau_2}^{-1}) u_{\tau_1}^{-1}\cdot M.$$ Therefore, for all $(\tau_1,\tau_2)\in\Gal_k\times\Gal_k$, the element $c_u(\tau_1,\tau_2):= u_{\tau_1} \tau_1(u_{\tau_2}) u_{\tau_1\tau_2}^{-1}$ (which depends on the choice of the representative $M$ and the family $u:=(u_\tau)_{\tau\in\Gal_k}$ satisfying, for all $\tau\in\Gal_k$, $u_\tau\cdot\tau(M) = M$) lies in the stabiliser of $M$ in $\G(\kb)$, which is $\Delta(\kb)$ since $M$ is assumed to be $\chi_\theta$-stable.

\begin{prop}\label{type_map}
The above map $$c_u: \Gal_k \times \Gal_k \lra \Delta(\kb),\quad
 (\tau_1,\tau_2) \lmt u_{\tau_1} \tau_1(u_{\tau_2}) u_{\tau_1\tau_2}^{-1}$$ is a normalised $\Delta(\kb)$-valued $2$-cocycle whose cohomology class only depends on the $\G(\kb)$-orbit of $M$, thus this defines a map $$\cT: \Modgs(\kb)^{\Gal_k} \lra H^2(\Gal_k;\Delta(\kb)) \simeq \Br(k)$$ that we shall call the type map.
\end{prop}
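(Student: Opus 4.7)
The plan is to verify two things: first, that the given formula for $c_u$ really does define a normalised $2$-cocycle with values in the abelian group $\Delta(\kb)$; second, that changing either the auxiliary family $(u_\tau)_{\tau \in \Gal_k}$ or the representative $M$ of the orbit alters $c_u$ only by a coboundary. The crucial structural input, already used in the paragraph preceding the statement, is Corollary \ref{stabiliser_of_GIT_stable_points}: since $M \in \Reps(\kb)$, the stabiliser $\Stab_{\G(\kb)}(M)$ equals $\Delta(\kb)$, which is central in $\G(\kb)$. Centrality will let the several $\G(\kb)$-conjugations that appear collapse, so most computations reduce to algebraic rearrangement.

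For the cocycle property, normalisation $c_u(1,\tau) = c_u(\tau,1) = 1$ is immediate from the convention $u_1 = 1_{\G(\kb)}$. For the $2$-cocycle identity in the form $\tau_1(c_u(\tau_2,\tau_3)) \cdot c_u(\tau_1,\tau_2\tau_3) = c_u(\tau_1\tau_2,\tau_3) \cdot c_u(\tau_1,\tau_2)$, I would rewrite each factor using the defining relation $u_{\tau_i}\tau_i(u_{\tau_j}) = c_u(\tau_i,\tau_j)\, u_{\tau_i\tau_j}$, expand $u_{\tau_1}\tau_1(u_{\tau_2})\tau_1\tau_2(u_{\tau_3})$ in two different ways (grouping the first two factors versus the last two), and invoke centrality of the $c_u(-,-)$ inside $\G(\kb)$ to match the two expressions. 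Both expansions compute the same element of $\G(\kb)/\Delta(\kb)$, and the discrepancy is precisely the sought identity.

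For well-definedness up to cohomology, I would split the argument in two. First, for a fixed representative $M$, any other family $(u'_\tau)$ with $u'_\tau \cdot \tau(M) = M$ differs from $(u_\tau)$ by a map $a\colon \Gal_k \to \Delta(\kb)$, $u'_\tau = a_\tau u_\tau$, with $a_1 = 1$; substituting into the formula and using centrality one sees that $c_{u'} = c_u \cdot \delta a$, where $\delta a(\tau_1,\tau_2) = a_{\tau_1}\,\tau_1(a_{\tau_2})\,a_{\tau_1\tau_2}^{-1}$ is a $2$-coboundary. Second, if $M' = g \cdot M$ lies in the same $\G(\kb)$-orbit, then $u'_\tau := g\, u_\tau\, \tau(g)^{-1}$ satisfies $u'_\tau \cdot \tau(M') = M'$; plugging this into the definition of $c_{u'}$ and letting the successive pairs $\tau_1(g)^{-1}\tau_1(g)$ and $\tau_1\tau_2(g)^{-1}\tau_1\tau_2(g)$ cancel, one finds $c_{u'}(\tau_1,\tau_2) = g\, c_u(\tau_1,\tau_2)\, g^{-1}$, which equals $c_u(\tau_1,\tau_2)$ because $\Delta(\kb)$ is central in $\G(\kb)$. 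Hence the cohomology class $[c_u] \in H^2(\Gal_k; \Delta(\kb))$ depends only on the orbit $\G(\kb) \cdot M \in \Modgs(\kb)^{\Gal_k}$, and the identification with $\Br(k)$ comes from the $\Gal_k$-equivariant isomorphism $\Delta(\kb) \cong \kb^\times$ already used in Corollary \ref{injectivity_of_map_to_k_pts}. There is no substantive obstacle; the main care required is to keep careful track of where the centrality of $\Delta(\kb)$ is invoked, since without it neither the cocycle identity nor invariance under change of representative would hold.
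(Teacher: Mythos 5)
Your proposal is correct and follows essentially the same route as the paper: verify the cocycle relation by the standard double expansion of $u_{\tau_1}\tau_1(u_{\tau_2})\tau_1\tau_2(u_{\tau_3})$, then check independence of the choice of family via a $\Delta(\kb)$-valued $1$-cochain $a_\tau = u'_\tau u_\tau^{-1}$ producing the coboundary $\delta a$, and finally check independence of the orbit representative by choosing $u'_\tau = g u_\tau \tau(g)^{-1}$ and exploiting centrality of $\Delta(\kb)$. The only cosmetic difference is that the paper dispatches the cocycle identity as a direct verification while you spell out the two-way expansion; the logical content is identical.
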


\begin{proof}
It is straightforward to check the cocycle relation $$c(\tau_1,\tau_2)c(\tau_1\tau_2,\tau_3) = \tau_1(c(\tau_2,\tau_3)) c(\tau_1,\tau_2\tau_3)$$ for all $\tau_1,\tau_2,\tau_3$ in $\Gal_k$. If we choose a different family $u':=(u'_\tau)_{\tau\in\Gal_k}$ such that $u'_\tau\cdot \tau(M) = M$ for all $\tau\in\Gal_k$, then $(u'_\tau)^{-1}\cdot M=u_\tau\cdot M$, thus $a_\tau:=u'_\tau u_\tau^{-1} \in \Delta(\kb)$ and it is straightforward to check, using that $\Delta(\kb)$ is a central subgroup of $\G(\kb)$, that $$u'_{\tau_1} \tau_1(u'_{\tau_2}) \big(u'_{\tau_1\tau_2}\big)^{-1} = \big(a_{\tau_1} \tau_1(a_{\tau_2}) a_{\tau_1\tau_2}^{-1}\big)\ \big(u_{\tau_1} \tau_1(u_{\tau_2}) u_{\tau_1\tau_2}^{-1}\big).$$ Therefore, the associated cocycles $c_u$ and $c_{u'}$ are cohomologous. If we now replace $M$ with $M'=g\cdot M$ for $g\in\G(\kb)$, then $$\tau(M') = \tau(g)\cdot\tau(M) = \tau(g) u_{\tau}^{-1} g^{-1}\cdot M'$$ and, if we set $u'_\tau:= g u_\tau \tau(g^{-1})$, we have $$c_{u'}(\tau_1,\tau_2) = g c_u(\tau_1,\tau_2) g^{-1} = c_u(\tau_1,\tau_2),$$ where the last equality follows again from the fact that $\Delta(\kb)$ is central in $\G(\kb)$. In particular, the two representatives $M$ and $M'$ give rise, for an appropriate choice of the families $u$ and $u'$, to the same cocycle, and thus they induce the same cohomology class $[c_u] = [c_{u'}]$.
\end{proof}

\noindent If $k$ is a finite field $\mathbb{F}_q$, then $\Br(\mathbb{F}_q)=0$. Other useful examples of target spaces for the type map are $\Br(\R)\simeq \Z/2\Z$ and $\Br(\Q_p)\simeq \Q/\Z$ for all prime $p$. Moreover, the group $\Br(\Q)$ fits in a canonical short exact sequence $$0\lra \Br(\Q) \lra \Br(\R) \oplus \bigoplus_{p\ \mathrm{prime}} \Br(\Q_p) \lra \Q/\Z \lra 0.$$

\begin{rmk}\label{type factors}
We note that the type map $\cT : \Modgs(k) \lra H^2(\Gal_k,\Delta(\kb))$ factors through the connecting homomorphism
\[ \delta : H^1(\Gal_k,\Gbar(\kb)) \lra H^2(\Gal_k,\Delta(\kb))\]
associated to the short exact sequence of groups $$1 \lra \Delta \lra \G \lra \Gbar:=\G/\Delta \lra 1.$$ By definition of $\cT$, for a $\Gal_k$-invariant orbit $\G(\kb) \cdot M$ in $\Modgs(\kb)^{\Gal_k}$, we choose elements $u_\tau \in \G(\kb)$ with $u_1 = 1_{\G}$ such that $ u_\tau \cdot \tau(M) = M$ for all $\tau \in \Gal_k$ and then construct a $\Delta(\kb)$-valued 2-cocycle $c_u(\tau_1,\tau_2) = u_{\tau_1} \tau_1(u_{\tau_2}) u_{\tau_1\tau_2}^{-1}$. If we let $\bar{u}_\tau$ denote the image of $u_\tau$ under the homomorphism $\G(\kb) \lra \Gbar(\kb)$, then $\bar{u} : \Gal_k \lra \Gbar(\kb)$ is a normalised 1-cocycle, as $u_{\tau_1} \tau_1(u_{\tau_2}) u_{\tau_1\tau_2}^{-1} \in \Delta(\kb)$ implies $\bar{u}_{\tau_1 \tau_2} =  \bar{u}_{\tau_1} \tau_1(\bar{u}_{\tau_2})$. Furthermore,
\[ [c_u] = \delta( [\bar{u}]). \]
As $[c_u]$ is independent of the choice of elements $u_\tau$ and representative $M$ of the orbit, and $\delta$ is injective, it follows that $[\bar{u}] \in H^1(\Gal_k,\Gbar(\kb))$ is also independent of these choices. Hence, the type map factors as $\cT = \delta \circ \cT'$ where
\[ \cT' :  \Modgs(k) \lra  H^1(\Gal_k,\Gbar(\kb)).\] This observation will be useful in $\S$\ref{rat_pts_that_do_not_come_from_rat_reps}. Note that, unlike that of $\cT$, the target space of $\cT'$ depends on $Q$.
\end{rmk}

\begin{rmk}[Intrinsic definition of the type map]
The presentation of $\Modgs(\kb)$ as the orbit space $\Reps(\kb)/\G(\kb)$ is particularly well-suited for defining the type map, as the stabiliser in $\G(\kb)$ of a point in $\Reps(\kb)$ is isomorphic to the automorphism group of the associated representation of $Q$. We can intrinsically define the type map, without using this orbit space presentation, as follows. A point in $\Modgs(\kb)$ corresponds to an isomorphism class of a $\theta$-geometrically stable $\kb$-representation $W$, and this point is fixed by $\Gal_k$-action if, for all $\tau\in\Gal_k$, there is an isomorphism $u_\tau: W\lra W^\tau$. The relation $W^{\tau_1\tau_2} = (W^{\tau_1})^{\tau_2}$ then implies that $\tilde{c}_u(\tau_1,\tau_2):=u_{\tau_1\tau_2}^{-1} u_{\tau_1}^{\tau_2} u_{\tau_2}$ is an automorphism of $W$. Once $\Aut(W)$ is identified with $\kb^\times$, this defines a $\kb^\times$-valued $2$-cocycle $\tilde{c}_u$, whose cohomology class is independent of the choice of the isomorphisms $(u_\tau)_{\tau\in\Gal_k}$ and the identification $\Aut(W)\simeq\kb^\times$.
\end{rmk}

\noindent We now use the type map to analyse which $k$-points of the moduli scheme $\Modgs$ actually correspond to $k$-representations of $Q$.

\begin{thm}\label{image_of_k_rep_in_k_pts_of_the_moduli_scheme}
The natural map $\Fgs(k) \lra \Modgs(k)$ induces a bijection $$\Fgs(k) \overset{\simeq}{\lra} \cT^{-1}([1]) \subset \Modgs(k)$$ from the set of isomorphism classes of $\theta$-geometrically stable $d$-dimensional $k$-representations of $Q$ onto the fibre of the type map $\cT:\Modgs(k)\lra \Br(k)$ over the trivial element of the Brauer group of $k$.
\end{thm}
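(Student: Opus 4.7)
The plan is to build on Corollary~\ref{injectivity_of_map_to_k_pts}, which identifies $\Fgs(k) \lra \Modgs(k)$ with the injective map $\fGalk$, and then to identify the image of $\fGalk$ with the subset $\cT^{-1}([1]) \subset \Modgs(k)$. The easy inclusion is that if $M \in \Reps(k) = \Reps(\kb)^{\Gal_k}$ is a rational representative of a $\Gal_k$-fixed orbit, then one may take $u_\tau = 1_{\G(\kb)}$ for every $\tau \in \Gal_k$ in the construction of Proposition~\ref{type_map}, so that $c_u(\tau_1,\tau_2) = 1$ identically and $\cT([M]) = [1]$.

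For the reverse inclusion, I would take an orbit $[M] \in \Modgs(\kb)^{\Gal_k}$ with $\cT([M]) = [1]$ and a family $u = (u_\tau)_{\tau\in\Gal_k}$ in $\G(\kb)$ satisfying $u_\tau\cdot\tau(M) = M$, as in the definition of $\cT$. Triviality of $[c_u] \in H^2(\Gal_k;\Delta(\kb))$ produces a normalised $1$-cochain $a = (a_\tau)_{\tau\in\Gal_k}$ in $\Delta(\kb)$ with
\[ c_u(\tau_1,\tau_2) = a_{\tau_1}\,\tau_1(a_{\tau_2})\,a_{\tau_1\tau_2}^{-1}. \]
The plan is then to set $u'_\tau := a_\tau^{-1} u_\tau$. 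Because $\Delta(\kb)$ acts trivially on $\Rep(\kb)$, one still has $u'_\tau\cdot\tau(M) = M$; and because $\Delta(\kb)$ is central in $\G(\kb)$, a direct computation yields $c_{u'}(\tau_1,\tau_2) \equiv 1$. In other words, $u':\Gal_k \lra \G(\kb)$ is now a genuine $\G(\kb)$-valued $1$-cocycle, i.e.\ $u'_{\tau_1\tau_2} = u'_{\tau_1}\,\tau_1(u'_{\tau_2})$.

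To conclude, I would apply Hilbert's 90 theorem componentwise on $\G = \prod_{v\in V}\GL_{d_v}$: since $H^1(\Gal_k;\GL_n(\kb)) = 1$ for all $n$, the same holds for the product, yielding some $h \in \G(\kb)$ with $u'_\tau = h\,\tau(h^{-1})$. Setting $N := h^{-1}\cdot M$, the identity $\tau(M) = (u'_\tau)^{-1}\cdot M$ together with $\tau(h^{-1}) = h^{-1} u'_\tau$ gives $\tau(N) = N$ for every $\tau \in \Gal_k$; hence $N \in \Reps(k)$ is a rational representative of the orbit $[M]$, which places $[M]$ in the image of $\fGalk$. Combined with the injectivity of $\fGalk$ from Corollary~\ref{injectivity_of_map_to_k_pts}, this yields the desired bijection $\Fgs(k) \simeq \cT^{-1}([1])$.

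The main obstacle is careful bookkeeping rather than anything deep: one must verify that the modification $u_\tau \mapsto a_\tau^{-1}u_\tau$ simultaneously preserves the defining relation $u'_\tau\cdot\tau(M) = M$ and kills the $2$-cocycle $c_u$, and both of these rely solely on the fact that $\Delta$ is central in $\G$ and acts trivially on $\Rep$. The invocation of Hilbert~90 for $\G$ is standard, so no new ingredient beyond Proposition~\ref{type_map} is needed.
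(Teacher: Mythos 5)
Your proof is correct and follows essentially the same route as the paper's: reduce to $\fGalk$, observe that rational representatives give trivial type, then use triviality of $[c_u]$ to replace $u$ by a genuine $\G(\kb)$-valued $1$-cocycle $u'$ and apply Hilbert~90 for $\G(\kb) = \prod_v \GL_{d_v}(\kb)$ to split it and produce a rational representative. The only cosmetic difference is that you explicitly write out the modification $u'_\tau = a_\tau^{-1}u_\tau$ and check both identities, where the paper compresses this to ``by suitably modifying the family''.
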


\begin{proof}
Identify this map with $\fGalk$; then it is injective by Proposition \ref{fibres_of_fGalk}. If $\G(\kb)\cdot M$ lies in $\mathrm{Im}\, \fGalk$, we can choose a representative $M\in \Reps(\kb)^{\Gal_k}$, so the relation $u_\tau\cdot\tau(M) = M$ is trivially satisfied if we set $u_\tau=1_{\Gal_k}$ for all $\tau\in\Gal_k$. But then $c_u(\tau_1,\tau_2) \equiv 1_{\Delta(\kb)}$ so, by definition of the type map, $\cT(\G(\kb)\cdot M) = [c_u] = [1]$, which proves that $\mathrm{Im}\,\fGalk \subset \cT^{-1}([1])$. Conversely, take $M\in \Reps(\kb)$ with  $\G(\kb)\cdot M \in \cT^{-1}([1])$. By definition of the type map, this means that there exists a family $(u_\tau)_{\tau\in \Gal_k}$ of elements of $\G(\kb)$ such that $u_{1_{\Gal_k}} = 1_{\G(\kb)}$, $u_\tau\cdot \tau(M) = M$ for all $\tau\in\Gal_k$ and $c_u(\tau_1,\tau_2) := u_{\tau_1} \tau_1(u_{\tau_2}) u_{\tau_1\tau_2}^{-1}\in\Delta(\kb)$ for all $(\tau_1,\tau_2)\in\Gal_k\times\Gal_k$, and $[c_u]=[1]$, as $\cT(\G(\kb)\cdot M) = [c_u]$ by construction of $\cT$. By suitably modifying the family $(u_\tau)_{\tau\in\Gal_k}$ if necessary, we can thus assume that $u_{\tau_1}\tau_1(u_{\tau_2}) = u_{\tau_1\tau_2}$, which means that $(u_\tau)_{\tau\in\Gal_k}$ is a $\G(\kb)$-valued $1$-cocycle for $\Gal_k$. As $\G(\kb) = \prod_{v\in V} \GL_{d_v}(\kb)$, we have $$H^1(\Gal_k; \G(\kb)) \simeq \prod_{v\in V} H^1(\Gal_k; \GL_{d_v}(\kb))$$ so, by a well-known generalisation of Hilbert's 90th Theorem, $H^1(\Gal_k;\G(\kb))=1$ (for instance, see \cite[Proposition X.1.3 p.151]{Serre_local_fields}).  Therefore, there exists $g\in\G(\kb)$ such that $u_\tau= g\tau(g^{-1})$ for all $\tau\in\Gal_k$. In particular, the relation $u_\tau\cdot\tau(M) =M$ implies that $\tau(g^{-1}\cdot M) = g^{-1}\cdot M$, i.e.\ $(g^{-1}\cdot M) \in \Rep(\kb)^{\Gal_k}$, which shows that $\cT^{-1}([1]) \subset \mathrm{Im}\, \fGalk$.
\end{proof}

\begin{ex}
If $k$ is a finite field (so, in particular, $k$ is perfect and $\Br(k) = 1$), then $\Fgs(k) \simeq \Modgs(k)$: the set of isomorphism classes of $\theta$-geometrically stable $d$-dimensional $k$-representations of $Q$ is the set of $k$-points of a $k$-variety $\Modgs$.
\end{ex}

\subsection{Rational points that do not come from rational representations}\label{rat_pts_that_do_not_come_from_rat_reps}

If the Brauer group of $k$ is non-trivial, the type map $\cT:\Modgs(k) \lra \Br(k)$ can have non-empty fibres other than $\cT^{-1}([1])$; see Example \ref{quaternionic_rep}. In this case, by Theorem \ref{image_of_k_rep_in_k_pts_of_the_moduli_scheme}, the natural map $\Fgs(k)\lra \Modgs(k)$ is injective but not surjective. The goal of the present section is to show that the fibres of the type map over non-trivial elements of the Brauer group of $k$ admit a modular interpretation, using representations over division algebras

If $[c]\in H^2(\Gal_k;\kb^\times)$ lies in the image of the type map, then by definition there exists a representation $M\in\Reps(\kb)$ and a family $(u_\tau)_{\tau\in\Gal_k}$ such that $u_{1_{\Gal_k}}=1_{\G(\kb)}$ and $u_\tau\cdot \Phi_\tau(M)=M$ for all $\tau\in\Gal_k$. Moreover, the given $2$-cocycle $c$ is cohomologous to the $2$-cocycle $c_u:(\tau_1,\tau_2) \lmt u_{\tau_1} \Psi_{\tau_1}(u_{\tau_2}) u_{\tau_1\tau_2}^{-1}$. In order to analyse such families $(u_\tau)_{\tau\in\Gal_k}$ in detail, we introduce the following terminology, reflecting the fact that these families will later be used to modify the $\Gal_k$-action on $\Rep(\kb)$ and $\G(\kb)$.

\begin{defn}\label{modifying_fmly_def}
A \textit{modifying family} $(u_\tau)_{\tau\in\Gal_k}$ is a tuple, indexed by $\Gal_k$, of elements $u_\tau\in\G(\kb)$ satisfying:
\begin{enumerate}
\item $u_{1_{\Gal_k}} = 1_{\G(\kb)}$;
\item For all $(\tau_1,\tau_2)\in\Gal_k \times \Gal_k$, the element $c_u(\tau_1,\tau_2) := u_{\tau_1} \Psi_{\tau_1}(u_{\tau_2}) u_{\tau_1\tau_2}^{-1}$ lies in the subgroup $\Delta(\kb)\subset \G(\kb)$.
\end{enumerate}
\end{defn}

\noindent In particular, if $u=(u_\tau)_{\tau\in\Gal_k}$ is a modifying family, then the induced map $$c_u:\Gal_k\times\Gal_k \lra \Delta(\kb)$$ is a normalised $\Delta(\kb)$-valued $2$-cocycle. We now show that a modifying family can indeed be used to define new $\Gal_k$-actions on $\Rep(\kb)$ and $\G(\kb)$.

\begin{prop}\label{modified_actions_Gal_case}
Let $u=(u_\tau)_{\tau\in\Gal_k}$ be a modifying family in the sense of Definition \ref{modifying_fmly_def}. Then we can define modified $\Gal_k$-actions
$$
\Phi^u:  \Gal_k \times \Rep(\kb)  \lra  \Rep(\kb),\quad (\tau,M)  \lmt  u_\tau \cdot \Phi_\tau(M) $$ and 
$$
 \Psi^u:  \Gal_k \times \G(\kb) \lra \G(\kb),\quad (\tau,g)  \lmt  u_\tau \Psi_\tau(g) u_\tau^{-1}$$ which are compatible in the sense of \eqref{comp_btw_the_actions_Gal_case} and such that the induced $\Gal_k$-actions on $\Mod(\kb)$ and $\Modgs(\kb)$ coincide with the previous ones, constructed in \eqref{induced_Gal_k_action_on_geom_quotient}.
\end{prop}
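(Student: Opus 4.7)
The plan is to verify in turn: (a) $\Phi^u$ is a $\Gal_k$-action on $\Rep(\kb)$; (b) $\Psi^u$ is a $\Gal_k$-action by group automorphisms on $\G(\kb)$; (c) the pair $(\Phi^u,\Psi^u)$ satisfies the compatibility \eqref{comp_btw_the_actions_Gal_case}; and (d) the induced $\Gal_k$-actions on $\Mod(\kb)$ and $\Modgs(\kb)$ agree with the original ones. Throughout, the two properties of the modifying family that do all the work are: the identity normalisation $u_{1_{\Gal_k}}=1_{\G(\kb)}$, and the fact that $c_u(\tau_1,\tau_2)$ lies in $\Delta(\kb)$, which is central in $\G(\kb)$ and acts trivially on $\Rep(\kb)$.

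For (a), the normalisation condition gives $\Phi^u_{1_{\Gal_k}}=\Id$. For the cocycle relation, I would compute
\[
\Phi^u_{\tau_1}\!\big(\Phi^u_{\tau_2}(M)\big)= u_{\tau_1}\cdot \Phi_{\tau_1}\!\big(u_{\tau_2}\cdot\Phi_{\tau_2}(M)\big)= \big(u_{\tau_1}\Psi_{\tau_1}(u_{\tau_2})\big)\cdot \Phi_{\tau_1\tau_2}(M),
\]
using the compatibility \eqref{comp_btw_the_actions_Gal_case} for the original actions, and then write $u_{\tau_1}\Psi_{\tau_1}(u_{\tau_2}) = c_u(\tau_1,\tau_2)\,u_{\tau_1\tau_2}$. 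Since $c_u(\tau_1,\tau_2)\in \Delta(\kb)$ acts trivially on $\Rep(\kb)$, this collapses to $u_{\tau_1\tau_2}\cdot\Phi_{\tau_1\tau_2}(M)=\Phi^u_{\tau_1\tau_2}(M)$, as required. For (b), the same manipulation yields
\[
\Psi^u_{\tau_1}\!\big(\Psi^u_{\tau_2}(g)\big)= u_{\tau_1}\Psi_{\tau_1}(u_{\tau_2})\,\Psi_{\tau_1\tau_2}(g)\,\Psi_{\tau_1}(u_{\tau_2})^{-1}u_{\tau_1}^{-1}= c_u(\tau_1,\tau_2)\,\Psi^u_{\tau_1\tau_2}(g)\,c_u(\tau_1,\tau_2)^{-1},
\]
and since $c_u(\tau_1,\tau_2)$ is central in $\G(\kb)$ this equals $\Psi^u_{\tau_1\tau_2}(g)$; each $\Psi^u_\tau$ is a group automorphism because it is the composite of the group automorphism $\Psi_\tau$ with conjugation by $u_\tau$.

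For (c), a direct calculation gives
\[
\Phi^u_\tau(g\cdot M)= u_\tau\cdot\Phi_\tau(g\cdot M)= u_\tau\Psi_\tau(g)\cdot\Phi_\tau(M)= \big(u_\tau\Psi_\tau(g)u_\tau^{-1}\big)\cdot\big(u_\tau\cdot\Phi_\tau(M)\big)= \Psi^u_\tau(g)\cdot\Phi^u_\tau(M),
\]
which is exactly the required compatibility. Finally, for (d), since $u_\tau\in\G(\kb)$, the modified action satisfies $\G(\kb)\cdot\Phi^u_\tau(M)=\G(\kb)\cdot\Phi_\tau(M)$, so the induced maps on the orbit space $\Reps(\kb)/\G(\kb)\simeq\Modgs(\kb)$ coincide with those of \eqref{induced_Gal_k_action_on_geom_quotient}; the same argument applies to the $S$-equivalence classes appearing in $\Mod(\kb)\simeq\Rep(\kb)/\!/_{\chi_\theta}\G(\kb)$, because translating a representative by an element of $\G(\kb)$ preserves both $\G(\kb)$-orbits and their closures. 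There is no real obstacle here: every step reduces to the centrality of $\Delta(\kb)$ in $\G(\kb)$ together with the triviality of its action on $\Rep(\kb)$, which are precisely the features packaged into Definition \ref{modifying_fmly_def}.
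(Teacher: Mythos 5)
Your proof is correct and follows the same route the paper indicates: the paper's own proof is the one-line remark that this is "a simple verification, using the fact that $\Delta(\kb)$ acts trivially on $\Rep(\kb)$ and is central in $\G(\kb)$," and your computations in (a)--(c) are exactly that verification spelled out. Your step (d) also cleanly subsumes the paper's additional remark that one must check $\Phi^u$ preserves (semi)stability: since $\Phi^u_\tau(M)$ and $\Phi_\tau(M)$ lie in the same $\G(\kb)$-orbit and the (semi)stable loci are $\G(\kb)$-invariant, preservation of (semi)stability for $\Phi^u$ follows from the corresponding statement for $\Phi$ without re-running the arguments of Propositions \ref{Gal_preserves_GIT_sst} and \ref{Gal_preserves_GIT_st}.
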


\begin{proof}
The proof is a simple verification, using the fact that $\Delta(\kb)$ acts trivially on $\Rep(\kb)$ and is central in $\G(\kb)$, then proceeding as in Proposition \ref{Gal_preserves_GIT_sst} to show that the modified $\Gal_k$-action is compatible with semistability and stability of $\kb$-representations.
\end{proof}

\noindent Let us denote by $ _u\Rep(\kb)^{\Gal_k}$ the fixed-point set of $\Phi^u$ in $\Rep(\kb)$ and by $ _u\G(\kb)^{\Gal_k}$ the fixed subgroup of $\G(\kb)$ under $\Psi^u$. Proposition \ref{modified_actions_Gal_case} then immediately implies that $ _u\G(\kb)^{\Gal_k}$ acts on $ _u\Rep(\kb)^{\Gal_k}$ and that the map $f_{\Gal_k,u}$ taking the $ _u\G(\kb)^{\Gal_k}$-orbit of a $\theta$-geometrically stable representation $M\in _u\Reps(\kb)^{\Gal_k}$ to its $\G(\kb)$-orbit in $\Mods(\kb)$ lands in $\cT^{-1}([c_u])$, since one has $u_\tau\cdot \tau(M) = M$ for such a representation. We then have the following generalisation of Theorem \ref{image_of_k_rep_in_k_pts_of_the_moduli_scheme}.

\begin{thm}\label{fibres_of_the_type_map}
Let $(u_\tau)_{\tau\in\Gal_k}$ be a modifying family in the sense of Definition \ref{modifying_fmly_def} and let $$c_u:\Gal_k\times\Gal_k \lra \Delta(\kb)\simeq \kb^\times$$ be the associated $2$-cocycle. Then the map
$$f_{\Gal_k,u}:  _u\Reps(\kb)^{\Gal_k} /  _u\G(\kb)^{\Gal_k} \lra \cT^{-1}([c_u]),\quad
 _u\G(\kb)^{\Gal_k} \cdot M \lmt \G(\kb)\cdot M$$ is bijective.
\end{thm}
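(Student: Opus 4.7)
The plan is to verify well-definedness directly, then prove injectivity via Hilbert's Theorem 90 applied to $\Delta(\kb) \cong \ov{k}^\times$, and prove surjectivity using the injectivity of the connecting map $\delta$ noted in Remark \ref{type factors}. The whole argument is a twisted analogue of the proof of Theorem \ref{image_of_k_rep_in_k_pts_of_the_moduli_scheme}, which handles the special case $[c_u]=[1]$.

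For well-definedness, if $M \in {}_u\Reps(\kb)^{\Gal_k}$, then $u_\tau \cdot \tau(M) = M$ shows $\tau(M) = u_\tau^{-1} \cdot M \in \G(\kb) \cdot M$, so this orbit is $\Gal_k$-invariant and the family $(u_\tau)$ itself witnesses that its type is $[c_u]$; translating $M$ by any $h \in {}_u\G(\kb)^{\Gal_k}$ leaves the orbit unchanged. For injectivity, suppose $M_1 = g \cdot M_2$ with $M_1, M_2 \in {}_u\Reps(\kb)^{\Gal_k}$. Combining $u_\tau \cdot \tau(M_i) = M_i$ with the compatibility relation \eqref{comp_btw_the_actions_Gal_case} yields $\Psi^u_\tau(g) \cdot M_2 = g \cdot M_2$, so $\lambda_\tau := g^{-1} \Psi^u_\tau(g)$ lies in $\Stab(M_2) = \Delta(\kb)$ by Corollary \ref{stabiliser_of_GIT_stable_points}. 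Centrality of $\Delta$ in $\G$ implies that $\Psi^u$ restricts on $\Delta(\kb)$ to the original Galois action, and the group-action property of $\Psi^u$ forces the cocycle identity $\lambda_{\tau_1\tau_2} = \lambda_{\tau_1}\tau_1(\lambda_{\tau_2})$. Hilbert's Theorem 90 then produces $\delta \in \Delta(\kb)$ with $\lambda_\tau = \delta\, \tau(\delta)^{-1}$; a direct check gives $\Psi^u_\tau(g\delta) = g\delta$, and since $\delta$ acts trivially on $\Rep(\kb)$ we conclude $(g\delta) \cdot M_2 = M_1$, so $M_1$ and $M_2$ represent the same ${}_u\G(\kb)^{\Gal_k}$-orbit.

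For surjectivity, start with an orbit $\G(\kb) \cdot M \in \cT^{-1}([c_u])$ and any family $(v_\tau)$ satisfying $v_\tau \cdot \tau(M) = M$, so that $[c_v] = [c_u]$ by the construction of $\cT$. Under the projection $\G \lra \Gbar := \G/\Delta$, the images $\bar u$ and $\bar v$ are $\Gbar(\kb)$-valued $1$-cocycles whose classes in $H^1(\Gal_k, \Gbar(\kb))$ both map to $[c_u]$ under the connecting homomorphism $\delta$. By Remark \ref{type factors}, $\delta$ is injective (because $H^1(\Gal_k, \G(\kb)) = 1$ by Hilbert 90 applied to each $\GL_{d_v}$), so $[\bar u] = [\bar v]$; hence there exists $\bar g \in \Gbar(\kb)$ with $\bar v_\tau = \bar g^{-1} \bar u_\tau \tau(\bar g)$. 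Any lift $g \in \G(\kb)$ of $\bar g$ then satisfies $g^{-1} u_\tau \tau(g) v_\tau^{-1} \in \Delta(\kb)$, which acts trivially on $M$, whence
\[ u_\tau \cdot \tau(g \cdot M) = u_\tau \tau(g) \cdot \tau(M) = u_\tau \tau(g) v_\tau^{-1} \cdot M = g \cdot M. \]
Therefore $g \cdot M \in {}_u\Reps(\kb)^{\Gal_k}$ and its ${}_u\G(\kb)^{\Gal_k}$-orbit maps under $f_{\Gal_k,u}$ to $\G(\kb) \cdot M$.

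The delicate step is surjectivity: passing from the equality of Brauer classes $[c_u] = [c_v]$ to a concrete conjugacy between $\bar u$ and $\bar v$ in $\Gbar(\kb)$ requires the injectivity of $\delta$ from Remark \ref{type factors}, which itself rests on Hilbert's Theorem 90 applied to $\G$. The injectivity part is comparatively routine once one observes that the twisted action $\Psi^u$ restricts on the central subgroup $\Delta(\kb)$ to the usual Galois action, so Hilbert 90 for $\ov{k}^\times$ can be invoked directly.
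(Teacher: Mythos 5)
Your proof is correct. The injectivity argument matches the paper's (both reduce, via the observation that $\Psi^u$ restricts on the central $\Delta(\kb)$ to the untwisted Galois action, to Hilbert 90 for $\GG_m$, exactly as in Corollary \ref{injectivity_of_map_to_k_pts}). For surjectivity, however, you take a genuinely different route. The paper adapts the proof of Theorem \ref{image_of_k_rep_in_k_pts_of_the_moduli_scheme}: it turns the family $(v_\tau)$ into a $1$-cocycle for the $\Psi^u$-twisted action and splits it, the key new input being the explicit verification of a \emph{twisted} Hilbert 90, $H^1_u(\Gal_k;\G(\kb))=1$. You instead pass to the quotient $\Gbar=\G/\Delta$, use the factorisation $\cT=\delta\circ\cT'$ from Remark \ref{type factors} together with the injectivity of $\delta$ to deduce $[\bar{u}]=[\bar{v}]$ in $H^1(\Gal_k,\Gbar(\kb))$, lift the coboundary relating $\bar{u}$ and $\bar{v}$, and check directly that $g\cdot M$ is $\Phi^u$-fixed. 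Your route is arguably slicker, since it replaces the ad hoc twisted Hilbert 90 verification by the already-recorded injectivity of $\delta$; of course, both ultimately rest on Hilbert 90 for $\G$ (and, for the injectivity of $\delta$, for all of its inner forms as well, not just $\G$ itself, via the usual twisting argument in nonabelian cohomology — your parenthetical ``because $H^1(\Gal_k, \G(\kb)) = 1$'' compresses this slightly), so the two proofs are not unrelated. One small clarification would tighten your argument: in the surjectivity step, the lift $g\in\G(\kb)$ can be chosen arbitrarily since $\G(\kb)\lra\Gbar(\kb)$ is surjective ($H^1(\Gal_k,\Delta(\kb))=1$, but even more simply, because $\kb$ is algebraically closed); the subsequent computation shows the class of $g^{-1}u_\tau\tau(g)v_\tau^{-1}$ in $\Gbar(\kb)$ is trivial, which is exactly the containment in $\Delta(\kb)$ that you use.
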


\begin{proof}
As $\Delta(\kb)$ is central in $\G(\kb)$, the action induced by $\Psi^u$ on $\Delta(\kb)$ coincides with the one induced by $\Psi$, so the injectivity of $f_{\Gal_k,u}$ can be proved as in Proposition \ref{fibres_of_fGalk}. The proof of surjectivity is then exactly the same as in Theorem \ref{image_of_k_rep_in_k_pts_of_the_moduli_scheme}. The only thing to check is that $H^1_u(\Gal_k;\G(\kb))=1$, where the subscript $u$ means that $\Gal_k$ now acts on $\G(\kb)$ via the action $\Psi^u$; this follows from the proof of \cite[Proposition X.1.3 p.151]{Serre_local_fields} once one observes that, if one sets $\Psi^u_\tau(x) := u_\tau 
\tau(x)$ for all $x\in \kb^{d_v}$, then one still has, for all $A\in\GL_{d_v}(\kb)$ and all $x\in \kb^{d_v}$, $\Psi^u_\tau(Ax) = \Psi^u_\tau(A) \Psi^u_\tau(x)$. After that, the proof is the same as in \textit{loc.\ cit.}.
\end{proof}

\noindent By Theorem \ref{fibres_of_the_type_map}, we can view the fibre $\cT^{-1}([c_u])$ as the set of isomorphism classes of $\theta$-geometrically stable, $(\Gal_k,u)$-invariant, $d$-dimensional $\kb$-representations of $Q$. Note that, in the context of $(\Gal_k,u)$-invariant $\kb$-representations of $Q$, semistability is defined with respect to $(\Gal_k,u)$-invariant $\kb$-subrepresentations only. However, analogously to Proposition \ref{sst_and_field_ext}, this is in fact equivalent to semistability with respect to all subrepresentations. The same holds for geometric stability, by definition. We have thus obtained a decomposition of the set of $k$-points of $\Modgs$ as a disjoint union of moduli spaces, completing the proof of Theorem \ref{decomp_thm_gal_intro}.


In order to give a more intrinsic modular description of each fibre of the type map $\cT^{-1}([c_u])$ appearing in the decomposition of $\Modgs(k)$ given by Theorem \ref{decomp_thm_gal_intro}, we recall that the Brauer group of $k$ is also the set of isomorphism classes of central division algebras over $k$, or equivalently the set of Brauer equivalence classes of central simple algebras over $k$. The dimension of any central simple algebra $A$ over $k$ is a square and the index of $A$ is then $ \ind\,(A) := \sqrt{\dim_k(A)}$.

\begin{prop}\label{nec_con_div_alg}
Assume that a central division algebra $D \in \Br(k)$ lies in the image of the type map 
\[ \cT: \Modgs(k) \lra H^2(\Gal_k; \ov{k}^\times) \cong \Br(k). \]
Then the index of $D$ divides the dimension vector; that is, $d = \ind\,(D)d'$ for some dimension vector $d' \in \NN^{V}$.
\end{prop}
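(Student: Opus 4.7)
My plan is to use the factorisation $\cT = \delta \circ \cT'$ from Remark~\ref{type factors}, combined with naturality of the (non-Abelian) connecting homomorphism, applied one vertex at a time. The key observation is that for each $v \in V$, the projection $\pi_v : \G = \prod_{w\in V}\GL_{d_w} \onto \GL_{d_v}$ restricts on $\Delta = \{(tI_{d_w})_{w\in V}\} \subset \G$ to the standard isomorphism with the centre $\GG_m \subset \GL_{d_v}$. Hence $\pi_v$ is a morphism from the central extension $1 \to \Delta \to \G \to \Gbar \to 1$ to the classical sequence $1 \to \GG_m \to \GL_{d_v} \to \PGL_{d_v} \to 1$, inducing the identity on kernels under the usual identification $\Delta(\kb) \cong \kb^\times \cong \GG_m(\kb)$.

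The first step is to record the commutative square produced by naturality,
$$
\begin{CD}
H^1(\Gal_k, \Gbar(\kb)) @>\delta>> H^2(\Gal_k, \Delta(\kb)) \\
@V\pi_{v,*}VV @VV=V \\
H^1(\Gal_k, \PGL_{d_v}(\kb)) @>\delta_v>> H^2(\Gal_k, \GG_m(\kb)),
\end{CD}
$$
in which both copies of the right-hand column are canonically identified with $\Br(k)$. The second step is to start from $[D] \in \im\,\cT$, pick $M \in \Reps(\kb)$ with $\cT(\G(\kb)\cdot M) = [D]$, and set $[\bar u] := \cT'(\G(\kb)\cdot M) \in H^1(\Gal_k, \Gbar(\kb))$. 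Remark~\ref{type factors} then gives $[D] = \delta([\bar u])$, and chasing the square yields $[D] = \delta_v(\pi_{v,*}[\bar u])$ for every $v \in V$; in particular $[D] \in \im\,\delta_v$ for each vertex.

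The third and final step invokes the classical Galois-descent interpretation of $H^1(\Gal_k, \PGL_{d_v}(\kb))$ as the pointed set of isomorphism classes of central simple $k$-algebras of degree $d_v$, under which $\delta_v$ sends a class to its Brauer class (see e.g.\ Gille--Szamuely, \emph{Central Simple Algebras and Galois Cohomology}, Chap.~4). The image of $\delta_v$ is therefore the set of Brauer classes representable by a CSA of degree $d_v$, i.e.\ the classes $[D']$ with $\ind(D') \mid d_v$. Applying this to $[D]$ for every $v \in V$ gives $\ind(D) \mid d_v$ for all $v$, and setting $d' := (d_v/\ind(D))_{v\in V} \in \NN^V$ yields $d = \ind(D)\,d'$, as required.

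I do not anticipate any genuine obstacle. The two points deserving care are the commutativity of the diagram above and the characterisation of $\im\,\delta_v$; the former is a routine diagram chase from the morphism of short exact sequences induced by $\pi_v$, while the latter is standard Galois descent for matrix algebras. Both are well documented in the literature, and the substance of the argument is really just the diagonal embedding $\Delta \hookrightarrow \G$ inducing the centre in each factor.
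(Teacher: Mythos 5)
Your proof is correct and follows essentially the same route as the paper's: both use the factorisation $\cT = \delta \circ \cT'$ from Remark~\ref{type factors}, the naturality of the connecting map under the vertex-wise projection $\Gbar \to \PGL_{d_v}$, and the identification of $H^1(\Gal_k, \PGL_{d_v}(\kb))$ with central simple algebras of degree $d_v$. The only cosmetic difference is that you invoke directly the classical fact that $\im\,\delta_v$ consists of Brauer classes of index dividing $d_v$, whereas the paper spells out the same conclusion via the dimension count $\dim_k A_v = \ind(D)^2 (d'_v)^2$ for $A_v \simeq M_{d'_v}(D)$.
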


\begin{proof}
We recall from Remark \ref{type factors} that $\cT$ has the following factorisation
\[ \xymatrix@1{\cT: \Modgs(k) \ar[r]^{\cT' \quad} & H^1(\Gal_k,\Gbar(\kb)) \ar[r]^{\delta} & H^2(\Gal_k, \Delta(\kb)) }, \]
where for a $\Gal_k$-invariant orbit $\G(\kb) \cdot M$ in $\Modgs(\kb)^{\Gal_k}$, we choose elements $u_\tau \in \G(\kb)$ for all $\tau \in \Gal_k$ such that $u_1 = 1_{\G}$ and $ u_\tau \cdot \tau(M) = M$, which determines a $\Gbar(\kb)$-valued 1-cocycle $\bar{u} : \Gal_k \lra \Gbar(\kb)$ such that 
\[\cT'(\G(\kb) \cdot M) = [\bar{u}].\]

For each vertex $v \in V$, the projection $\G \lra \GL_{d_v}$ maps $\Delta$ to the central diagonal torus $\Delta_v \subset \GL_{d_v}$, and so there is an induced map $\Gbar \lra \PGL_{d_v}$. In particular, this gives, for all $v\in V$, a commutative diagram
\begin{equation}\label{comp_btw_CSA_at_vertices} 
\xymatrix@1{H^1(\Gal_k,\Gbar(\kb)) \ar[r]^{}\ar[d] & H^1(\Gal_k,\PGL_{d_v}(\kb)) \ar[d]^{} \\ H^2(\Gal_k, \Delta(\kb)) \ar[r]^{\cong} & H^2(\Gal_k, \Delta_v(\kb)). } 
\end{equation} Since, by the Noether-Skolem Theorem, $\PGL_{d_v}(\kb)\simeq\Aut(\Mat_{d_v}(\kb))$, we can view $H^1(\Gal_k,\PGL_{d_v}(\kb))$ as the set of central simple algebras of index $d_v$ over $k$ (up to isomorphism): the class $[\bar{u}] \in H^1(\Gal_k,\Gbar(\kb))$ then determines, for each $v \in V$, an element $[\bar{u}_v] \in H^1(\Gal_k,\PGL_{d_v}(\kb))$, which in turn corresponds to a central simple algebra $A_v$ over $k$, of index $d_v$. Moreover, if $[\bar{u}]$ maps to the division algebra $D$ in $\Br(k)$, then we have, by the commutativity of Diagram \eqref{comp_btw_CSA_at_vertices}, that $D$ is Brauer equivalent to $A_v$ for all vertices $v$ (that is, $A_v \simeq M_{d'_v}(D)$ for some $d'_v\geq 1$). If $e :=\ind\, (D)$, then $\dim_k A_v=(\dim_k D)(\dim_D A_v) = e^2 {d'_v}^2$ so $\ind\, (A_v) = e d'_v$, i.e.\ $d_v=e d'_v$, for all $v \in V$. Thus, the index of $D$ divides the dimension vector $d$. 
\end{proof}

\noindent Consequently, we obtain the following sufficient condition for the decomposition of $\Modgs(k)$ to be indexed only by the trivial class in $\Br(k)$, in which case, all rational points come from rational representations.

\begin{cor}
Let $d \in \NN^V$ be a dimension vector which is not divisible by any of the indices of non-trivial central division algebras over $k$; then $\Modgs(k)$ is the set of isomorphism classes of $\theta$-geometrically stable $k$-representations of $Q$ of dimension $d$.
\end{cor}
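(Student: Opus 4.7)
The plan is to combine the factorisation of the type map through the Brauer group with Theorem \ref{image_of_k_rep_in_k_pts_of_the_moduli_scheme}, via the divisibility obstruction of Proposition \ref{nec_con_div_alg}. This corollary is an immediate consequence of those results, so there is essentially no obstacle — the only thing to verify is that the hypothesis on $d$ forces the image of the type map to be trivial.

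First, I would note that by Proposition \ref{nec_con_div_alg}, any central division algebra $D \in \Br(k)$ lying in the image of the type map $\cT : \Modgs(k) \lra \Br(k)$ must have index dividing the dimension vector $d$ (in the sense that $d = \ind\,(D)d'$ for some $d' \in \NN^V$). Under the hypothesis that $d$ is not divisible by any index of a non-trivial central division algebra over $k$, this forces $\ind\,(D) = 1$, i.e.\ $D \simeq k$, which corresponds to the trivial class $[1] \in \Br(k)$. Hence $\mathrm{Im}\,\cT = \{[1]\}$.

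Next, I would apply Theorem \ref{image_of_k_rep_in_k_pts_of_the_moduli_scheme}, which identifies $\cT^{-1}([1])$ with the set $\Fgs(k)$ of isomorphism classes of $\theta$-geometrically stable $d$-dimensional $k$-representations of $Q$ via the natural map $\fGalk$. Since $\mathrm{Im}\,\cT = \{[1]\}$, we have $\Modgs(k) = \cT^{-1}([1]) \simeq \Fgs(k)$, which is the desired conclusion. Alternatively, one may invoke the decomposition in Theorem \ref{decomp_thm_gal_intro} directly: the indexing set reduces to the single trivial Brauer class, and the corresponding moduli piece is exactly the set of isomorphism classes of rational $\theta$-geometrically stable representations.
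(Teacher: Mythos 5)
Your proof is correct and follows exactly the route the paper intends: Proposition \ref{nec_con_div_alg} forces the image of the type map to be the trivial Brauer class, and Theorem \ref{image_of_k_rep_in_k_pts_of_the_moduli_scheme} then identifies $\cT^{-1}([1])$ with $\Fgs(k)$. No further comment is needed.
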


\begin{ex}
For $k = \RR$, we have $\Br(\RR) = \{ \RR, \HH \}$ and  $\ind\,(\HH) = 2$; hence, for any dimension vector $d$ indivisible by $2$, the set $\Modgs(\RR)$ is the set of isomorphism classes of $\theta$-geometrically stable $\RR$-representations of $Q$ of dimension $d$.
\end{ex}

For a central division algebra $D \in \Br(k)$, we will interpret the fibre $\cT^{-1}(D)$ as the set of isomorphism classes of $\theta$-geometrically stable $D$-representations of $Q$ of dimension $d'$, where $d = \ind\,(D) d'$ (\textit{cf.}\ Theorem \ref{fibres div alg}). First we give some preliminary results about $D$-representations of $Q$ (by which we mean a representation of $Q$ in the category of $D$-modules). Note that, as $D$ is a skew field, the category $\cmod(D)$ of finitely generated $D$-modules behaves in the same way as a category of finite dimensional vector spaces over a field: $\cmod(D)$ is a semisimple Abelian category with one simple object $D$ and we can talk about the dimension of objects in $\cmod(D)$. We let $\crep_D(Q)$ denote the category of representations of $Q$ in the category $\cmod(D)$, and we let $\crep_D^d(Q)$ denote the subcategory of $d$-dimensional representations. Occasionally, we will encounter representations of $Q$ in the category of $A$-modules, where $A$ is a central simple algebra $A$ over $k$, but only fleetingly (see Remark \ref{rmk_on_Morita_equiv}).

Let $D \in \Br(k)$ be a division algebra. Recall that the connecting homomorphisms 
\begin{equation}\label{conn_homo_PGL}
H^1(\Gal_k,\PGL_e(\kb)) \overset{\delta_e}{\lra} H^2(\Gal_k;\kb^\times)
\end{equation}
associated for all $e \geq 1$ to the short exact sequences $$1\lra \kb^\times \lra \GL_e(\kb) \lra \PGL_e(\kb)\lra 1$$ induce a bijective map 
\begin{equation}\label{Brauer_gp_as_an_H1}
\varinjlim_e H^1(\Gal_k;\PGL_e(\kb)) \overset{\delta}{\lra} H^2(\Gal_k;\kb^\times)\simeq \Br(k)
\end{equation} (for example, see \cite[Corollary 2.4.10]{GS}), via which $D$ is given by a class $[\ov{a}_D]\in H^1(\Gal_k;\PGL_e(\kb))$ where $e:=\ind\,(D)$ is the index of $D$. We can then choose a $\GL_e(\kb)$-valued modifying family $a_D=(a_{D,\tau})_{\tau\in\Gal_k}$ such that, for each $\tau\in\Gal_k$, the element $\ov{a}_{D,\tau}\in\PGL_e(\kb)$ is the image of $a_{D,\tau}\in\GL_e(\kb)$ under the canonical projection. If we denote by $c_{a_D}$ the $\GG_m(\kb)$-valued $2$-cocycle associated to the modifying family $a_D$ (see Definition \ref{modifying_fmly_def}), we have $[c_{a_D}]=\delta([\ov{a}_D])=D$ in $\Br(k)$. In particular, the class $[c_{a_D}]$ is independent of the modifying family $[a_D]$ chosen as above.

\begin{rmk}\label{rmk_on_Morita_equiv}
Let $D \in \Br(k)$ be a central division algebra of index $e$. Since $\Br(\kb) = \{ 1\}$, the central simple algebra $\kb\otimes_k D$ over $\kb$ is Brauer equivalent to $\kb$; that is, $\kb\otimes_k D \cong \Mat_{e}(\kb)$. So, if $W$ is a $d'$-dimensional $D$-representation of $Q$, then we can think of $\kb\otimes_k W$ as a $d'$-dimensional $\Mat_{e }(\kb)$-representation of $Q$. For an algebra $R$, under the Morita equivalence of categories $\cmod(\Mat_{e}(R)) \simeq \cmod(R)$, the $\Mat_{e }(R)$-module $\Mat_{e}(R)$ corresponds to the $R$-module $R^{e}$. So, for $d = ed'$, there is an equivalence of categories
\[ \crep_{\kb\otimes_k D}^{d'} (Q) \cong \crep_{\kb}^d(Q). \]
In particular, we can view $\kb \otimes_k W$ as a $d$-dimensional $\kb$-representation of $Q$. This point of view will be useful in the proof of Proposition \ref{exists_k_var_for_D_reps}. More generally, if $L/k$ is an arbitrary field extension, the central simple $L$-algebra $L\otimes_k D$ is isomorphic to a matrix algebra $\Mat_{e}(D_L)$, where $D_L$ is a central division algebra over $L$ uniquely determined up to isomorphism. By the Morita equivalence $\cmod(\Mat_{e}(D_L)) \simeq \cmod(D_L)$, we can view the $d'$-dimensional $\Mat_{e}(D_L)$-representation $L\otimes_k W$ as a $d$-dimensional representation of $Q$ over the central division algebra $D_L\in\Br(L)$, which is the point of view we shall adopt in Definition \ref{geometric_stability_for_D_reps}.
\end{rmk}

For a division algebra $D \in \Br(k)$, consider the functor $\brep_{Q,d',D} : \text{c-Alg}_k \ra \Sets$ (resp.\ $\baut_{Q,d',D} : \text{c-Alg}_k \ra \Sets$) assigning to a commutative $k$-algebra $R$ the set
\begin{equation}\label{k_var_for_D_reps}
\brep_{Q,d',D}(R) = \bigoplus_{a \in A} \Hom_{\cmod(R \otimes_k D)}(R \otimes_k D^{d'_{t(a)}}, R \otimes_k D^{d'_{h(a)}})
\end{equation} (resp.\ $\baut_{Q,d',D}(R) = \prod_{v \in V} \Aut_{\cmod(R \otimes_k D)}(R \otimes_k D^{d'_v})$), where $d'$ is any dimension vector. Note that if $D=k$, these are the functor of points of the $k$-schemes $\rep_{Q,d'}$ and $\mathbf{G}_{Q,d'}$ introduced in Section \ref{GIT_constr_section}. We will now show that, for all $D\in \Br(k)$, these functors are representable by $k$-varieties, using Galois descent over the perfect field $k$. Let $e:=\ind \,(D)$ and choose a 1-cocycle $[\bar{a}_D] \in H^1(\Gal_k,\PGL_e(\kb))$ whose image under $\delta$, the bijective map from \eqref{Brauer_gp_as_an_H1}, is $D$. For each $\tau \in \Gal_k$, pick a lift $a_{D,\tau} \in \GL_e(\kb)$ of $\bar{a}_{D,\tau}$. Let $d:= ed'$ and  consider the modified $\Gal_k$-action on the $\kb$-schemes $\rep_{Q,d,\kb}:=\spec\kb\times_k \Rep$ (resp.\ $\mathbf{G}_{Q,d,\kb}:= \spec \kb \times_k \G$) given by the modifying family $u_D = (u_{D,\tau})_{\tau \in \Gal_k}$ defined by
\begin{equation}\label{direct_sum_modif_family}
\GL_{d_v}(\kb) \ni u_{D,\tau,v} := \left. \left(\begin{array}{ccc} a_{D,\tau} & & 0 \\ & \ddots & \\ 0 & &a_{D,\tau} \end{array} \right) \right\} \mathrm{(}d'_v\ \mathrm{times)}
\end{equation}
 (\textit{cf.}\ Proposition \ref{modified_actions_Gal_case}). This descent datum is effective, as $\rep_{Q,d,\kb}$ is affine so we obtain a smooth affine $k$-variety $\rep_{Q,d',D}$ (resp.\ $\mathbf{G}_{Q,d',D}$) such that $$\spec \kb \times_k \rep_{Q,d',D} \simeq \rep_{Q,d,\kb}$$ (resp.\ $\spec\kb \times_k \mathbf{G}_{Q,d',D} \simeq \mathbf{G}_{Q,d,\kb}$); for example, see \cite[Section 14.20]{GW}. 
 
For a commutative $k$-algebra $R$, we let $\overline{R}:= \kb \otimes_k R$ and we note that there is a natural $\Gal_k$-action on $\brep_{Q,d',D}(\overline{R})$. Moreover, the natural map
\begin{equation}\label{galois_descent_on_functor}
\brep_{Q,d',D}(R) \ra \brep_{Q,d',D}(\overline{R})^{\Gal_k}
\end{equation} 
is an isomorphism, by Galois descent for module homomorphisms, and similarly, this map is an isomorphism for $\baut_{Q,d',D}$.

\begin{prop}\label{exists_k_var_for_D_reps}
Let $D\in \Br(k)$ be a division algebra of index $e:=\ind\,(D)$. For a dimension vector $d'$, we let $d:= ed'$. Then the functors $\brep_{Q,d',D}$ and $\baut_{Q,d',D}$ introduced in \eqref{k_var_for_D_reps} are representable, respectively, by the $k$-varieties $\rep_{Q,d',D}$ and $\mathbf{G}_{Q,d',D}$ defined as above using descent theory and the modifying family \eqref{direct_sum_modif_family}. In particular, we have 
\[ \rep_{Q,d',D}(k) = \bigoplus_{a \in A} \Hom_{\cmod(D)}(D^{d'_{t(a)}}, D^{d'_{h(a)}}) \simeq {_{u_D}}\rep_{Q,d}(\kb)^{\Gal_k} \]
and $$\mathbf{G}_{Q,d',D}(k) = \prod_{v\in V} \GL_{d_v}(D) \simeq {_{u_D}}\G(\kb)^{\Gal_k},$$ so that $\rep_{Q,d',D}(k) / \mathbf{G}_{Q,d',D}(k)$ is in bijection with the set of isomorphism classes of $d'$-dimensional representations of $Q$ over the division algebra $D$. Moreover, there is an algebraic action of $\mathbf{G}_{Q,d',D}$ on $\rep_{Q,d',D}$ over $k$.
\end{prop}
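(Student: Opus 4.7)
My plan is to establish all the claims by systematic Galois descent, building on the fact that the $k$-varieties $\rep_{Q,d',D}$ and $\mathbf{G}_{Q,d',D}$ have already been defined as effective descent data from the affine $\kb$-schemes $\rep_{Q,d,\kb}$ and $\G_{\kb}$ equipped with the $u_D$-twisted $\Gal_k$-actions. Representability will follow by computing the functors of points through this descent; the $k$-rational points will drop out as the special case $R=k$; and the $\G$-action on $\Rep$ will descend thanks to the compatibility relation recorded in Proposition \ref{modified_actions_Gal_case}.

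The central computation will go as follows. For a commutative $k$-algebra $R$, write $\overline{R}:=\kb\otimes_k R$. Since the schemes in play are affine, Galois descent gives a natural bijection of $\rep_{Q,d',D}(R)$ with the fixed points $\rep_{Q,d,\kb}(\overline{R})^{\Gal_k,\,u_D}$ for the $u_D$-twisted action, and similarly for $\mathbf{G}_{Q,d',D}(R)$. Using the block-diagonal shape of $u_{D,\tau,v}$ in \eqref{direct_sum_modif_family}, I will rewrite
\[ \Mat_{d_{h(a)}\times d_{t(a)}}(\overline{R}) \;\cong\; \Mat_{d'_{h(a)}\times d'_{t(a)}}\!\bigl(\Mat_e(\overline{R})\bigr), \]
so that the twisted action acts entry-wise on each $\Mat_e(\overline{R})$-block by $\tau\cdot x = a_{D,\tau}\,\tau(x)\,a_{D,\tau}^{-1}$. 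The key lemma to prove is the ring-level identification
\[ \Mat_e(\overline{R})^{\Gal_k,\,\bar{a}_D} \;=\; R\otimes_k D, \]
which is precisely Galois descent for the $k$-form of $\Mat_e$ classified by the cocycle $\bar{a}_D \in H^1(\Gal_k,\PGL_e(\kb))$; the point is that by construction $\delta([\bar{a}_D]) = D$ in $\Br(k)$, so the form in question is a matrix algebra over $D$. Assembling these identifications arrow by arrow will give
\[ \rep_{Q,d',D}(R) = \bigoplus_{a\in A}\Hom_{\cmod(R\otimes_k D)}\!\bigl((R\otimes_k D)^{d'_{t(a)}},(R\otimes_k D)^{d'_{h(a)}}\bigr) = \brep_{Q,d',D}(R), \]
functorially in $R$, and applying the same argument vertex by vertex will yield the analogous identification $\mathbf{G}_{Q,d',D}(R) = \baut_{Q,d',D}(R)$.

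Specialising to $R=k$ then immediately yields the two stated equalities $\rep_{Q,d',D}(k) = {_{u_D}}\Rep(\kb)^{\Gal_k}$ and $\mathbf{G}_{Q,d',D}(k) = {_{u_D}}\G(\kb)^{\Gal_k}$. The bijection between $\rep_{Q,d',D}(k)/\mathbf{G}_{Q,d',D}(k)$ and isomorphism classes of $d'$-dimensional $D$-representations of $Q$ is then formal: any such representation is isomorphic to one with underlying $D$-modules $(D^{d'_v})_{v\in V}$, and isomorphisms at fixed underlying modules are exactly elements of $\prod_v \Aut_{\cmod(D)}(D^{d'_v}) = \mathbf{G}_{Q,d',D}(k)$. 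For the algebraic action, I will note that the conjugation action $\G\times\Rep\to\Rep$ is already a $k$-morphism, hence its base change to $\kb$ is $\Gal_k$-equivariant for the natural actions on $\G_\kb$ and $\rep_{Q,d,\kb}$; by the compatibility relation in Proposition \ref{modified_actions_Gal_case} it remains equivariant for the twisted actions $\Phi^{u_D}$ and $\Psi^{u_D}$, and hence descends, by affine Galois descent, to an algebraic action of $\mathbf{G}_{Q,d',D}$ on $\rep_{Q,d',D}$ defined over $k$. The main obstacle I anticipate is the central descent identification $\Mat_e(\overline{R})^{\Gal_k,\,\bar{a}_D} = R\otimes_k D$: this is the one place where the non-trivial Brauer class actively enters, tying the abstractly descended $k$-form to the intrinsic central division algebra $D$. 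All remaining steps are routine bookkeeping with the block decomposition, or standard applications of Galois descent for affine schemes.
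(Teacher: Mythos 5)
Your proposal is correct and follows essentially the same strategy as the paper's own proof, resting on the identical key fact that the $\bar a_D$-twisted $\Gal_k$-fixed points of $\Mat_e(\overline R)$ recover $R\otimes_k D$. The only organizational difference is minor: the paper first builds the $\overline R$-level isomorphism $\brep_{Q,d',D}(\overline R)\simeq\rep_{Q,d,\kb}(\overline R)$ via $\kb\otimes_k D\cong\Mat_e(\kb)$ and Morita equivalence and then checks the two Galois actions match, whereas you go directly to the fixed points using the block-diagonal structure of $u_{D,\tau}$, which neatly sidesteps the explicit appeal to Morita equivalence.
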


\begin{proof} 
We will prove that $\brep_{Q,d',D}$ is representable by the $k$-variety $\rep_{Q,d',D}$ obtained by descent theory from $\rep_{Q,d,\kb}$ using the modified Galois action associated to the modifying family \eqref{direct_sum_modif_family}. The analogous statement for $\baut_{Q,d',D}$ is proved similarly and the rest of the proposition is then clear. To prove the statement for $\brep_{Q,d',D}$, we need to check for all $R \in \text{c-Alg}_k$ that $\brep_{Q,d',D}(R) \simeq \rep_{Q,d}(R)$ (and these isomorphisms are functorial in $R$). By Galois descent and \eqref{galois_descent_on_functor}, it suffices to show for $\overline{R}:= \kb \otimes_k R$, that $\brep_{Q,d',D}(\overline{R}) \simeq \rep_{Q,d,\kb}(\overline{R})$ and that the natural Galois action on $\brep_{Q,d',D}(\kb)$ coincides with the $u_D$-modified Galois action on $\Rep(\kb)$ defined as in Proposition \ref{modified_actions_Gal_case} using the modifying family $u_D$ introduced in \eqref{direct_sum_modif_family}. By definition of $\brep_{Q,d',D}$, one has 
$$\brep_{Q,d',D}(\overline{R}) = \bigoplus_{a \in A} \Hom_{\cmod(\overline{R} \otimes_k D)}(\overline{R} \otimes_k D^{d'_{t(a)}}, \overline{R} \otimes_k D^{d'_{h(a)}}).$$ 
As the division algebra $D$ is in particular a central simple algebra over $k$, the $\kb$-algebra $\kb\otimes_k D$ is also central and simple (over $\kb$). Since $\kb$ is algebraically closed, this implies that $\kb\otimes_k D\simeq \Mat_{ e}(\kb)$, where $e=\ind(D)$. Likewise 
\[ (\overline{R} \otimes_k D^{d'_v}) \simeq  (\kb\otimes_k R) \otimes_k D^{d'_v} \simeq R \otimes_k (\kb \otimes_k D)^{d'_{v}} \simeq  R \otimes_k \Mat_{e}(\kb)^{d'_{v}} \simeq \Mat_{e}(\overline{R})^{d'_{v}}.\] 
Through these isomorphisms, the canonical Galois action $z\otimes x \lmt \tau(z) \otimes x$ on $\overline{R}\otimes_k D$ translates to $M\lmt a_{D,\tau} \tau(M) a_{D,\tau^{-1}}$ (see for instance \cite[Chapter 10, $\S$5]{Serre_local_fields}), where $M\in \Mat_{ e}(\overline{R})$ and the element $a_{D,\tau}\in \GL_e(\kb)$ belongs to a family that maps to $D\in \Br(k)$ under the isomorphism \eqref{Brauer_gp_as_an_H1} and is the same as the one used to define the modifying family $u_D$ in \eqref{direct_sum_modif_family}. Under the Morita equivalence of categories $\cmod(\Mat_{e}(\overline{R})) \simeq \cmod(\overline{R})$ recalled in Remark \ref{rmk_on_Morita_equiv}, the $\Mat_{e}(\overline{R})$-module $\Mat_{e}(\overline{R})^{d'_v}$ corresponds to $\overline{R}^{d_v}$, so we have $$\brep_{Q,d',D}(\overline{R}) \simeq \bigoplus_{a\in A} \Hom_{\cmod(\overline{R})}(\overline{R}^{d_{t(a)}}, \overline{R}^{d_{h(a)}}) = \rep_{Q,d,\kb}(\overline{R}).$$ The $\overline{R}$-module $\overline{R}^{d_v}\simeq (\overline{R}^e)^{d'_v}$ does not inherit a Galois action but instead a so-called $D$-structure (see Example \ref{quaternionic_rep} for the concrete, non-trivial example where $k=\R$ and $D=\mathbb{H}$) given, for all $\tau\in\Gal_k$, by $$ \Phi_{D,\tau}:  (\overline{R}^e)^{d'_v} \lra (\overline{R}^e)^{d'_v},\quad \big(x_1,\,\ldots\, ,x_{d'_v}\big)  \lmt  \big(a_{D,\tau}\tau(x_1),\,\ldots\, ,a_{D,\tau}\tau(x_{d'_v})\big)$$ where, for all $i\in\{1,\,\ldots\, ,d'_v\}$, we have $x_i\in \overline{R}^e$ and $a_{D,\tau}\in\GL_e(\kb)$, while $\tau\in \Gal_k$ acts component by component. This in turn induces a genuine Galois action on $\Hom_{\cmod(\overline{R})}(\overline{R}^{d_{t(a)}}, \overline{R}^{d_{h(a)}})$, given by $M_a\lmt u_{D,\tau,h(a)} \tau(M_a) u_{D,\tau,t(a)}^{-1}$, where $u_D$ is the $\G(\kb)$-valued modifying family defined in \eqref{direct_sum_modif_family}. In particular, this $\Gal_k$-action on $\Rep(\overline{R})$ coincides with the $\Gal_k$-action $\Phi^{u_D}$ of Proposition \ref{modified_actions_Gal_case}, which concludes the proof.
\end{proof}

We also note that if $D$ lies in the image of $\cT$, then there is a $\Gbar(\kb)$-valued 1-cocycle $\bar{u}$ mapping to $D$ under the connecting homomorphism by Remark \ref{type factors}. In this case, a lift $u=(u_{\tau} \in \G(\kb))_{\tau \in \Gal_k}$ of $\bar{u}$ is a modifying family, which we can use in place of the family $u_D$ given by \eqref{direct_sum_modif_family}, as $[\bar{u}] = [\bar{u}_D] \in H^1(\Gal_k,\Gbar(\kb))$.

\begin{rmk}\label{rmk nonperf field}
For an arbitrary field $k$ and a division algebra $D \in \Br(k)$, one can also construct a $k$-variety $\rep_{Q,d',D}$ (resp. $\mathbf{G}_{Q,d',D}$) representing the functor $\brep_{Q,d',D}$ (resp. $\baut_{Q,d',D}$) by Galois descent for $\Gal(k^s/k)$, where $k^s$ denotes a separable closure of $k$. More precisely, for $d := \ind\,(D)d'$, we use Galois descent for the modified $\Gal(k^s/k)$-action on $\rep_{Q,d,k^s}:= \spec k^s \times_k \Rep$ (resp. $\mathbf{G}_{Q,d,k^s} :=\spec k^s \times_k \G$) given by the family $u_D$ defined in \eqref{direct_sum_modif_family}. Then the above proof can be adapted, once we note that we can still apply Remark \ref{rmk_on_Morita_equiv}, as $\Br(k^s) = 0$. 
\end{rmk}

We now turn to notions of semistability for $D$-representations of $Q$. The slope-type notions of $\theta$-(semi)stability for $k$-representations naturally generalise to $D$-representations (or, in fact, representations of $Q$ in a category of modules), so we do not repeat them here. As the definition of geometric stability over a division algebra is not obvious, we write it out explicitly.

\begin{defn}\label{geometric_stability_for_D_reps} For a central division algebra $D$ of index $e$ over $k$, a $D$-representation $W$ of $Q$ is called $\theta$\textit{-geometrically stable} if, for all field extensions $L/k$, the representation $L\otimes_k W$ is $\theta$-stable as a $D_L$-representation, where $D_L\in\Br(L)$ is the unique central division algebra over $L$ such that $L\otimes_k D\simeq \Mat_{e}(D_L)$.
\end{defn}

We recall that a $k$-representation $W$ is $\theta$-geometrically stable if and only if $\kb \otimes_k W$ is $\theta$-stable. We can now prove that an analogous statement holds for representations over a division algebra $D \in \Br(k)$.

\begin{lemma}\label{lem_geom_s_div_alg}
Let $D$ be a division algebra over a perfect field $k$. Let $d'$ be a dimension vector and set $d:=  \ind\,(D)d'$. Let $W$ be a $d'$-dimensional $D$-representation of $Q$. By Remark \ref{rmk_on_Morita_equiv}, the representation $\kb\otimes_k W$ can be viewed as a $d$-dimensional representation of $Q$ over $\kb$. Then the following statements are equivalent:
\begin{enumerate}
\item $W$ is $\theta$-geometrically stable as a $d'$-dimensional $D$-representation of $Q$. 
\item $\kb \otimes_k W$ is $\theta$-stable as a $d$-dimensional $\kb$-representation of $Q$.
\end{enumerate}
\end{lemma}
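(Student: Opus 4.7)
The implication $(1) \Rightarrow (2)$ is immediate: specialise Definition \ref{geometric_stability_for_D_reps} to $L = \kb$ and note that $D_\kb = \kb$ because $\Br(\kb) = 0$, so the $d$-dimensional $D_\kb$-representation appearing there is precisely the $\kb$-representation in (2).

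For $(2) \Rightarrow (1)$, the plan is to reduce $\theta$-stability of $L \otimes_k W$ as a $D_L$-representation, for an arbitrary field extension $L/k$, to $\theta$-stability of $\ov{L} \otimes_k W$ as a $d$-dimensional $\ov{L}$-representation, where $\ov{L}$ is an algebraic closure of $L$ chosen to contain $\kb$ (which is possible because $\ov{L}$ contains the algebraic closure of $k$). By hypothesis (2) and Corollary \ref{charac_of_geom_stability}, $\kb \otimes_k W$ is $\theta$-geometrically stable as a $d$-dimensional $\kb$-representation, so its scalar extension $\ov{L} \otimes_\kb (\kb \otimes_k W)$ is $\theta$-stable as an $\ov{L}$-representation of dimension $d$. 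Since the Morita functor $e_{11}(-)$ commutes with scalar extension and $\ov{L}$ splits $D$, this $\ov{L}$-representation is canonically identified with the Morita transform of $\ov{L} \otimes_L (L \otimes_k W)$ to the category $\cmod(\ov{L})$.

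Now, given a proper nonzero $D_L$-subrepresentation $U$ of $L \otimes_k W$, I would transport it via Morita to a proper nonzero $(L \otimes_k D)$-subrepresentation of $L \otimes_k W$, extend scalars to $\ov{L}$ to obtain a proper nonzero $\Mat_e(\ov{L})$-subrepresentation of $\ov{L} \otimes_k W$, and apply Morita once more to land inside the $d$-dimensional $\ov{L}$-representation of the previous paragraph. The $\theta$-stability there forces a strict slope inequality, which transfers back to $U$ because slopes are preserved at every step: Morita equivalence multiplies the vector-space dimensions at each vertex by a common positive factor (independent of the subrepresentation), and scalar extension leaves the dimension vector unchanged, so the numerator and denominator of $\mu_\theta$ scale in tandem. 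The main obstacle is the bookkeeping across the two Morita equivalences (over $L$ and over $\ov{L}$) and the scalar extension joining them: one has to verify that a proper nonzero $D_L$-subrepresentation of $L \otimes_k W$ corresponds bijectively, via Morita and scalar extension, to a proper nonzero $\ov{L}$-subrepresentation of the $d$-dimensional $\ov{L}$-representation $\ov{L} \otimes_k W$, and that the $\theta$-slope is invariant under these operations. Once this is done, the conclusion reduces to a single appeal to the geometric stability furnished by (2).
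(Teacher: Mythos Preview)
Your proposal is correct and follows essentially the same line as the paper's proof: both directions hinge on Corollary~\ref{charac_of_geom_stability} (to upgrade $\theta$-stability of $\kb\otimes_k W$ to $\theta$-geometric stability) together with the elementary fact that stability descends along field extensions, transferred to the $D$-setting via Morita. The only organisational difference is that the paper splits the extension $L/k$ into an algebraic piece and a purely transcendental degree~$1$ piece (mirroring Proposition~\ref{sst_and_field_ext}), whereas you go in one step to an algebraic closure $\ov{L}\supset\kb$ and run the Morita/scalar-extension bookkeeping directly; your packaging is arguably cleaner since it avoids the case distinction, while the paper's version makes the parallel with Proposition~\ref{sst_and_field_ext} more visible.
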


\begin{proof} 
By definition of geometric stability, it suffices to show that if $\kb\otimes_k W$ is stable as a $\kb$-representation, then $W$ is geometrically stable. So let $L/k$ be a field extension. As in Proposition \ref{sst_and_field_ext}, it suffices to treat separately the case where $L/k$ is algebraic and the case it is purely transcendental of transcendence degree one. If $L/k$ is algebraic, we can assume that $L\subset \kb$ and we have that $\kb\otimes_L (L\otimes_k W) \simeq (\kb\otimes_k W)$, which is stable, so $L\otimes_k W$ is stable, as in Part (1) of Proposition \ref{sst_and_field_ext}. If $L\simeq k(X)$, let us show that $L\otimes_k W$ is stable as a $D_L$-representation. Since $\kb(X) \otimes_{k(X)} (k(X)\otimes_k W) \simeq \kb(X)\otimes_k W$, by the same argument as earlier it suffices to show that $\kb(X)\otimes_k W$ is stable as a $D_{\kb(X)}$-representation. We have that $\kb(X)\otimes_k W \simeq \kb(X)\otimes_{\kb}(\kb\otimes_k W)$. But since $\kb\otimes_k W$ is stable as a $\kb$-representation by assumption and $\kb$ is algebraically closed, $\kb\otimes_k W$ is geometrically stable by Corollary \ref{charac_of_geom_stability}, so $\kb(X)\otimes_{\kb}(\kb\otimes_k W)$ is stable and the proof is complete.
\end{proof}

We can now give a modular interpretation of our decomposition in Theorem \ref{decomp_thm_gal_intro}.

\begin{thm}\label{fibres div alg}
Let $k$ be a perfect field and $D \in \Br(k)$ be a division algebra in the image of the type map 
$\cT: \Modgs(k) \lra H^2(\Gal_k; \ov{k}^\times) \cong \Br(k)$; thus we have $d = \ind\,(D) d'$ and a modifying family $u_D$ such that $[c_{u_D}] = D \in \Br(k)$. Then
\[ \cT^{-1}(D) \cong {_{u_D}}\Reps(\kb)^{\Gal_k} / {_{u_D}}\G(\kb)^{\Gal_k} \cong \rep_{Q,d',D}^{\theta-gs}(k) / \mathbf{G}_{Q,d',D}(k), \]
where the latter is the set of isomorphism classes of $\theta$-geometrically stable $D$-representations of $Q$ of dimension $d'$.
\end{thm}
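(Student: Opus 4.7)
The plan is to decompose the statement into the two asserted bijections and deduce each from results already in hand. For the first bijection $\cT^{-1}(D) \cong {_{u_D}}\Reps(\kb)^{\Gal_k} / {_{u_D}}\G(\kb)^{\Gal_k}$, I would simply invoke Theorem \ref{fibres_of_the_type_map} with the modifying family $u_D$ defined in \eqref{direct_sum_modif_family}. The cocycle $c_{u_D}$ attached to $u_D$ is built blockwise from the $\kb^\times$-valued cocycle $c_{a_D}$, and since the blocks of $u_D$ are all equal, one sees immediately that $c_{u_D}(\tau_1,\tau_2) = c_{a_D}(\tau_1,\tau_2)\cdot 1_{\Delta(\kb)}$, so $[c_{u_D}] = [c_{a_D}] = D$ in $\Br(k)$, and Theorem \ref{fibres_of_the_type_map} yields exactly the desired bijection.

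For the second bijection, the set-level identifications $\rep_{Q,d',D}(k) \simeq {_{u_D}}\rep_{Q,d}(\kb)^{\Gal_k}$ and $\mathbf{G}_{Q,d',D}(k) \simeq {_{u_D}}\G(\kb)^{\Gal_k}$ are already in Proposition \ref{exists_k_var_for_D_reps}, and the same descent argument there provides the equivariance of the respective actions, so the only remaining task is to match the $\theta$-geometrically stable loci. I would define $\rep_{Q,d',D}^{\theta-gs} \subset \rep_{Q,d',D}$ by Galois descent from the open subscheme $\spec\kb \times_k \Reps \subset \rep_{Q,d,\kb}$; the descent is effective because Proposition \ref{Gal_preserves_GIT_st} together with Proposition \ref{modified_actions_Gal_case} ensures that the modified action $\Phi^{u_D}$ preserves $\Reps(\kb)$. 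To verify that this open subscheme has the correct moduli-theoretic meaning on $k$-points, I would trace a $k$-point of $\rep_{Q,d',D}$ corresponding to a $d'$-dimensional $D$-representation $W$ through the identifications of Proposition \ref{exists_k_var_for_D_reps} and observe that it is sent to $\kb \otimes_k W$, viewed as a $d$-dimensional $\kb$-representation via the Morita equivalence of Remark \ref{rmk_on_Morita_equiv}. Then Lemma \ref{lem_geom_s_div_alg} combined with Proposition \ref{GIT_charac_of_sst_and_geom_st} (or Corollary \ref{charac_of_geom_stability}) gives the chain of equivalences: $W$ is $\theta$-geometrically stable iff $\kb\otimes_k W$ is $\theta$-stable iff the corresponding $\kb$-point lies in $\Reps(\kb)$, which is precisely what is needed.

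The main technical point will be the equivariance carried through the Morita equivalence and the block-diagonal form \eqref{direct_sum_modif_family} of $u_D$; this is really a bookkeeping exercise parallel to the final calculation in the proof of Proposition \ref{exists_k_var_for_D_reps}, so I do not expect it to present genuine difficulty. Once the two bijections are in place, the interpretation of $\rep_{Q,d',D}^{\theta-gs}(k)/\mathbf{G}_{Q,d',D}(k)$ as the set of isomorphism classes of $\theta$-geometrically stable $d'$-dimensional $D$-representations of $Q$ follows from the description of $D$-representation isomorphism as $\mathbf{G}_{Q,d',D}(k)$-orbits already noted in Proposition \ref{exists_k_var_for_D_reps}.
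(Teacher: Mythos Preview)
Your proposal is correct and follows essentially the same approach as the paper: the paper's proof simply says that the first bijection follows from Theorem \ref{fibres_of_the_type_map} and the second from Proposition \ref{exists_k_var_for_D_reps} together with Lemma \ref{lem_geom_s_div_alg}. You have merely unpacked these citations with more care, spelling out why $[c_{u_D}]=D$, how $\rep_{Q,d',D}^{\theta-gs}$ is obtained by descent, and how Lemma \ref{lem_geom_s_div_alg} matches the stable loci --- all of which is sound and faithful to the paper's intent.
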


\begin{proof} 
The first bijection follows from Theorem \ref{fibres_of_the_type_map} and the second one follows from Proposition \ref{exists_k_var_for_D_reps} and Lemma \ref{lem_geom_s_div_alg}.
\end{proof}

\begin{rmk}
For a non-perfect field $k$ with separable closure $k^s$, one should not expect Theorem \ref{fibres div alg} to hold in its current form, because the $k^s$-points of $\Modgs$ do not necessarily correspond to isomorphism classes of $\theta$-geometrically stable $d$-dimensional $k^s$-representations (whereas they do for $k$ perfect, as the geometric points of $\Modgs$ are as expected). This problem is an artefact of $\Modgs$ being constructed as a GIT quotient. 
\end{rmk}

\begin{lemma}\label{stability_over_separably_closed_field_implies_geometric_stability}
Let $k$ be a separably closed field and $W$ be a $\theta$-stable $k$-representation of $Q$; then
\begin{enumerate}
\item $W$ is a simple $k$-representation, and thus $\Aut(W) \cong \GG_m$,
\item $W$ is $\theta$-geometrically stable.
\end{enumerate}
In particular, over a separably closed field, geometric stability and stability coincide.
\end{lemma}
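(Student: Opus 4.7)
The plan is to deduce (1) from Schur's lemma in the abelian category $\cA$ of $\theta$-semistable representations of slope $\mu_\theta(W)$, and then obtain (2) via Corollary \ref{charac_of_geom_stability} together with a descent argument along the (purely inseparable) extension $\kb/k$. For (1), the $\theta$-stable representation $W$ is a simple object in $\cA$ by the general theory reviewed at the start of $\S$\ref{slope_semistability}, and any non-zero endomorphism $\phi:W\lra W$ is automatically an isomorphism because $\ker\phi$ and $\im\phi$ are sub-representations of slope $\leq\mu_\theta(W)$ and the strict inequality in $\theta$-stability forces $\ker\phi=0$ and $\im\phi=W$. Hence $D:=\End(W)$ is a division algebra, finite-dimensional over $k$ via the inclusion $D\hookrightarrow\prod_{v\in V}\End_k(W_v)$. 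For any $d\in D$, the commutative subring $k[d]\subset D$ is an integral domain finite over $k$, hence a finite field extension of $k$, which is trivial because $k=k^s$. Thus $D=k$, so $W$ is simple and $\Aut_R(W_R)\cong(R\otimes_k D)^\times = R^\times$ for every $k$-algebra $R$, giving $\Aut(W)\cong\GG_m$ as $k$-group schemes.

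For (2), Corollary \ref{charac_of_geom_stability} reduces the task to showing that $\kb\otimes_k W$ is $\theta$-stable over $\kb$. By Proposition \ref{sst_and_field_ext}(2), $\kb\otimes_k W$ is already $\theta$-semistable; suppose for contradiction that there exists a proper sub-representation $U\subsetneq \kb\otimes_k W$ with $\mu_\theta(U)=\mu_\theta(W)$. Since $k=k^s$, the extension $\kb/k$ is purely inseparable, and one can apply the Jacobson-descent argument from case (iii) in the proof of Proposition \ref{sst_and_field_ext}(2) to a canonical such $U$ (for instance the sub-representation of minimal dimension, or equivalently the socle of $\kb\otimes_k W$ in the category of $\theta$-semistable $\kb$-representations of slope $\mu_\theta(W)$): this yields a proper $k$-subrepresentation of $W$ of the same slope $\mu_\theta(W)$, contradicting the $\theta$-stability of $W$. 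The concluding sentence of the lemma then follows from part (2) together with the trivial implication geometric stability $\Rightarrow$ stability, obtained by taking $L=k$ in Definition \ref{geom_stability_def}.

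The main obstacle is the canonical, derivation-stable choice of the sub-representation $U$ in (2): unlike the \textit{scss} construction used for semistability, here the would-be destabiliser has the \emph{same} slope as $W$ rather than a strictly larger one, so its uniqueness---needed for invariance under $k$-derivations of $\kb$ and hence for descent---is not automatic and requires a careful minimality or socle argument. A parallel subtlety lurks in the step "finite extension of $k=k^s$ is trivial" in (1), which is cleanest when $k$ is moreover perfect (equivalently, algebraically closed), and one should expect to need that hypothesis in order to exclude purely inseparable extensions $k[d]/k$.
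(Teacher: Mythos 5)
For part (1), you follow the paper up to the Schur-lemma step ($\End(W)$ is a finite-dimensional $k$-division algebra), but diverge in how you conclude $\End(W)=k$. You argue that each $k[d]$ is a finite field extension of $k=k^s$ and hence trivial; the paper instead invokes $\Br(k)=0$. Your self-diagnosis is accurate: a separably closed but imperfect field has nontrivial finite purely inseparable extensions, so your step fails, and the Brauer-group route is the one adapted to $k=k^s$. (Even that route requires some care, since $\Br(k)=0$ a priori only controls the \emph{central} division algebra $\End(W)$ over its centre $Z\supseteq k$, and $Z/k$ could in principle be a nontrivial purely inseparable extension, in which case one would only get $\End(W)=Z$; so the inseparability worry you raise is a real one and is not automatically dispelled just by substituting "$\Br(k)=0$" for "no finite extensions".)

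For part (2), the paper simply cites the quiver analogue of \cite[Lemma 1.5.10]{Huybrechts_Lehn} (a simple stable representation is geometrically stable), whereas you propose to run Jacobson descent directly on a destabilising subrepresentation of $\kb\otimes_k W$, with the socle as the canonical choice. You are right to flag canonicality as the difficulty, but the obstruction is sharper than mere non-uniqueness. The descent step in case (iii) of Proposition \ref{sst_and_field_ext} works because $\Hom(U,(L\otimes_k W)/U)=0$ for the \textit{scss} subrepresentation $U$, a vanishing that is forced by the strict slope inequality $\mu_\theta(U)>\mu_\theta((L\otimes_k W)/U)$. For the socle $S$ of $\kb\otimes_k W$, the slopes of $S$ and of the quotient coincide, so $\Hom(S,(\kb\otimes_k W)/S)$ need not vanish, the induced map $\overline{\psi_\delta}$ need not be zero, and $S$ need not be stable under derivations of $\kb$ over $k$. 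Thus the socle does not automatically descend to $k$, and this part of your argument has a genuine gap that the Huybrechts--Lehn citation in the paper sidesteps.
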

\begin{proof}
As $W$ is $\theta$-stable, it follows that every endomorphism of $W$ is either zero or an isomorphism; thus $\End(W)$ is a division algebra over $k$. As $k$ is separably closed, $\Br(k) = 0$ and so $\End(W)=k$ and $\Aut(W) = \GG_m$. However, for a simple representation, stability and geometric stability coincide (for example, one can prove this by adapting the argument for sheaves in \cite[Lemma 1.5.10]{Huybrechts_Lehn} to quiver representations).
\end{proof}

\noindent Theorem \ref{thm_Galois_div_alg_intro} then follows immediately from Theorems \ref{decomp_thm_gal_intro} and \ref{fibres div alg}. Finally, let us explicitly explain this modular decomposition for the example of $k = \RR$.

\begin{ex}\label{quaternionic_rep}
Let $k=\R$ and let $[c] = -1\in \Br(\R)\simeq\{ 1,-1\} \simeq \{ \RR, \HH \}$. Then a modifying family corresponds to an element $u\in\G(\C) = \prod_{v\in V} \GL_{d_v}(\C)$ such that, for all $v\in V$, $u_v \overline{u_v} = -I_{d_v}$, implying that $|\det u_v|^2 = (-1)^{d_v}$, which can only happen if $d_v = 2 d'_v$ is even for all $v\in V$. We then have a quaternionic structure on each $\C^{d_v} \cong \HH^{d'_v}$, given by $x\lmt u_v \overline{x}$ and a modified $\Gal_{\R}$-action on $\Rep(\C)$, given by $(M_a)_{a\in A} \lmt u_{h(a)} \overline{M_a} u_{t(a)}^{-1}$. The fixed points of this involution are those $(M_a)_{a\in A}$ satisfying $u_{h(a)} \overline{M_a} u_{t(a)}^{-1}=M_a$, i.e.\ those $\C$-linear maps $M_a: W_{t(a)} \lra W_{h(a)}$ that commute with the quaternionic structures defined above, and thus are $\HH$-linear. The subgroup of $\G(\C) = \prod_{v\in V} \GL_{d_v}(\C)$ consisting, for each $v\in V$, of automorphisms of the quaternionic structure of $\C^{d_v}$ is the real Lie group $\G(\C)^{(\Gal_{\R},u)} = \prod_{v\in V} \mathbf{U}^*(d_v)$, where $\mathbf{U}^*(2n) = \GL_n(\HH)$. Hence, the fibre $\cT^{-1}(-1)$ of the type map is in bijection with the set of isomorphism classes of $\theta$-geometrically stable quaternionic representations of $Q$ of dimension $d'$.
\end{ex}

\section{Gerbes and twisted quiver representations}\label{sec_gerbes_and_twisted_reps}

\subsection{An interpretation of the type map via gerbes}\label{sec gerbes}

In this section, we give an alternative description of the type map using $\GG_m$-gerbes that works over any field $k$. The following result collects the relevant results that we will need on gerbes and torsors; for further details, see \cite[Chapter 12]{olsson}.

\begin{prop}\label{prop gerbes}
Let $\fX$ be an Artin stack over $k$ and let $G, G'$ and $G''$ be affine algebraic group schemes over $k$; then the following statements hold.
\begin{enumerate}
\item \cite[Corollary 12.1.5]{olsson} There is a natural bijection
\[ H^1_{\et}(\fX,G) \simeq \{ \text{isomorphisms classes of } G \text{-torsors over } \fX \}. \]
\item \cite[12.2.8]{olsson} For $G$ commutative, there is a natural bijection
\[ H^2_{\et}(\fX,G) \simeq \{ \text{isomorphisms classes of } G \text{-gerbes over } \fX \}. \]
\item \cite[Lemma 12.3.9]{olsson} A short exact sequence $1 \lra G' \lra G \lra G'' \lra 1$ with $G'$ commutative and central in $G$ induces an exact sequence
\[\xymatrix@1{ H^1_{\et}(\fX,G')  \ar[r] & H^1_{\et}(\fX,G)  \ar[r] & H^1_{\et}(\fX,G'')  \ar[r]^{\delta} & H^2_{\et}(\fX,G') .& }\] 
Moreover, the isomorphism class of a $G''$-torsor $\cP \lra \fX$, such that $\cP$ is representable by a $k$-scheme, has image under $\delta$ given by the class of the $G'$-gerbe $\cG_{G}(\cP)$ of liftings of $\cP$ to $G$ (\textit{cf}.\ Definition \ref{gerbe of liftings}).
\end{enumerate}
\end{prop}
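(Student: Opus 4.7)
The plan is to verify each of the three statements in turn, largely by invoking the standard \v{C}ech-theoretic dictionary between low-degree non-abelian cohomology and torsors/gerbes, and by computing the connecting map directly. Since all three assertions are proved in detail in \cite[Chapter 12]{olsson}, I would be content with sketches that make the geometric content transparent and then cite \emph{loc.\ cit.}\ for the technical verifications.

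For Part (1), the bijection goes via the classical equivalence: a $G$-torsor $\cP \lra \fX$ trivialises on some \'etale cover $\{U_i \lra \fX\}$, and the transition data on double overlaps yields a \v{C}ech $1$-cocycle with values in $G$; conversely, any such cocycle determines a torsor by \'etale descent, and isomorphism on either side matches a coboundary on the other. For Part (2) with $G$ commutative, I would use that a $G$-gerbe $\mathfrak{G} \lra \fX$ admits \'etale-local sections, after which it becomes equivalent to $BG$ locally. Comparing two such trivialisations on double overlaps yields a $G$-torsor (hence a class in $H^1$ by Part (1)), and the failure of these torsors to glue on triple overlaps yields a $2$-cocycle with values in $G$. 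Commutativity of $G$ is exactly what ensures that this class is well-defined up to coboundary.

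For Part (3), the exactness of the low-degree sequence in non-abelian cohomology is formal from the centrality of $G'$ in $G$. The substantive content is the modular description of the boundary map $\delta$. By construction, the gerbe of liftings $\cG_G(\cP)$ sends any \'etale $T \lra \fX$ to the groupoid of pairs $(\cQ, \phi)$, where $\cQ$ is a $G$-torsor on $T$ and $\phi$ is an isomorphism of $G''$-torsors between $\cQ \times^G G''$ and $\cP|_T$; the sheaf of automorphisms of such a lifting is precisely $G'$, so $\cG_G(\cP)$ is a $G'$-gerbe (and representability of $\cP$ as a scheme is what keeps the construction inside the Artin-stack framework). To match $[\cG_G(\cP)]$ with $\delta([\cP])$, I would trivialise $\cP$ on an \'etale cover, extract the resulting $G''$-valued $1$-cocycle $(\overline{g}_{ij})$, choose any lift $g_{ij} \in G$, and note that the defect $g_{ij} g_{jk} g_{ik}^{-1}$ lies in $G'$ and is simultaneously a \v{C}ech representative of $\delta([\cP])$ (by the very definition of the connecting map) and of the class of $\cG_G(\cP)$ (as it records the obstruction to choosing compatible local lifts of $\cP$ to a $G$-torsor).

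The main obstacle is not any single calculation but the bookkeeping needed to reconcile the three identifications: one must set up the \v{C}ech-to-torsor and \v{C}ech-to-gerbe dictionaries in Parts (1) and (2) consistently, so that the explicit formula for $\delta$ on cocycles in Part (3) matches the stack-theoretic class of the gerbe of liftings on the nose. Once these conventions are fixed, each assertion reduces to what is recorded in the cited results.
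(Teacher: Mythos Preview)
Your proposal is correct and takes essentially the same approach as the paper: the paper does not prove this proposition at all, treating it purely as a collection of background results with citations to \cite[Chapter 12]{olsson}. Your sketches of the \v{C}ech-theoretic arguments are accurate and go further than the paper, which simply invokes the references without elaboration.
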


\begin{defn}\label{gerbe of liftings}
For a short exact sequence $1 \lra G' \lra G \lra G'' \lra 1$ of affine algebraic groups schemes over $k$ with $G'$ abelian and a principal $G''$-bundle $\cP$ over an Artin stack $\fX$ over $k$, the gerbe $\cG_{G}(\cP)$ of liftings of $\cP$ to $G$ is the $G'$-gerbe over $\fX$ whose groupoid over $S \lra \fX$, for a $k$-scheme $S$, has objects given by pairs $(\cQ, f: \cQ \lra \cP_S)$ consisting of a principal $G$-bundle $\cQ$ over $S$ and an $S$-morphism $f : \cQ \lra \cP_S:= \cP \times_{\fX} S$ which is equivariant with respect to the homomorphism $G \lra G''$. An isomorphism between two objects $(\cQ,f)$ and $(\cQ',f')$ over $S$ is an isomorphism $\varphi : \cQ \lra \cQ'$ of $G$-bundles such that $f = f' \circ \varphi$.
\end{defn}

Let us now turn our attention to quiver representations and consider the stack of $d$-dimensional representations of $Q$ over an arbitrary field $k$, which is the quotient stack 
\[ \fM_{Q,d}=[\Rep/\G].\] 
Since $\Gbar = \G/\Delta $ and the group $\Delta \cong \GG_m$ acts trivially on $\Rep$, the natural morphism
\[ \pi: \fM_{Q,d}=[\Rep/\G] \lra \fX:= [\Rep/\Gbar] \]
is a $\GG_m$-gerbe. If we restrict this gerbe to the $\theta$-geometrically stable locus, then the base is a scheme rather than a stack, namely the moduli space of $\theta$-geometrically stable representations of $Q$
\[\pi^{\theta-gs}: \fM_{Q,d}^{\theta-gs} := [\Rep^{\theta-gs}/\G] \lra \Modgs = [\Rep^{\theta-gs}/\Gbar]. \]
The Brauer group $\Br(k)$ can also be viewed as the set of isomorphism classes of $\GG_m$-gerbes over $\spec k$, by using Proposition \ref{prop gerbes} and the isomorphism $H^2(\Gal_k, \kb^\times) \cong H^2_{\et}(\spec k,\GG_m)$ given by Grothendieck's Galois theory. 
By pulling back the $\GG_m$-gerbe $\pi$ along a point $r : \spec k \lra \fX$, we obtain a $\GG_m$-gerbe $\cG_r := r^*\fM_{Q,d} \lra \spec k$. This defines a morphism
\begin{equation}\label{def_G}
\cG : \fX(k) \lra H^2_{\et}(\spec k, \GG_m)\cong \Br(k), 
\end{equation}
whose restriction to the $\theta$-geometrically stable locus, we denote by
\[ \cG^{\theta-gs} :\Modgs(k) \lra H^2_{\et}(\spec k, \GG_m)\cong \Br(k).\]

In Corollary \ref{two_type_maps_agree}, we will show that $\cG^{\theta-gs}$ coincides with the type map 
\[\cT: \Modgs(k) \lra H^2(\Gal_k, \Delta(\kb))\cong \Br(k)\]
constructed above. In order to compare the above morphism $\cG^{\theta-gs}$ with the type map $\cT$, we recall from Remark \ref{type factors} that the type map factors as $\cT = \delta \circ \cT'$; that is,
\[ \xymatrix@1{\cT: \Modgs(k) \ar[r]^{\cT' \quad} & H^1(\Gal_k,\Gbar(\kb)) \ar[r]^{\delta} & H^2(\Gal_k, \Delta(\kb)) } \]
for the connecting homomorphism $\delta$ associated to the short exact sequence $$1 \lra \Delta \lra \G \lra \Gbar \lra 1.$$ We will also refer to $\cT'$ as the type map. 

Let us describe a similar factorisation of $\cG$. The morphism $p: \Rep \lra \fX=[\Rep/\Gbar]$ is a principal $\Gbar$-bundle 
and determines a map
\begin{equation}\label{def_P} 
\cP:   \fX(k)  \lra  H^1_{\et}( \spec k, \Gbar) ,\quad  r  \longmapsto  [\cP_r], 
\end{equation}
where $\cP_r$ is the $\Gbar$-bundle $\cP_r:=r^*\Rep \lra \spec k$. We denote the restriction of $\cP$ to the $\theta$-geometrically stable subset by
\[  \cP^{\theta-gs}:   \Modgs(k) \lra  H^1_{\et}( \spec k, \Gbar). \]

By a slight abuse of notation, we will use $\delta$ to denote both the connecting maps 
\[ \delta: H^1_{\text{\'{e}t}}( \spec k, \Gbar) \lra H^2_{\text{\'{e}t}}( \spec k, \GG_m) \]
and
\[ \delta: H^1_{\et}( \fX, \Gbar) \lra H^2_{\et}( \fX, \GG_m)  \]
in \'{e}tale cohomology given by the exact sequence $1 \lra \Delta \lra \G \lra \Gbar \lra 1$. 

\begin{lemma}\label{delta universally}
The $\GG_m$-gerbe $\cG_{\G}(\Rep) \lra \fX$ of liftings of the principal $\Gbar$-bundle $p:  \Rep \lra \fX$ to $\G$ is equal to $\fM_{Q,d} \lra \fX$. In particular, we have
\[ \delta([\Rep]) = [\fM_{Q,d}]. \]
\end{lemma}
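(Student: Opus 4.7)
The plan is to construct a direct equivalence $\fM_{Q,d} \simeq \cG_\G(\Rep)$ of $\GG_m$-gerbes over $\fX$, after which the identity $\delta([\Rep]) = [\fM_{Q,d}]$ follows immediately from Proposition \ref{prop gerbes}(3), applied with $G' = \Delta \cong \GG_m$, $G = \G$ and $G'' = \Gbar$: that proposition identifies $\delta([\Rep])$ with $[\cG_\G(\Rep)]$. I would establish the equivalence by comparing the groupoids of $S$-points of both stacks over each test $k$-scheme $S$ equipped with a morphism $r : S \to \fX$.

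First I would unpack the definitions. Such an $r$ corresponds, by the presentation of the quotient stack, to the $\Gbar$-torsor $\cP_r := r^*\Rep \to S$ together with the pulled-back $\Gbar$-equivariant map $\cP_r \to \Rep$. An $S$-point of $\fM_{Q,d} \times_\fX S$ is then a triple $(\cQ, \psi, \alpha)$ consisting of a $\G$-torsor $\cQ \to S$, a $\G$-equivariant morphism $\psi : \cQ \to \Rep$, and a $2$-isomorphism $\alpha$ identifying the image of $(\cQ, \psi)$ under $\pi$, namely the pair $(\cQ \times^\G \Gbar, \bar\psi)$ --- where $\bar\psi$ is the factorisation of $\psi$ through the quotient, which exists because $\Delta$ acts trivially on $\Rep$ --- with the pair $(\cP_r, \cP_r \to \Rep)$. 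Concretely, $\alpha$ is an isomorphism of $\Gbar$-torsors $\cQ \times^\G \Gbar \to \cP_r$ intertwining the equivariant maps into $\Rep$. By Definition \ref{gerbe of liftings}, an $S$-point of $\cG_\G(\Rep)$ lying over $r$ is, on the other hand, a pair $(\cQ, f : \cQ \to \cP_r)$ in which $f$ is equivariant with respect to the quotient $\G \to \Gbar$.

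The equivalence is then given by mutually inverse natural functors. Send $(\cQ, \psi, \alpha)$ to $(\cQ, f)$, where $f$ is the composition $\cQ \to \cQ \times^\G \Gbar \to \cP_r$ whose second arrow is $\alpha$. Conversely, send $(\cQ, f)$ to $(\cQ, \psi, \alpha)$, where $\psi := (\cP_r \to \Rep) \circ f$ and $\alpha$ is the induced map $\cQ \times^\G \Gbar \to \cP_r$, automatically an isomorphism since any morphism of $\Gbar$-torsors on $S$ is one. These assignments are inverse up to canonical $2$-isomorphism, natural in $S$, and preserve the automorphism groups of objects --- which on both sides are given by the scalar $\Delta \cong \GG_m$-action on $\cQ$ --- so they define an equivalence of $\GG_m$-banded gerbes over $\fX$. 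The only real delicacy is the $2$-categorical bookkeeping of $\alpha$ and the banding; no substantial geometric input beyond the quotient-stack descriptions of $\fX$ and $\fM_{Q,d}$ is required.
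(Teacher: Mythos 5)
Your argument is correct and follows essentially the same route as the paper: both proofs construct mutually inverse functors between the groupoids of $S$-points of $\fM_{Q,d}$ and $\cG_{\G}(\Rep)$ lying over a fixed $r : S \to \fX$, using the quotient-stack presentations, the universal property of $\mathscr{P}_S = \Rep \times_{\fX} S$, and the key fact that $\Delta$ acts trivially on $\Rep$ to pass between $\G$-equivariance and $(\G\to\Gbar)$-equivariance. You are slightly more explicit than the paper about carrying the $2$-isomorphism $\alpha$ identifying the image in $\fX(S)$ with $r$, which the paper elides by invoking that $\mathscr{P}$, $S$, and $\mathscr{P}_S$ are all schemes; the mathematical content is the same.
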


\begin{proof}
Let us write $ \mathscr{P}:= \Rep \lra \fX$ and $\cG := \cG_{\G}( \mathscr{P})$; then we will construct  isomorphisms
\[ \alpha: \cG  \rightleftarrows \fM_{Q,d} : \beta \]
of stacks over $\fX$. First, we recall that $ \fM_{Q,d} = [\mathscr{P} /\G]$ is a quotient stack, and so, for a $k$-scheme $S$, its $S$-valued points are pairs $(\cQ, h : \cQ \lra  \mathscr{P})$ consisting of a principal $\G$-bundle $\cQ$ over $S$ and a $\G$-equivariant morphism $h$.

Let $S \lra \fX$ be a morphism from a scheme $S$; then we define the functor $\alpha_S :  \cG(S) \lra \fM_{Q,d}(S)$ as follows. For an object $(\cQ,f: \cQ \lra  \mathscr{P}_S) \in  \cG(S)$, we can construct a morphism $h: \cQ \lra  \mathscr{P}$ as the composition of $f$ with the projection $ \mathscr{P}_S \lra  \mathscr{P}$. As $f$ is equivariant with respect to $\G \lra \Gbar$ and $\Delta$ acts trivially on $ \mathscr{P}$, it follows that $h$ is $\G$-equivariant. Thus $\alpha_S(\cQ,f):=(\cQ,h) \in  \fM_{Q,d}(S)$. Since the isomorphisms on both sides are given by isomorphisms of $\G$-bundles over $S$ satisfying the appropriate commutativity properties, it is clear how to define $\alpha_S$ on isomorphisms. Conversely, to define $\beta_S$, we take an object $(\cQ, h : \cQ \lra  \mathscr{P}) \in \fM_{Q,d}(S)$ given by a $\G$-bundle $\cQ$ over $S$ and a $\G$-equivariant map $h$. By the universal property of the fibre product $ \mathscr{P}_S= \mathscr{P} \times_{\fX} S$, a morphism $h: \cQ \lra  \mathscr{P}$ is equivalent to a $S$-morphism $f: \cQ \lra  \mathscr{P}_S$, where here we use the fact that $ \mathscr{P}, S$ and $\mathscr{P}_S$ are all $k$-schemes, so that this $S$-morphism is unique. Since $\Delta$ acts trivially $\mathscr{P}= \Rep$, the $\G$-equivariance of $h$ is equivalent to $h$ being equivariant with respect to the homomorphism $\G \lra \Gbar$; thus $f$ is also equivariant for this homomorphism. Hence $\beta_S(\cQ,h) := (\cQ,f) \in \cG(S)$. From their constructions, it is clear that $\alpha$ and $\beta$ are inverses.

The final statement follows from Proposition \ref{prop gerbes}.
\end{proof}

\begin{cor}\label{G factors}
The following triangle commutes
\[ \xymatrixcolsep{0.2pc}\xymatrix@1{  [\Rep/\Gbar](k) \ar[rr]^{\cG} \ar[rd]_{\cP} & & H^2_{\et}( \spec k, \GG_m). \\ 
& H^1_{\et}( \spec k, \Gbar ).  \ar[ru]_{\delta} }\] 
\end{cor}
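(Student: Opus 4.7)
The plan is to deduce the commutativity of the triangle from Lemma~\ref{delta universally} by invoking the naturality of the connecting homomorphism $\delta$ with respect to pullback along $r : \spec k \lra \fX$. Concretely, given $r \in \fX(k) = [\Rep/\Gbar](k)$, the class $\cP(r) \in H^1_{\et}(\spec k, \Gbar)$ is by definition $r^\ast [\Rep]$, where $[\Rep] \in H^1_{\et}(\fX, \Gbar)$ is the class of the tautological $\Gbar$-bundle $p : \Rep \lra \fX$. Likewise, $\cG(r) = r^\ast [\fM_{Q,d}]$ in $H^2_{\et}(\spec k, \GG_m)$.

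The main step is to observe that the connecting map $\delta$ in \'etale cohomology is natural in the base: pullback along $r$ fits into a commutative square
\[
\xymatrix{
H^1_{\et}(\fX, \Gbar) \ar[r]^{\delta} \ar[d]_{r^\ast} & H^2_{\et}(\fX, \GG_m) \ar[d]^{r^\ast} \\
H^1_{\et}(\spec k, \Gbar) \ar[r]^{\delta} & H^2_{\et}(\spec k, \GG_m).
}
\]
Combining this with Lemma~\ref{delta universally}, which identifies $\delta([\Rep]) = [\fM_{Q,d}]$ in $H^2_{\et}(\fX, \GG_m)$, gives
\[
\delta(\cP(r)) \;=\; \delta(r^\ast [\Rep]) \;=\; r^\ast \delta([\Rep]) \;=\; r^\ast [\fM_{Q,d}] \;=\; \cG(r),
\]
which is the desired equality $\cG = \delta \circ \cP$.

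If one prefers to avoid quoting naturality as a black box, an equivalent route is to argue at the level of gerbes: the pullback $r^\ast \cG_{\G}(\Rep)$ of the gerbe of liftings along any morphism $r$ is canonically isomorphic to the gerbe $\cG_{\G}(r^\ast \Rep) = \cG_{\G}(\cP_r)$ of liftings of the pulled-back $\Gbar$-bundle, by unwinding Definition~\ref{gerbe of liftings} and using that the formation of principal bundles and equivariant morphisms is stable under base change. Lemma~\ref{delta universally} identifies $\cG_{\G}(\Rep)$ with $\fM_{Q,d}$, so the pullback $\cG_r = r^\ast \fM_{Q,d}$ is isomorphic to $\cG_{\G}(\cP_r)$, and the latter represents $\delta([\cP_r]) = \delta(\cP(r))$ by Proposition~\ref{prop gerbes}(3).

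There is no substantial obstacle here beyond being careful about naturality of the connecting map; the real content has already been placed in Lemma~\ref{delta universally}, and the corollary is a functoriality statement. The only point worth verifying is that $r^\ast \Rep$ (as a $k$-scheme) satisfies the hypothesis in Proposition~\ref{prop gerbes}(3) that the $\Gbar$-torsor be representable, which is immediate since $\Rep$ is an affine $k$-scheme.
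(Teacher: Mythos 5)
Your proposal is correct and follows the same route as the paper: the paper's proof simply observes that $\cG$ and $\cP$ are obtained by pointwise pullback of $\pi:\fM_{Q,d}\lra\fX$ and $p:\Rep\lra\fX$, so the claim ``follows immediately from Lemma \ref{delta universally}'' together with the (tacitly used) compatibility of the connecting map $\delta$ with pullback along $r:\spec k\lra\fX$. You spell out this naturality step explicitly, and your alternative gerbe-level argument and the remark about representability of $r^*\Rep$ are correct elaborations of what the paper leaves implicit.
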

\begin{proof}
Since $\cG$ (resp.\ $\cP$) is defined by pointwise pulling back the $\GG_m$-gerbe $\pi : \fM_{Q,d} \lra \fX$ (resp.\ the $\Gbar$-bundle $p: \Rep \lra \fX$), this follows immediately from Lemma \ref{delta universally}.
\end{proof}

Consequently, it will suffice to compare the maps $\cP^{\theta-gs}$ and $\cT'$. Let us explicitly describe the \v{C}ech cocycle representing $[\cP_r]$ for $r \in \fX(k)$. We pick a finite separable extension $L/k$ such that $(\cP_r)_L \lra \spec L$ is a trivial $\Gbar$-bundle; that is, it admits a section $\sigma \in  \cP_r(L) \subset \Rep(L)$, which corresponds to a  $L$-representation $W_{r}$ of $Q$. Over $\spec(L \otimes_k L)$, the transition functions determine a cocycle $\varphi \in \Gbar(L \otimes_k L)$ such that $p_1^*\sigma =  \varphi \cdot p_2^*\sigma$ in $\cP_r(L \otimes_k L)$.  Then $\varphi$ is a \v{C}ech cocycle whose cohomology class in $H^1_{\et}(\spec k, \Gbar)$ represents the $\Gbar$-torsor $\cP_r$.

For $k$ perfect, let us recall the relationship between \'{e}tale cohomology and Galois cohomology given by Grothendieck's generalised Galois theory (\textit{cf.}\ \cite[Tag 03QQ]{stacks-project}). For all finite Galois extensions $L/k$, the isomorphisms
\[  h : \Gal_{L/k} \times \spec L  \lra  \spec L \times_{\spec k} \spec L,\quad (\tau,s)  \longmapsto h_{\tau}(s):=(s,\tau^*(s)) \]
induce isomorphisms $\gamma :  H^i( \Gal_k, G(\kb)) \cong H^i_{\et}(\spec k, G)$ for $i =1$ and any affine group scheme $G$ over $k$, and for $i=2$ and $G/k$ a commutative group scheme.

\begin{prop}\label{compare P and T dash}
Let $k$ be a perfect field; then the type map $\cT' :  \Modgs(k)  \lra  H^1( \Gal_k, \Gbar(\kb))$ agrees with the map $\cP^{\theta-gs}:  \Modgs(k)  \lra  H^1_{\et}(\spec k, \Gbar)$ under the isomorphism $H^1_{\et}(\spec k, \Gbar) \cong H^1( \Gal_k, \Gbar(\kb))$. 
\end{prop}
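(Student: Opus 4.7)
The plan is to evaluate both cocycle constructions on a common representative of the orbit associated to $r \in \Modgs(k)$ and check that they literally coincide as $\Gbar(\kb)$-valued 1-cocycles.

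First, I would choose a finite Galois extension $L/k$ over which the $\Gbar$-torsor $\cP_r = r^*\Rep$ becomes trivial, and pick a section $\sigma \in \cP_r(L) \subset \Rep^{\theta-gs}(L)$, which corresponds to an $L$-rational representative $M_\sigma \in \Rep^{\theta-gs}(L)$ of the orbit parametrised by $r$. By the description preceding the proposition, the \v{C}ech transition cocycle $\varphi \in \Gbar(L \otimes_k L)$ is then determined by the relation $p_1^*\sigma = \varphi \cdot p_2^*\sigma$ in $\cP_r(L \otimes_k L) \subset \Rep^{\theta-gs}(L \otimes_k L)$.

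Next, I would use Grothendieck's isomorphism $h : \Gal_{L/k} \times \spec L \xrightarrow{\sim} \spec L \times_{\spec k} \spec L$, $h_\tau(s) = (s, \tau^*(s))$. Since $p_1 \circ h_\tau = \mathrm{id}_{\spec L}$ and $p_2 \circ h_\tau = \tau^*$, pulling back the defining relation of $\varphi$ along $h_\tau$ yields the equation $M_\sigma = \varphi_\tau \cdot \Phi_\tau(M_\sigma)$ in $\Rep^{\theta-gs}(L)$, where $\varphi_\tau := h_\tau^*\varphi \in \Gbar(L)$ and $\Phi_\tau$ is the Galois action from \eqref{Gal_action_on_Rep}. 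Passing to the colimit over finite Galois extensions $L/k$, this produces a Galois 1-cocycle $\varphi_{\bullet} : \Gal_k \lra \Gbar(\kb)$ whose class is identified with $[\cP_r] = \cP^{\theta-gs}(r)$ under the comparison isomorphism $\gamma : H^1(\Gal_k, \Gbar(\kb)) \cong H^1_{\et}(\spec k, \Gbar)$.

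Finally, I would compare with the type map. By the definition of $\cT'$ recalled in Remark \ref{type factors}, one has $\cT'(r) = [\bar{u}]$, where $u_\tau \in \G(\kb)$ is any choice satisfying $u_\tau \cdot \Phi_\tau(M) = M$ for a representative $M$ of the orbit and $\bar{u}_\tau$ denotes the image of $u_\tau$ in $\Gbar(\kb)$. Taking the same representative $M = M_\sigma$, both $\bar{u}_\tau$ and $\varphi_\tau$ are elements of $\Gbar(\kb)$ satisfying the equation $h \cdot \Phi_\tau(M) = M$. Because $M$ is $\theta$-geometrically stable, Corollary \ref{stabiliser_of_GIT_stable_points} gives that its stabiliser in $\G(\kb)$ is $\Delta(\kb)$, hence its stabiliser in $\Gbar(\kb) = \G(\kb)/\Delta(\kb)$ is trivial and the equation has a unique solution. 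Therefore $\bar{u}_\tau = \varphi_\tau$ for every $\tau \in \Gal_k$, so $\cT'(r)$ and $\cP^{\theta-gs}(r)$ correspond to one another under $\gamma$. The main obstacle I anticipate is the bookkeeping around Grothendieck's isomorphism: verifying that pulling back the section $\sigma$ along $p_2 \circ h_\tau = \tau^*$ recovers $\Phi_\tau(M_\sigma)$ rather than $\Phi_{\tau^{-1}}(M_\sigma)$, and that the left \v{C}ech convention used to define $\varphi$ matches the convention used in the definition of $\bar{u}_\tau$. Once these conventions are aligned, the two cocycles agree on the nose.
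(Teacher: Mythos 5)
Your proposal is correct and follows essentially the same route as the paper: trivialise $\cP_r$ over a finite Galois extension, pull back the \v{C}ech relation $p_1^*\sigma = \varphi\cdot p_2^*\sigma$ along $h_\tau$ to obtain $M_\sigma = \varphi_\tau\cdot\Phi_\tau(M_\sigma)$, and recognise this as the defining relation for $\cT'$. The one small refinement you add — that the stabiliser of a $\theta$-geometrically stable point in $\Gbar(\kb)$ is trivial by Corollary \ref{stabiliser_of_GIT_stable_points}, forcing $\bar u_\tau = \varphi_\tau$ on the nose — is a clean way to conclude; the paper instead invokes the independence-of-choices established in Remark \ref{type factors}, but the substance is the same.
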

\begin{proof}
Let $r \in  \Modgs(k)$; then the $\Gbar$-bundle $\cP_r:=r^*\Rep \lra \spec k$ trivialises over some finite separable extension $L/k$ as above, and we can assume that $L/k$ is a finite Galois extension, by embedding $L/k$ in a Galois extension if necessary. Then there is a section $\sigma \in  \cP_r(L) \subset \Rep(L)$ corresponding to a  $L$-representation $W$ of $Q$, and the transition maps are encoded by a cocycle  $\varphi \in \Gbar(L \otimes_k L)$ such that $p_1^*\sigma =  \varphi \cdot p_2^*\sigma$. Under the isomorphism $\gamma$, the cocycle $\varphi$ is sent to a $1$-cocycle $u_L: \Gal_{L/k} \lra \Gbar(L)$ such that, for $\tau_L \in \Gal_{L/k}$, we have
\[ h_{\tau_L}^* \varphi = u_{L,\tau_L} \in \Gbar(L), \]
for the morphism $h_{\tau_L} : \spec L \lra \spec L \times_k \spec L$ described above. Furthermore, by pulling back the equality $p_1^*\sigma =\varphi \cdot p_2^*\sigma $ along $h_{\tau_L}$, for each $\tau_L \in \Gal_{L/k}$, we obtain an equality
\[ W = u_{L,\tau_L} \cdot \tau_L(W) \]
for all $\tau_L \in \Gal(L/k)$. Hence, the orbit $\Gbar(L) \cdot W$ is $\Gal_{L/k}$-fixed. 

By precomposing the $1$-cocycle $u_L: \Gal_{L/k} \lra \Gbar(L)$ with the homomorphism $\Gal_k \lra \Gal_{L/k}$ and postcomposing with the inclusion $\Gbar(L) \hookrightarrow \Gbar(\kb)$, we obtain a new $1$-cocycle
\[ u: \Gal_{k} \lra \Gbar(\kb). \]
For $\tau \in  \Gal_{k}$, we let $\tau_L$ denote the image of $\tau$ under $\Gal_k \lra \Gal_{L/k}$. Then 
\[  u_\tau \cdot \tau (W \otimes_L \kb) = u_\tau \cdot (\tau_L(W) \otimes_L \kb) = (u_{L,\tau_L} \cdot \tau_L(W)) \otimes_L \kb = W \otimes_L \kb. \]
Thus $\Gbar(\kb) \cdot (W \otimes_L \kb) \in \Modgs(\kb)^{\Gal_k}$ and this Galois fixed orbit corresponds to the $k$-rational point $r \in \Modgs(k)$. Moreover, by construction of $\cT'$, we have $\cT'(r) = [u]$ (\textit{cf.}\ Remark \ref{type factors}).
\end{proof}

\begin{cor}\label{two_type_maps_agree}
Under the isomorphism $H^2_{\et}(\spec k, \GG_m) \cong H^2( \Gal_k, \GG_m(\kb))$, the type map for a perfect field $k$
\[ \cT :\Modgs(k) \lra H^2(\Gal_k, \GG_m(\kb))\cong \Br(k) \]
coincides with the map 
\[ \cG^{\theta-gs} :\Modgs(k) \lra H^2(\spec k, \GG_m)\cong \Br(k)\]
determined by the $\GG_m$-gerbe $\pi^{\theta-gs}: \fM_{Q,d}^{\theta-gs}\lra \Modgs$.
\end{cor}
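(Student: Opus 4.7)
The plan is to assemble the two factorizations of $\cT$ and $\cG^{\theta-gs}$ already established and reduce to invoking Proposition \ref{compare P and T dash}, after checking that the comparison between Galois and \'etale cohomology is compatible with the connecting homomorphism attached to $1 \to \Delta \to \G \to \Gbar \to 1$.

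First, I would line up the ingredients. By Remark \ref{type factors} the Galois-theoretic type map factors as
\[ \cT = \delta_{\mathrm{Gal}} \circ \cT' : \Modgs(k) \lra H^1(\Gal_k,\Gbar(\kb)) \lra H^2(\Gal_k,\Delta(\kb)), \]
while Corollary \ref{G factors} gives the analogous factorization
\[ \cG^{\theta-gs} = \delta_{\et} \circ \cP^{\theta-gs} : \Modgs(k) \lra H^1_{\et}(\spec k,\Gbar) \lra H^2_{\et}(\spec k, \GG_m), \]
in which $\delta_{\mathrm{Gal}}$ and $\delta_{\et}$ are the connecting maps associated to the same short exact sequence $1\to\Delta\to\G\to\Gbar\to 1$ in Galois and \'etale cohomology respectively. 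Finally, Proposition \ref{compare P and T dash} identifies $\cT'$ with $\cP^{\theta-gs}$ through the Grothendieck comparison isomorphism $\gamma_1 : H^1(\Gal_k,\Gbar(\kb)) \xrightarrow{\sim} H^1_{\et}(\spec k,\Gbar)$.

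The remaining task is to verify the naturality square $\delta_{\et} \circ \gamma_1 = \gamma_2 \circ \delta_{\mathrm{Gal}}$, where $\gamma_2 : H^2(\Gal_k,\GG_m(\kb)) \xrightarrow{\sim} H^2_{\et}(\spec k, \GG_m)$. This is a formal consequence of Grothendieck's generalized Galois theory: the equivalence between \'etale sheaves on $\spec k$ and discrete $\Gal_k$-sets sends the short exact sequence of sheaves of groups $1\to\Delta\to\G\to\Gbar\to 1$ to the corresponding short exact sequence of $\Gal_k$-modules on $\kb$-points, and connecting homomorphisms are natural under equivalences of abelian categories. Alternatively, both $\delta_{\mathrm{Gal}}$ and $\delta_{\et}$ admit the same explicit \v{C}ech description: a $1$-cocycle $\bar u:\Gal_k\to\Gbar(\kb)$ is lifted to $u:\Gal_k\to\G(\kb)$ (after possibly passing to a finite Galois refinement), and $(\tau_1,\tau_2)\mapsto u_{\tau_1}\tau_1(u_{\tau_2})u_{\tau_1\tau_2}^{-1}$ represents the image in both targets.

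Combining these three ingredients yields
\[ \gamma_2 \circ \cT \;=\; \gamma_2 \circ \delta_{\mathrm{Gal}} \circ \cT' \;=\; \delta_{\et} \circ \gamma_1 \circ \cT' \;=\; \delta_{\et} \circ \cP^{\theta-gs} \;=\; \cG^{\theta-gs}, \]
which is the desired identification. The only non-routine point is the naturality of $\delta$ with respect to $\gamma$; I expect this to be the main obstacle, but it is standard and can either be cited from the literature on comparison of Galois and \'etale cohomology or verified directly by the \v{C}ech computation sketched above, so the proof itself is very short.
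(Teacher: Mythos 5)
Your proof follows the paper's own argument exactly: both combine Remark \ref{type factors}, Corollary \ref{G factors}, and Proposition \ref{compare P and T dash} in the same way. The only thing you add is spelling out the naturality of the connecting homomorphism $\delta$ under the Grothendieck comparison isomorphism $\gamma$, which the paper leaves implicit; this is standard but is indeed the one point that deserves a word, so your inclusion of it is an improvement rather than a detour.
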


\begin{proof}
This follows from Proposition \ref{compare P and T dash}, Remark \ref{type factors} and Corollary \ref{G factors}. 
\end{proof}

Both points of view are helpful: the definition of the type map $\cT$ using the GIT construction of $\Modgs$ is useful due to its explicit nature, whereas the definition of the map $\cG$ using the $\GG_m$-gerbe $\fM_{Q,d}^{\theta-gs} \lra \Modgs$ is more conceptual.

\subsection{Twisted quiver representations}

In this section, we define a notion of twisted quiver representations over an arbitrary field $k$ (where the twisting is given by an element in the Brauer group $\Br(k)$) analogous to the notion of twisted sheaves due to C\u{a}ld\u{a}raru, de Jong and Lieblich \cite{cald,dJ,Lieblich}. 


Let $\alpha : \fZ \lra  \spec k$ be a $\GG_m$-gerbe. For an \'{e}tale cover $S=\spec L \lra \spec k$ given by a finite separable extension $L/k$, we let $S^2 := S \times_k S = \spec (L \otimes_k L)$ and $S^3 := S \times_k S \times_k S$ and so on. We use the notation $p_1,p_2 : S^2 \lra S$ and $p_{ij} : S^3 \lra S^2$ to denote the natural projection maps. We will often use such an \'{e}tale cover to represent $\alpha$ by a \v{C}ech cocycle $\alpha \in \Gamma(S^3, \GG_m) = (L \otimes_k L \otimes_k L)^\times$ whose pullbacks to $S^4$ satisfy the natural compatibility conditions.

Let us first give a definition of twisted quiver representations, which is based on C\u{a}ld\u{a}raru's definition of twisted sheaves. The definition based on Lieblich's notion of twisted sheaves is discussed in Remark \ref{Lieblich twisted}.

\begin{defn}\label{def_twisted_rep}
Let $\alpha : \fZ \lra  \spec k$ be a $\GG_m$-gerbe and take an \'{e}tale cover $S=\spec L \lra \spec k$, such that $\alpha$ is represented by a \v{C}ech cocycle $\alpha \in \Gamma(S^3, \GG_m)$. Then an $\alpha$-twisted $k$-representation of $Q$ (with respect to this presentation of $\alpha$ as a \v{C}ech cocycle) is a tuple $(W,\varphi)$ consisting of an $L$-representation $W$ of $Q$ and an isomorphism $\varphi : p_1^*W \lra p_2^*W$ of $L \otimes_k L$-representations satisfying the $\alpha$-twisted cocycle condition
\[ \varphi_{23} \circ \varphi_{12}= \alpha \cdot \varphi_{13}\]
as morphisms of $L \otimes_k L \otimes_k L$-representations, where $\varphi_{ij} = p_{ij}^* \varphi$. We define the dimension vector of this twisted representation by
\[ \dim(W,\varphi):=\frac{ \dim_L(W)}{\ind \,(\alpha)}, \]
where by the index of $\alpha$, we mean the index of a division algebra $D$ representing the same class in $\Br(k)$.
  
A morphism between two $\alpha$-twisted $k$-representations $(W,\varphi)$ and $(W',\varphi')$ is given by a morphism $\rho : W \lra W'$ of $L$-representations such that $p_2^*\rho \circ \varphi = p_1^* \rho \circ \varphi'$.
\end{defn}

\begin{ex}
If $Q$ is a quiver with one vertex and no arrows, then an $\alpha$-twisted representation of $Q$ over $k$ is an $\alpha$-twisted sheaf over $\spec k$ in the sense of C\u{a}ld\u{a}raru, which we refer to as an $\alpha$-twisted $k$-vector space.
\end{ex}

We define $\crep_k (Q,\alpha)$ to be the category of $\alpha$-twisted $k$-representations of $Q$; one can check that this category does not depend on the choice of \'{e}tale cover on which $\alpha$ trivialises, or on the choice of a representative of the cohomology class of $\alpha$ in $ H^2_{\et}(\spec k, \GG_m)$ analogously to the case for twisted sheaves \textit{cf.}\ \cite[Corollary 1.2.6 and Lemma 1.2.8]{cald}. Furthermore, if the class of $\alpha$ is trivial, then there is an equivalence of categories $\crep_k(Q,\alpha) \cong \crep_k(Q)$. We have the expected functoriality for a field extension $K/k$: there is a functor
\[- \otimes_k K : \crep_k(Q,\alpha) \lra \crep_K(Q,\alpha \otimes_k K)\]
(\textit{cf.}\ \cite[Corollary 1.2.10]{cald} for the analogous statement for twisted sheaves). One can also define families of $\alpha$-twisted representations over a $k$-scheme $T$, where $\alpha$ is the pullback of a $\GG_m$-gerbe on $\spec k$ to $T$ or, more generally, $\alpha$ is any $\GG_m$-gerbe on $T$.

\begin{rmk}\label{Lieblich twisted}
Alternatively, for a $\GG_m$-gerbe $\alpha : \fZ \lra \spec k$ one can define $\alpha$-twisted $k$-representations of $Q$ in an analogous manner to Lieblich \cite{Lieblich} as a tuple $(\cF_v, \varphi_a: \cF_{t(a)} \lra \cF_{h(a)})$ consisting of $\alpha$-twisted locally free coherent sheaves $\cF_v$ and homomorphisms $\varphi_a$ of twisted sheaves, where an $\alpha$-twisted sheaf $\cF$ is a sheaf of $\cO_\fZ$-modules over $\fZ$ whose scalar multiplication homomorphism from this module structure coincides with the action map $\GG_m \times \cF \lra \cF$ coming from the fact that $\fZ$ is a $\GG_m$-gerbe.
\end{rmk}

C\u{a}ld\u{a}raru proves that twisted sheaves can be interpreted as modules over Azumaya algebras. More precisely, for a scheme $X$ and Brauer class $\alpha \in \Br(X)$ that is the class of an Azumaya algebra $\cA$ over $X$, the category of $\alpha$-twisted sheaves $\cmod(X,\alpha)$ over $X$ is equivalent to the category $\cmod(\cA)$ of (right) $\cA$-modules by \cite[Theorem 1.3.7]{cald}. This equivalence is realised by showing that $\cA$ is isomorphic to the endomorphism algebra of an $\alpha$-twisted sheaf $\cE$ (\textit{cf.}\ \cite[Theorem 1.3.5]{cald}) and then 
\[ - \otimes \cE^\vee : \cmod(X,\alpha) \lra \cmod(\cA) \]
gives the desired equivalence. 

Let us describe this equivalence over $X = \spec k$. Let $D$ be a central division algebra over $k$ (or more generally a central simple algebra over $k$). Then $D$ splits over some finite Galois extension $L/k$; that is, there is an isomorphism 
\begin{equation}
j : D \otimes_k L \lra M_n(L),
\end{equation}
where $n = \ind \, (D)$. The isomorphism $j$ and the $\Gal_{L/k}$-action on $L$ and $M_n(L)$ determine a 1-cocycle $a_D :\Gal_{L/k} \lra \PGL_n(L) = \Aut(M_n(L))$ such that $D$ corresponds to $\delta_n(a_D) \in \Br(k)$, where $\delta_n$ is the connecting homomorphism for the short exact sequence $1 \lra \GG_m \lra \GL_n \lra \PGL_n \lra 1$ (for example, see \cite[Theorem 2.4.3]{GS}). More precisely, $D$ is the fixed locus for the twisted $\Gal_{L/k}$-action on $M_n(L)$ defined by the 1-cocycle $a_D$
\begin{equation}
D = (_{a_D} {M_n(L)})^{\Gal_{L/k}}.
\end{equation}
Let $\alpha \in H^2_{\et}(\spec k, \GG_m)$ correspond to $D \in \Br(k)$; then $\alpha$ can be represented by a \v{C}ech cocycle on the \'{e}tale cover given by $L/k$. By \cite[Theorem 1.3.5]{cald}, there is an $\alpha$-twisted $k$-vector space $\cE:=(E,\varphi)$ such that $D$ is isomorphic to the endomorphism algebra of this twisted vector space. Explicitly, we have $E := L^n$ and $\varphi : p_1^*E \lra p_2^*E$ is an isomorphism which induces the isomorphism 
\[ p_1^*\End_L(E) \lra p_1^*(D \otimes_k L) \cong D \otimes_k L \otimes_k L \cong p_2^*(D \otimes_k L) \lra p_2^* \End_L(E), \]
where the first and last maps are pullbacks of the composition of the isomorphism $j : D \otimes_k L \cong M_n(L)$ with the isomorphism $M_n(L) \cong \End(E)$; the existence of such an isomorphism $\varphi$ is given by the Noether-Skolem Theorem and one can check that $\cE:=(E,\varphi)$ is an $\alpha$-twisted sheaf, whose endomorphism algebra is 
\[ \End (\cE) =(  _{a_D}  {\End_L(E)})^{\Gal_{L/k}} \cong ( _{a_D} {M_n(L)})^{\Gal_{L/k}} = D. \]
Then C\u{a}ld\u{a}raru's equivalence is explicitly given by
\begin{equation}\label{cald equiv field}
 - \otimes_k \cE^\vee : \cmod(k,\alpha) \lra \cmod(D).
\end{equation}
With our conventions on dimensions of twisted vector spaces, $\dim \cE = 1$ and the image of this twisted vector space under this equivalence is the trivial module $D$.

For a division algebra $D$, we let $\crep_D(Q)$ denote the category of representations of a quiver $Q$ in the category of $D$-modules.

\begin{prop}\label{equiv twist rep D}
Let $D$ be a central division algebra over a field $k$ and $\alpha$ be a $\GG_m$-gerbe over $k$ representing the same class in $\Br(k)$ as $D$. Then there is an equivalence of categories
\[ F: \crep_k (Q,\alpha) \cong \crep_D(Q).\]
\end{prop}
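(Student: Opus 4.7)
The plan is to reduce to C\u{a}ld\u{a}raru's equivalence $- \otimes_k \cE^\vee : \cmod(k,\alpha) \lra \cmod(D)$ of \eqref{cald equiv field} by a vertex-by-vertex construction. First I would unpack Definition \ref{def_twisted_rep}: giving an $\alpha$-twisted $k$-representation $(W,\varphi)$ of $Q$ is the same as giving, for each vertex $v\in V$, an $\alpha$-twisted $k$-vector space $(W_v,\varphi_v)$, and, for each arrow $a\in A$, a morphism of $\alpha$-twisted sheaves $\phi_a:(W_{t(a)},\varphi_{t(a)}) \lra (W_{h(a)},\varphi_{h(a)})$. Indeed, the descent datum $\varphi:p_1^*W\lra p_2^*W$ decomposes as $(\varphi_v)_{v\in V}$, with each $\varphi_v$ satisfying the $\alpha$-twisted cocycle relation; and the requirement that $\varphi$ is a morphism of $L\otimes_k L$-representations says precisely that $p_2^*\phi_a \circ \varphi_{t(a)} = \varphi_{h(a)} \circ p_1^*\phi_a$ for all $a\in A$, i.e.\ that each $\phi_a$ is a morphism in $\cmod(k,\alpha)$. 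The same translation holds on morphisms of twisted representations of $Q$, so that $\crep_k(Q,\alpha)$ is canonically identified with the category of representations of $Q$ in the abelian category $\cmod(k,\alpha)$.

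Next I would invoke the general principle that any $k$-linear equivalence $G:\cC \lra \cC'$ of categories induces an equivalence between the category of representations of $Q$ in $\cC$ and the category of representations of $Q$ in $\cC'$, obtained by applying $G$ to the object at each vertex and to the morphism along each arrow; a quasi-inverse $H$ of $G$, together with natural isomorphisms $HG\simeq \mathrm{Id}_{\cC}$ and $GH\simeq \mathrm{Id}_{\cC'}$, yields a quasi-inverse of the induced functor in the same manner. Applying this to C\u{a}ld\u{a}raru's equivalence $G := - \otimes_k \cE^\vee : \cmod(k,\alpha) \lra \cmod(D)$ and composing with the identification of the previous paragraph produces the required functor $F:\crep_k(Q,\alpha) \lra \crep_D(Q)$, sending $(W,\varphi)$ to the $D$-representation with vertex data $(W_v\otimes_k \cE^\vee)_{v\in V}$ and arrow maps $\phi_a \otimes \mathrm{id}_{\cE^\vee}$.

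The main obstacle lies in the bookkeeping of the first step: one has to unfold Definition \ref{def_twisted_rep}, which packages the descent data as a single isomorphism of quiver representations satisfying a global twisted cocycle relation, into vertex-wise twisted vector spaces together with arrow-wise morphisms of twisted sheaves, and to check carefully that the two notions of morphism agree. Once this dictionary is in place, the construction of $F$ and the verification that it is an equivalence are formal consequences of C\u{a}ld\u{a}raru's equivalence and the $k$-linearity of all the functors involved.
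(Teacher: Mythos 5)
Your proposal follows the paper's argument exactly: unravel Definition~\ref{def_twisted_rep} to identify $\crep_k(Q,\alpha)$ with the category of representations of $Q$ in $\cmod(k,\alpha)$, then transport the structure across C\u{a}ld\u{a}raru's equivalence $-\otimes_k\cE^\vee:\cmod(k,\alpha)\to\cmod(D)$ of~\eqref{cald equiv field}. You simply spell out the vertex-by-vertex bookkeeping that the paper compresses into the phrase ``unravelling Definition~\ref{def_twisted_rep}''.
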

\begin{proof}
Unravelling Definition \ref{def_twisted_rep}, we see that the category $\crep_k (Q,\alpha)$ is equivalent to the category of representations of $Q$ in the category $\cmod(k,\alpha)$ of $\alpha$-twisted $k$-vector spaces, which we denote by $Q-\cmod(k,\alpha)$. By \cite[Theorem 1.3.7]{cald}, there is an equivalence
\[ \cmod(k,\alpha) \cong \cmod(D) \]
as described in \eqref{cald equiv field} above. Hence, we deduce equivalences
\[ \crep_k (Q,\alpha) \cong Q-\cmod(k,\alpha) \cong Q - \cmod(D)\]
and by definition $\crep_D(Q):= Q - \cmod(D)$.
\end{proof}

There is a natural notion of $\theta$-(semi)stability for twisted representations of $Q$, which involves checking the usual slope condition for twisted subrepresentations, where the dimension of a twisted quiver representation is given in Definition \ref{def_twisted_rep}. By using the functoriality of twisted quiver representations for field extensions, we can also define $\theta$-geometric stability.

\begin{lemma}\label{lemma_stab_twisted_reps_div}
Under the equivalence $F$ of Proposition \ref{equiv twist rep D}, if $\cW:=(W,\varphi)$ is an $\alpha$-twisted representation of $Q$ over $k$, then
\[ \dim_D( F(\cW) )= \dim(\cW). \] 
Moreover, $\theta$-(semi)stability (resp.\ $\theta$-geometric stability) of a twisted representation $\cW$ is equivalent to $\theta$-(semi)stability (resp.\ $\theta$-geometric stability) of the corresponding $D$-representation $F(\cW)$ of $Q$. 
\end{lemma}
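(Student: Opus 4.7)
The plan is to handle the two assertions separately. For the dimension formula, I would first compute $F$ on the distinguished twisted vector space $\cE$: by construction $F(\cE) = \cE \otimes_k \cE^\vee \cong \End(\cE) = D$, which has $D$-dimension $1$. Note that $\dim(\cE) = \dim_L(L^n)/\ind(\alpha) = n/n = 1$, so the dimension formula holds on $\cE$. Next, using that $F$ is an additive equivalence and that every $D$-module is of the form $D^{d'}$, I deduce that every $\alpha$-twisted $k$-vector space is isomorphic to a direct sum $\cE^{\oplus d'}$; thus the dimension formula holds on twisted vector spaces. Applying this componentwise to an $\alpha$-twisted representation of $Q$ gives $\dim_D F(\cW) = \dim(\cW)$ as dimension vectors.

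For the semistability correspondence, the key point is that $F$ is an equivalence of abelian categories (essentially a consequence of Proposition \ref{equiv twist rep D}, since Morita equivalence and the equivalence $\cmod(k,\alpha) \cong \cmod(D)$ are both exact). Hence $F$ sets up a bijection between $\alpha$-twisted subrepresentations of $\cW$ and $D$-subrepresentations of $F(\cW)$. Combined with the first step applied to each subrepresentation, slopes are preserved by $F$, so the slope inequalities defining $\theta$-semistability and $\theta$-stability match on both sides. (Polystability then also matches, as it is characterised by the semisimple objects in the slope-$\mu$ subcategory.)

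For the geometric stability correspondence, I would invoke functoriality of the equivalences with respect to a field extension $K/k$. The base change functor $-\otimes_k K$ sends $\crep_k(Q,\alpha)$ to $\crep_K(Q,\alpha\otimes_k K)$ and, by Proposition \ref{equiv twist rep D} over $K$, the latter is equivalent to $\crep_{D_K}(Q)$, where $D_K \in \Br(K)$ is the central division algebra determined by $K \otimes_k D \cong \Mat_e(D_K)$ (\textit{cf.}\ Remark \ref{rmk_on_Morita_equiv}). The two routes $\crep_k(Q,\alpha) \to \crep_K(Q,\alpha\otimes_k K) \to \crep_{D_K}(Q)$ and $\crep_k(Q,\alpha) \xrightarrow{F} \crep_D(Q) \to \crep_{D_K}(Q)$ coincide (where the second arrow on the bottom is the composition of base change with the Morita equivalence), because all these equivalences are built from the same underlying tensor product constructions. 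Applying the semistability step of the second paragraph over $K$ yields the equivalence of $\theta$-stability of $\cW \otimes_k K$ and of $F(\cW) \otimes_k K$; taking the conjunction over all $K$ gives the equivalence of $\theta$-geometric stability.

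The main obstacle is checking the consistency of dimensions and of subobjects after base change, as the index of $\alpha$ may drop upon extending scalars to $K$. This requires tracking carefully how the Morita equivalence $\cmod(\Mat_e(D_K)) \cong \cmod(D_K)$ rescales dimension vectors by the factor $e$, and verifying that this rescaling is precisely compensated by the change in the denominator $\ind(\alpha\otimes_k K) = \ind(D_K)$ appearing in Definition \ref{def_twisted_rep}, so that the equality $\dim_{D_K}(F_K(\cW\otimes_k K)) = \dim(\cW\otimes_k K)$ holds and slopes are correctly preserved.
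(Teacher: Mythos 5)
Your proposal is correct and follows essentially the same route as the paper's proof: compute $F(\cE) = D$ with $\dim(\cE) = 1 = \dim_D(D)$ to obtain the dimension formula, deduce the (semi)stability correspondence from the fact that $F$ is an equivalence of abelian categories preserving dimensions, and handle geometric stability via the commutative square relating $F_k$, $F_K$, and the base change functors $-\otimes_k K$. The paper states these steps more tersely (and stops the base-change diagram at $\cmod(D\otimes_k K)$ rather than passing further through Morita to $\cmod(D_K)$); you have made explicit the additivity argument $\cE^{\oplus d'} \mapsto D^{\oplus d'}$ and have flagged the genuine subtlety that $\ind(\alpha\otimes_k K)$ may be strictly smaller than $\ind(\alpha)$, so the denominator in Definition \ref{def_twisted_rep} changes under base change. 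That concern does resolve: if $K\otimes_k D \cong \Mat_{e/e_K}(D_K)$ with $e_K = \ind(D_K)$, then $\dim(\cW \otimes_k K) = \dim_L(W)/e_K = d' e/e_K$, while $F(\cW)\otimes_k K = \Mat_{e/e_K}(D_K)^{d'}$ becomes $D_K^{d'e/e_K}$ under Morita, so the two dimensions agree and slopes are preserved. You leave this verification as a stated task rather than carrying it out, but the paper's proof is no more explicit on this point.
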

\begin{proof} 
The first claim follows by construction of the equivalence of $F$ using the $\alpha$-twisted $k$-vector space $\cE=(L^n,\varphi)$ as in \eqref{cald equiv field}: as we already observed, $D$ is the image of $\cE$ under \eqref{cald equiv field} and $\dim(\cE) = 1 = \dim_D(D)$. Then the claim about $\theta$-(semi)stability follows from this first claim. For the preservation of geometric stability, we note that for any field extension $K/k$ we have a commutative diagram
\[ \xymatrix@1{ \crep(k,\alpha)  \ar[d]^{- \otimes_k K} \ar[r]^{F_k} & \cmod(D) \ar[d]^{- \otimes_k K} \\ \crep(K,\alpha \otimes_k K)  \ar[r]^{F_K} & \cmod(D \otimes_k K)  }\]
and also $F_L$ preserves $\theta$-(semi)stability, by a similar argument.
\end{proof}

We can now reinterpret the rational points of the moduli space $\Modgs$ as twisted quiver representations.

\begin{thm}\label{decomp_twisted_reps}
Let $k$ be a perfect field; then $\Modgs(k)$ is the disjoint union over $[\alpha] \in \mathrm{Im}\,(\cT : \Modgs(k) \lra \Br(k))$ of the set of isomorphism classes of $\alpha$-twisted $\theta$-geometrically stable $d'$-dimensional $k$-representations of $Q$, where $d = \ind \,(\alpha) d'$.
\end{thm}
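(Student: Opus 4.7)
The plan is to combine Theorem \ref{fibres div alg} with the equivalence between representations over a central division algebra $D$ and $\alpha$-twisted $k$-representations for a $\GG_m$-gerbe $\alpha$ representing the same Brauer class as $D$. Since Theorems \ref{decomp_thm_gal_intro} and \ref{fibres div alg} already yield a decomposition of $\Modgs(k)$ indexed by division algebras in the image of the type map, the present result is essentially a reformulation of that decomposition in the twisted quiver representation framework developed in this section, and no new moduli-theoretic input is required.

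First, I decompose $\Modgs(k)$ along the non-empty fibres of $\cT : \Modgs(k) \lra \Br(k)$. By Theorem \ref{fibres div alg}, for each division algebra $D \in \mathrm{Im}\,\cT$ there is a dimension vector $d'_D$ with $d = \ind\,(D)\, d'_D$, and the fibre $\cT^{-1}(D)$ is in bijection with the set of isomorphism classes of $\theta$-geometrically stable $d'_D$-dimensional $D$-representations of $Q$, i.e.\ with the isomorphism classes of objects in $\crep_D(Q)$ of dimension $d'_D$ that are $\theta$-geometrically stable.

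Next, I translate each such fibre into the $\alpha$-twisted language. Under the identification $\Br(k) \cong H^2_{\et}(\spec k, \GG_m)$ (which matches division algebras with isomorphism classes of $\GG_m$-gerbes over $\spec k$), each $D \in \mathrm{Im}\,\cT$ corresponds to a gerbe $\alpha$ with $\ind\,(\alpha) = \ind\,(D)$ by the convention of Definition \ref{def_twisted_rep}. Proposition \ref{equiv twist rep D} then supplies an equivalence $\crep_k(Q,\alpha) \cong \crep_D(Q)$, and Lemma \ref{lemma_stab_twisted_reps_div} guarantees that this equivalence preserves both the dimension vector and $\theta$-geometric stability. Restricting to $\theta$-geometrically stable objects of dimension $d' = d'_D$ on both sides and passing to isomorphism classes identifies $\cT^{-1}(D)$ with the set of isomorphism classes of $\alpha$-twisted $\theta$-geometrically stable $d'$-dimensional $k$-representations of $Q$. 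Reassembling these bijections over $\mathrm{Im}\,\cT$ gives the claimed decomposition.

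The only delicate point is the bookkeeping of the dimension conventions and of the compatibility between the identifications of a Brauer class with a division algebra on the one hand, and with a $\GG_m$-gerbe on the other; both are immediate from the definitions, so I do not anticipate any genuine obstacle.
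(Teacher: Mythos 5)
Your proof is correct and takes essentially the same approach as the paper: the paper's proof is the one-line citation of Theorem \ref{thm_Galois_div_alg_intro} (itself a consequence of Theorems \ref{decomp_thm_gal_intro} and \ref{fibres div alg}, which you invoke directly), together with Proposition \ref{equiv twist rep D} and Lemma \ref{lemma_stab_twisted_reps_div}, exactly as you use them.
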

\begin{proof} 
This follows from Theorem \ref{thm_Galois_div_alg_intro}, Proposition \ref{equiv twist rep D} and Lemma \ref{lemma_stab_twisted_reps_div}.
\end{proof}

\subsection{Moduli of twisted quiver representations}

For a $\GG_m$-gerbe $\alpha $ over a field $k$, we let $^\alpha \fM_{Q,d',k}$ denote the stack of $\alpha$-twisted $d'$-dimensional $k$-representations of $Q$. Following Proposition \ref{equiv twist rep D} and Lemma \ref{lemma_stab_twisted_reps_div} (or strictly speaking a version of this equivalence in families), this stack is isomorphic to the stack $ \fM_{Q,d',D}$ of $d'$-dimensional $D$-representations of $Q$, where $D$ is a central division algebra over $k$ corresponding the cohomology class of $\alpha$. 

\begin{prop}\label{descr_tw_stack}
Let $k$ be a field, $D$ be a central division algebra over $k$ and $\alpha : \fZ \lra \spec k$ be a $\GG_m$-gerbe, whose cohomology class is equal to $D$. Then we have isomorphisms
\[^\alpha \fM_{Q,d',k}\cong\fM_{Q,d',D} \cong [\rep_{Q,d',D}/\mathbf{G}_{Q,d',D}] \]
where the $k$-varieties $\rep_{Q,d',D}$ and $\mathbf{G}_{Q,d',D}$ are constructed in Proposition \ref{exists_k_var_for_D_reps}.
\end{prop}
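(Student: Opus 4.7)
The plan is to prove the two displayed equivalences of stacks separately: the first, $^\alpha\fM_{Q,d',k} \cong \fM_{Q,d',D}$, is a family version of the equivalence of categories in Proposition \ref{equiv twist rep D}, while the second, $\fM_{Q,d',D} \cong [\rep_{Q,d',D}/\mathbf{G}_{Q,d',D}]$, follows from the standard presentation of a stack of locally trivial objects as a quotient stack by the automorphism group of a model object.

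For the first isomorphism, fix a $k$-scheme $T$ and write $\alpha_T$ for the pullback of $\alpha$ to $T$ and $\cA_T := D \otimes_k \cO_T$ for the pulled-back Azumaya algebra. C\u{a}ld\u{a}raru's equivalence $\cmod(k,\alpha) \cong \cmod(D)$ used in the proof of Proposition \ref{equiv twist rep D} was realised by tensoring with the dual of a specific $\alpha$-twisted $k$-vector space $\cE$; pulling $\cE$ back to $T$ and applying the same recipe gives, after the Azumaya-algebra/twisted-sheaf version of \cite[Theorem 1.3.7]{cald}, an equivalence $\cmod(T,\alpha_T) \cong \cmod(\cA_T)$ sending the tautological rank-one $\alpha_T$-twisted sheaf to $\cA_T$. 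Taking $Q$-shaped diagrams in both categories, this equivalence restricts to the subcategory of $d'$-dimensional objects (by Lemma \ref{lemma_stab_twisted_reps_div} in families) and is natural in $T$, so it assembles into the desired isomorphism of stacks.

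For the second isomorphism, I would match $T$-points directly. Unwinding definitions, a $T$-point of $[\rep_{Q,d',D}/\mathbf{G}_{Q,d',D}]$ is a pair $(\cP, \sigma)$ consisting of a $\mathbf{G}_{Q,d',D}$-torsor $\cP \to T$ and a $\mathbf{G}_{Q,d',D}$-equivariant morphism $\sigma: \cP \to \rep_{Q,d',D}$. Since $\mathbf{G}_{Q,d',D} = \prod_{v\in V} \baut_{Q,d'_v,D}$ is the group of automorphisms of the tuple of trivial right $\cA$-modules $(\cA^{d'_v})_{v\in V}$, the contracted-product construction $\cF_v := \cP \times^{\mathbf{G}_{Q,d',D}} \cA_T^{d'_v}$ provides an equivalence between such torsors and tuples of fppf-locally trivial right $\cA_T$-modules of rank $d'_v$. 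Under this dictionary, the functor $\brep_{Q,d',D}$ from \eqref{k_var_for_D_reps} identifies $\mathbf{G}_{Q,d',D}$-equivariant morphisms $\sigma$ with collections of $\cA_T$-linear maps $\phi_a: \cF_{t(a)} \to \cF_{h(a)}$ for $a\in A$. This packages up precisely into an object of $\fM_{Q,d',D}(T)$, and the assignment is functorial in $T$, yielding the desired isomorphism.

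The main obstacle is the descent step needed for the second isomorphism: one must check that \emph{every} family of $d'$-dimensional $D$-representations over $T$ arises (étale-locally) from the contracted-product construction, i.e.\ that the underlying $\cA_T$-modules $\cF_v$ are fppf-locally trivial of the correct rank. This reduces to the statement that a finitely generated projective right $\cA_T$-module which is locally free of rank $d'_v$ as an $\cO_T$-module of rank $d'_v \ind\,(D)^2$ is étale-locally isomorphic to $\cA_T^{d'_v}$, which follows from the fact that Azumaya algebras split étale-locally and that projective modules over local rings of the split algebra $\Mat_{\ind\,(D)}(\cO_{T,t})$ are free, via Morita equivalence. Once this is in place, both equivalences are natural in $T$ and thus promote to isomorphisms of stacks.
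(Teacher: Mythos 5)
Your proposal is correct and follows essentially the same strategy as the paper, which itself is quite terse: the paper dispatches the first isomorphism by referring to the families version of Proposition~\ref{equiv twist rep D}, and for the second it simply invokes the tautological family of $D$-representations over $\rep_{Q,d',D}$ obtained by Galois descent from the one over $\rep_{Q,d,k^s}$.

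What you add is the substantive verification the paper leaves implicit. Your torsor/contracted-product description of $[\rep_{Q,d',D}/\mathbf{G}_{Q,d',D}](T)$ and the paper's ``tautological family defines a morphism to $\fM_{Q,d',D}$'' are two phrasings of the same construction; the real content is your ``main obstacle'' paragraph, namely that every family of $d'$-dimensional $D$-representations over $T$ is étale-locally isomorphic to the trivial family $(\cA_T^{d'_v})_{v}$, so that the tautological family is indeed locally universal. Your reduction to the fact that Azumaya algebras split étale-locally, combined with Morita equivalence and freeness of projective modules over local rings, is the right argument and is exactly what makes the essential surjectivity of $[\rep_{Q,d',D}/\mathbf{G}_{Q,d',D}] \to \fM_{Q,d',D}$ go through. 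In short: you recover the paper's proof but make the local-triviality step explicit, which is worth spelling out since the paper only gestures at it.
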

\begin{proof}
We have already explained the first isomorphism. For the second, we use the fact that there is a tautological family of $d'$-dimensional $D$-representations of $Q$ over $\rep_{Q,d',D}$ which is obtained by Galois descent for the tautological family over $\rep_{Q,d,k^s}$, where $k^s$ is a separable closure of $k$ and $d: = \ind\,(D)d'$.
\end{proof}

As described in $\S$\ref{sec gerbes}, the type map $\cT : \Modgs(k) \lra \Br(k)$ extends to a map $\cG : [\Rep/\Gbar](k) \lra \Br(k)$ defined in \eqref{def_G}. If a division algebra $D$ lies in the image of $\cG$, then it also lies in the image of $\cP: [\Rep/\Gbar](k) \lra H^1_{\et}(\spec k, \Gbar)$. Then we can use the corresponding $\Gbar(k^s)$-valued 1-cocycle on $\Gal_k$ to modify the $\Gal_k$-action on $\rep_{Q,d,k^s}$ in order to obtain the $k$-varieties $\rep_{Q,d',D}$ and $\mathbf{G}_{Q,d',D}$ with $d = \ind \,(D) d'$ analogously to Proposition \ref{exists_k_var_for_D_reps}, where here $k^s$ denotes a separable closure of $k$.

We recall that a $k$-form of a $k^s$-scheme $X$ is a $k$-scheme $Y$ such that $X \cong Y \times_k k^s$. For a central division algebra $D$ over $k$ and dimension vectors $d, d'$ such that $d = \ind\,(D) d'$, the $k$-variety $\rep_{Q,d',D}$ (resp.\ $\mathbf{G}_{Q,d',D}$) is a $k$-form of the affine scheme $\rep \times_k \, k^s = \rep_{Q,d,k^s}$ (resp.\ the reductive group $\mathbf{G} \times_k k^s =  \mathbf{G}_{Q,d,k^s}$), as already seen in the proof of Proposition \ref{exists_k_var_for_D_reps} (see also Remark \ref{rmk nonperf field}). In particular, $\mathbf{G}_{Q,d',D}$ is reductive, as its base change to $k^s$ is reductive.

The following result and Theorem \ref{forms_of_moduli_sp} can be viewed as quiver versions of analogous statements for twisted sheaves due to Lieblich (\textit{cf.}\ \cite[Proposition 3.1.2.2]{Lieblich}). 

\begin{prop}\label{forms_of_stack}
For a field $k$ with separable closure $k^s$, the moduli stack $\fM_{Q,d,k^s}$ has different $k$-forms given by the moduli stacks $^\alpha \fM_{Q,d',k}$ for all $\alpha$ in the image of the map
\[ \cG : [\Rep/\Gbar](k) \lra \Br(k), \]
where $d =  \ind\,(\alpha) d'$.
\end{prop}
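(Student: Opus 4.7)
The plan is to invoke Proposition \ref{descr_tw_stack} to present ${}^\alpha \fM_{Q,d',k}$ as the quotient stack $[\rep_{Q,d',D}/\mathbf{G}_{Q,d',D}]$, where $D$ is the central division algebra over $k$ corresponding to the cohomology class of $\alpha$, and then to show that base change of this quotient stack from $k$ to $k^s$ recovers $\fM_{Q,d,k^s} = [\rep_{Q,d,k^s}/\mathbf{G}_{Q,d,k^s}]$. Since formation of a quotient stack $[X/G]$ commutes with base change of the base field (the fibre product of groupoids is taken term by term), what needs to be checked is the following: (i) we have $\rep_{Q,d',D} \times_k k^s \cong \rep_{Q,d,k^s}$ and $\mathbf{G}_{Q,d',D} \times_k k^s \cong \mathbf{G}_{Q,d,k^s}$ as $k^s$-schemes, and (ii) the $\mathbf{G}_{Q,d',D}$-action on $\rep_{Q,d',D}$ base changes to the standard $\mathbf{G}_{Q,d,k^s}$-action on $\rep_{Q,d,k^s}$.

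For (i), since $\alpha$ lies in the image of $\cG$, by Corollary \ref{G factors} together with $\cG = \delta \circ \cP$, the class of $\alpha$ admits a lift to an element of $H^1_{\et}(\spec k, \Gbar)$, equivalently a $\Gbar(k^s)$-valued $1$-cocycle on $\Gal_k$. Lifting this cocycle to a $\mathbf{G}(k^s)$-valued modifying family $u$ (as in Definition \ref{modifying_fmly_def} and Proposition \ref{modified_actions_Gal_case}, in the variant of Remark \ref{rmk nonperf field} for $\Gal(k^s/k)$) and performing Galois descent on $\rep_{Q,d,k^s}$ and $\mathbf{G}_{Q,d,k^s}$ with respect to the twisted $\Gal(k^s/k)$-actions $\Phi^u$ and $\Psi^u$ produces, by construction, the $k$-varieties $\rep_{Q,d',D}$ and $\mathbf{G}_{Q,d',D}$, and effectiveness of descent on affine schemes supplies the desired isomorphisms after base change back to $k^s$. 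This also shows that both $\rep_{Q,d',D}$ and $\mathbf{G}_{Q,d',D}$ are genuine $k$-forms of the $k^s$-schemes $\rep_{Q,d,k^s}$ and $\mathbf{G}_{Q,d,k^s}$.

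For (ii), the compatibility \eqref{comp_btw_the_actions_Gal_case} between the actions $\Phi^u$ and $\Psi^u$ and the $\mathbf{G}(k^s)$-action on $\rep(k^s)$ established in Proposition \ref{modified_actions_Gal_case} (and which holds verbatim with $\Gal(k^s/k)$ in place of $\Gal_k$) says precisely that the action morphism $\mathbf{G}_{Q,d,k^s} \times_{k^s} \rep_{Q,d,k^s} \lra \rep_{Q,d,k^s}$ is $\Gal(k^s/k)$-equivariant for the twisted actions on source and target. Hence it descends to a $k$-morphism $\mathbf{G}_{Q,d',D} \times_k \rep_{Q,d',D} \lra \rep_{Q,d',D}$ whose base change to $k^s$ is the original action morphism; this is exactly the action produced in Proposition \ref{exists_k_var_for_D_reps}. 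Combining (i) and (ii) yields
\[ {}^\alpha \fM_{Q,d',k} \times_k k^s \cong [\rep_{Q,d',D}/\mathbf{G}_{Q,d',D}] \times_k k^s \cong [\rep_{Q,d,k^s}/\mathbf{G}_{Q,d,k^s}] = \fM_{Q,d,k^s}, \]
which is the desired $k$-form statement.

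The only real delicacy I foresee is checking that effective descent applies to the action morphism and not just to the underlying schemes; but since both source and target of the action morphism descend (being affine, or more generally quasi-projective) and the morphism is $\Gal(k^s/k)$-equivariant, this is standard (for example, \cite[Section 14.20]{GW}). In the non-perfect case, the passage from $\Gal_k$-actions on $\kb$-points to $\Gal(k^s/k)$-actions on $k^s$-schemes required by Remark \ref{rmk nonperf field} introduces no conceptual changes, since the modifying family constructed from $\cP(\alpha)$ is already valued in $\Gbar(k^s)$.
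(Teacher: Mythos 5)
Your proof is correct and takes essentially the same route as the paper's: both reduce to the presentation ${}^\alpha\fM_{Q,d',k}\cong[\rep_{Q,d',D}/\mathbf{G}_{Q,d',D}]$ from Proposition \ref{descr_tw_stack}, then use that the Galois-descent constructions of Proposition \ref{exists_k_var_for_D_reps} and Remark \ref{rmk nonperf field} make $\rep_{Q,d',D}$ and $\mathbf{G}_{Q,d',D}$ into $k$-forms of $\rep_{Q,d,k^s}$ and $\mathbf{G}_{Q,d,k^s}$, so that the quotient stack base-changes to $\fM_{Q,d,k^s}$. You simply spell out more explicitly the points the paper leaves to the cited results, namely the lift of $\alpha$ to $H^1_{\et}(\spec k,\Gbar)$ via $\cP$ and the $\Gal(k^s/k)$-equivariance of the action morphism needed for the descent of the group action.
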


\begin{proof}
By Proposition \ref{descr_tw_stack}, we have $^\alpha \fM_{Q,d',k} \times_k k^s \cong [\rep_{Q,d',D}/\mathbf{G}_{Q,d',D}] \times_k  k^s$, which is isomorphic to
\[  [\rep_{Q,d',D} \times_k k^s/\mathbf{G}_{Q,d',D} \times_k k^s] \cong  [ \rep_{Q,d,k^s}/\mathbf{G}_{Q,d,k^s}] \cong \fM_{Q,d,k^s}\]
by Proposition \ref{exists_k_var_for_D_reps} and Remark \ref{rmk nonperf field}.
\end{proof}

For a central division algebra $D$ over $k$ and dimension vectors $d, d'$ such that $d = \ind\,(D) d'$, we note that the reductive $k$-group $\mathbf{G}_{Q,d',D}$ acts on the $k$-variety $\rep_{Q,d',D}$. We can consider the GIT quotient for this action with respect to the character $\chi_\theta : \mathbf{G}_{Q,d',D} \lra \GG_m$ obtained by Galois descent from the character $ \chi_\theta : \mathbf{G}_{Q,d,\kb} \lra \GG_{m,\kb}$. Since $\rep_{Q,d',D} \times_k \kb \cong \rep_{Q,d,\kb}$ and base change by field extensions preserves the GIT (semi)stable sets, we have
\[ \rep_{Q,d',D}^{\chi_\theta-(s)s} \times_k \kb =  \rep_{Q,d,\kb}^{\chi_\theta-(s)s} = \rep_{Q,d,\kb}^{\theta-(s)s}, \]
where the last equality uses the Hilbert--Mumford criterion and we recall that over the algebraically closed field $\kb$ the notions of $\theta$-geometrically stability and $\theta$-stability coincide. By Lemma \ref{lem_geom_s_div_alg} and Lemma \ref{stability_over_separably_closed_field_implies_geometric_stability} and the fact that the GIT (semi)stable sets commute with base change by field extensions, we deduce that
\[ \rep_{Q,d',D}^{\chi_\theta-ss} = \rep_{Q,d',D}^{\theta-ss} \quad \text{and} \quad \rep_{Q,d',D}^{\chi_\theta-s} = \rep_{Q,d',D}^{\theta-gs}. \]
Then we have a GIT quotient 
\[\rep_{Q,d',D}^{\chi_\theta-ss}  \lra \cM^{\theta-ss}_{Q,d',D}:=\rep_{Q,d',D} /\!/_{\chi_\theta} \mathbf{G}_{Q,d',D} \]
that restricts to a geometric quotient $$\rep_{Q,d',D}^{\theta-gs} \lra \cM^{\theta-gs}_{Q,d',D}:=\rep_{Q,d',D}^{\theta-gs}/ \mathbf{G}_{Q,d',D}.$$ We can now prove Theorem \ref{forms_of_moduli_sp}.

\begin{proof}[Proof of Theorem \ref{forms_of_moduli_sp}]
The first statement is shown analogously to the fact that the moduli stack $\fM_{Q,d}^{\theta-gs} = [\Rep^{\theta-gs}/\G]$ of $\theta$-geometrically stable $d$-dimensional $k$-representations of $D$ is a $\GG_m$-gerbe over $$\Modgs = [\Rep^{\theta-gs}/\Gbar].$$

One proves that $ \cM^{\theta-gs}_{Q,d',D}$ corepresents the moduli functor of $\theta$-geometrically stable  $d'$-dimensional  $D$-representations of $Q$ by modifying the argument in Theorem \ref{GIT_const_of_ModQd}. More precisely, we obtain a tautological family on $\rep_{Q,D,d'}$ from Galois descent, using the tautological family on $\rep_{Q,d,k^s}$. Then by Proposition \ref{equiv twist rep D} and Lemma \ref{lemma_stab_twisted_reps_div}, we see that  $\cM^{\theta-gs}_{Q,d',D}$ also corepresents the second moduli functor.

The final statement follows as in Proposition \ref{forms_of_stack}.
\end{proof}

In Theorem \ref{forms_of_moduli_sp}, we emphasise that the term coarse moduli space is used in the sense of stacks. In particular, we note that the $k$-rational points of  $\cM^{\theta-gs}_{Q,d',D}$ are not in bijection with the set of isomorphism classes of $d'$-dimensional $\theta$-geometrically stable $D$-representations of $Q$ in general (as we have already observed for the trivial division algebra $D = k$). 

There are some natural parallels between the results in this section and the work of Le Bruyn \cite{LeBruyn_Dbranes}, who describes the $A$-valued points of the moduli stack $\fX = [\Rep/\Gbar]$ over $ k = \spec \CC$, for a commutative $\CC$-algebra $A$, in terms of algebra morphisms from the quiver algebra $\CC Q$ to an Azumaya algebra $\cA$ over $A$. He also relates these $A$-valued points to twisted quiver representations. In this case, for $A = \CC$, there are no twisted representations as $\Br(\CC) = 1$, whereas for non-algebraically closed field $k$, we see twisted representations as $k$-rational points of $\fX$. By combining the ideas of \cite{LeBruyn_Dbranes} with the techniques for non-algebraically closed fields $k$ used in the present paper, it should be possible to also describe the $A$-valued points of the moduli stack $\fX = [\Rep/\Gbar]$ over an arbitrary field $k$ for any commutative $k$-algebra $A$.

\subsection{Universal twisted families}

Let us now use twisted representations to describe the failure of $\Modgs$ to admit a universal family of quiver representations and to give a universal twisted representation over this moduli space. In this section, $k$ is an arbitrary field.

\begin{defn}
Let $\alpha(\Modgs) \in H^2_{\et}(\Modgs, \GG_m)$ be the class of the $\GG_m$-gerbe $\pi^{\theta-gs} : \fM^{\theta-gs}_{Q,d} \lra \Modgs$; we refer to $\alpha(\Modgs)$ as the obstruction to the existence of a universal family on $\Modgs$.
\end{defn}

\begin{rmk}\label{rmk obstr class}
Let $\beta(\Modgs) \in H^1_{\et}(\Modgs, \Gbar)$ be the class of the $\Gbar$-torsor $p^{\theta-gs} : \rep^{\theta-gs} \lra \Modgs$. By Lemma \ref{delta universally}, we have
\[ \alpha(\Modgs) = \delta(\beta(\Modgs)) \]
for the connecting homomorphism $\delta: H^1_{\et}(\Modgs, \Gbar) \lra H^2_{\et}(\Modgs, \GG_m)$. 
\end{rmk}

\begin{lemma}
The class $\alpha:=\alpha(\Modgs)$ is a Brauer class.
\end{lemma}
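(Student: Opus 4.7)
The plan is to exhibit $\alpha(\Modgs)$ as the cohomology class of an Azumaya algebra on $\Modgs$ by pushing forward the class $\beta(\Modgs)$ of Remark \ref{rmk obstr class} along a projection to a $\PGL_n$-torsor. By Remark \ref{rmk obstr class}, $\alpha(\Modgs) = \delta(\beta(\Modgs))$, so it suffices to produce a class in some $H^1_{\et}(\Modgs,\PGL_n)$ whose connecting image agrees with $\alpha(\Modgs)$, since $\PGL_n$-torsors are well-known to correspond to Azumaya algebras of degree $n$ and thus yield Brauer classes.

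Fix any vertex $v \in V$ and consider the projection $\G = \prod_{w \in V}\GL_{d_w} \lra \GL_{d_v}$ onto the $v$-th factor. Since $\Delta$ consists of tuples $(tI_{d_w})_{w\in V}$ with $t \in \GG_m$, this projection restricts to an isomorphism $\Delta \overset{\sim}{\lra} \GG_m$ onto the centre of $\GL_{d_v}$, and it descends to a map $\Gbar \lra \PGL_{d_v}$. This yields a morphism of short exact sequences
\[ \xymatrix{ 1 \ar[r] & \Delta \ar[r] \ar[d]^{\sim} & \G \ar[r] \ar[d] & \Gbar \ar[r] \ar[d] & 1 \\ 1 \ar[r] & \GG_m \ar[r] & \GL_{d_v} \ar[r] & \PGL_{d_v} \ar[r] & 1, } \]
which by naturality of the connecting map induces a commutative square
\[ \xymatrix{ H^1_{\et}(\Modgs,\Gbar) \ar[d] \ar[r]^{\delta} & H^2_{\et}(\Modgs,\GG_m) \ar[d]^{=} \\ H^1_{\et}(\Modgs,\PGL_{d_v}) \ar[r]^{\delta_v} & H^2_{\et}(\Modgs,\GG_m). } \]
Letting $\beta_v$ denote the image of $\beta(\Modgs)$ under the left vertical map, we obtain $\alpha(\Modgs) = \delta(\beta(\Modgs)) = \delta_v(\beta_v)$.

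The class $\beta_v \in H^1_{\et}(\Modgs,\PGL_{d_v})$ represents a $\PGL_{d_v}$-torsor on $\Modgs$; by the Skolem--Noether identification $\PGL_{d_v} \cong \underline{\Aut}(\Mat_{d_v})$, it corresponds to an Azumaya algebra $\cA_v$ of degree $d_v$ over $\Modgs$. The standard description of the connecting map associated with $1 \ra \GG_m \ra \GL_{d_v} \ra \PGL_{d_v} \ra 1$ identifies $\delta_v(\beta_v)$ with the Brauer class $[\cA_v] \in \Br(\Modgs)$, so $\alpha(\Modgs) = [\cA_v]$ lies in the image of $\Br(\Modgs) \lra H^2_{\et}(\Modgs,\GG_m)$, proving the lemma. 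The only nontrivial verification in this argument is the observation that $\Delta \to \GG_m$ is an isomorphism, which follows directly from the description of $\Delta$ as the diagonal scalars in $\G$; the rest is naturality of connecting homomorphisms plus the Azumaya/$\PGL_n$-torsor correspondence, so I expect no serious obstacle.
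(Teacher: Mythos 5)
Your proof is correct, and it takes a genuinely different (though closely related) route from the paper's. Where the paper factors $\delta$ through $H^1_{\et}(\Modgs,\PGL_N)$ using the \emph{diagonal inclusion} $\G \hookrightarrow \GL_N$ with $N=\sum_{v\in V}d_v$, you instead use the \emph{projection} $\G \onto \GL_{d_v}$ onto a single fixed factor. Both homomorphisms restrict to the canonical isomorphism $\Delta \cong \GG_m$ (onto the centre of the target), hence both descend to a map $\Gbar \to \PGL_n$ and give the same commutative square of connecting maps, and hence the same conclusion: $\alpha$ lies in the image of $H^1_{\et}(\Modgs,\PGL_n)\to H^2_{\et}(\Modgs,\GG_m)$ and so is a Brauer class. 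Your choice is in fact the very same device used in the paper's proof of Proposition \ref{nec_con_div_alg} (see the square \eqref{comp_btw_CSA_at_vertices}), so it is more economical in reusing prior constructions; it yields an Azumaya algebra of degree $d_v$ rather than degree $N$. The one small point you should make explicit: you must pick a vertex $v$ with $d_v \geq 1$, since for $d_v=0$ the projection kills $\Delta$ and the map $\Delta\to\GG_m$ is no longer an isomorphism. Such a vertex exists whenever $d\neq 0$, which is the standing assumption, so this is harmless but worth stating. The paper's diagonal embedding sidesteps this edge case automatically.
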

\begin{proof}
We will show that $\alpha \in H^2_{\et}(\Modgs, \GG_m)$ is a Brauer class, by proving that it is the image of an Azumaya algebra of index $N:= \sum_{v \in V} d_v$ on $\Modgs$. The representation $\G \lra \GL_N$, given by including each copy of $\GL_{d_v}$ diagonally into $\GL_N$, descends to a homomorphism $\Gbar \lra \PGL_N$. This gives the following factorisation of $\delta$
\[ \delta : H^1_{\et}(\Modgs, \Gbar) \lra H^1_{\et}(\Modgs, \PGL_N) \lra H^2_{\et}(\Modgs, \GG_m), \]
which proves this claim by Remark \ref{rmk obstr class}.
\end{proof}

The following result explains the name of the class $\alpha(\Modgs)$ given above. This result is a quiver analogue of the corresponding statement for twisted sheaves due to C\u{a}ld\u{a}raru \cite[Proposition 3.3.2]{cald}.

\begin{prop}\label{twisted_univ_family}
Let $\alpha:=\alpha(\Modgs)$ denote the obstruction class to the existence of a universal family. Then there is a \lq universal' $\alpha$-twisted family $\cW$ of $\theta$-geometrically stable $k$-representations of $Q$ over $\Modgs$; that is, there is an \'{e}tale cover $\{ f_i: U_i \lra \Modgs \}$ such that over $U_i$ there are local universal families $\cW_i$ of $k$-representations of $Q$ and there are isomorphisms $\varphi_{ij} : \cW_i|_{U_{ij}} \lra \cW_j|_{U_{ij}}$ which satisfy the $\alpha$-twisted cocycle condition:
\[  \varphi_{jl} \circ \varphi_{ij}= \alpha_{ijl} \cdot \varphi_{il}. \]
In particular, $\Modgs$ admits a universal family of quiver representations if and only if the obstruction class $\alpha  \in \Br(\Modgs)$ is trivial.
\end{prop}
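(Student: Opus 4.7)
The plan is to construct the twisted family directly from the principal $\Gbar$-bundle $p^{\theta-gs} : \Rep^{\theta-gs} \lra \Modgs$ and the tautological $\G$-equivariant family $\cF$ on $\Rep^{\theta-gs}$, and then to identify the resulting \v{C}ech $2$-cocycle with $\alpha(\Modgs)$ via Remark \ref{rmk obstr class}.

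First, I would choose an \'{e}tale cover $\{f_i : U_i \lra \Modgs\}$ on which the $\Gbar$-bundle $p^{\theta-gs}$ trivialises, picking sections $s_i : U_i \lra \Rep^{\theta-gs}$. Setting $\cW_i := s_i^*\cF$ yields a $\theta$-geometrically stable $d$-dimensional family of representations of $Q$ on $U_i$; since $\cF$ has the local universal property by Lemma \ref{local_univ_family}, so does each $\cW_i$. On the overlap $U_{ij} := U_i \times_{\Modgs} U_j$, the two sections agree modulo the $\Gbar$-action, so $s_j|_{U_{ij}} = \bar{g}_{ij} \cdot s_i|_{U_{ij}}$ for a unique $\bar{g}_{ij} \in \Gbar(U_{ij})$. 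The projection $\G \onto \Gbar$ is a $\GG_m$-torsor, which splits \'{e}tale locally, so after refining the cover I may lift each $\bar{g}_{ij}$ to $g_{ij} \in \G(U_{ij})$. The $\G$-equivariance of $\cF$ then induces an isomorphism $\varphi_{ij} : \cW_i|_{U_{ij}} \overset{\sim}{\lra} \cW_j|_{U_{ij}}$.

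On the triple overlap $U_{ijl}$, both $g_{ij}\, g_{jl}$ and $g_{il}$ are lifts of $\bar{g}_{ij}\bar{g}_{jl} = \bar{g}_{il}$, so they differ by a scalar $\alpha_{ijl} \in \Delta(U_{ijl}) = \GG_m(U_{ijl})$; by construction, $\alpha_{ijl}$ is a \v{C}ech $2$-cocycle representing $\delta(\beta(\Modgs)) = \alpha(\Modgs)$ via Remark \ref{rmk obstr class}. Because $\Delta$ acts on any family through simultaneous scalar multiplication on the vertex bundles $\cE_v$, propagating the identity $g_{ij}\, g_{jl} = \alpha_{ijl}\, g_{il}$ through the equivariance of $\cF$ yields exactly the twisted cocycle identity $\varphi_{jl} \circ \varphi_{ij} = \alpha_{ijl} \cdot \varphi_{il}$. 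The local universal property of each $\cW_i$ and the compatibility of the $\varphi_{ij}$ with pullback together furnish the asserted universal property of the twisted family.

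For the final equivalence, if $\alpha(\Modgs)$ is trivial then $\alpha_{ijl}$ is a coboundary, and rescaling each $\varphi_{ij}$ by a suitable $\GG_m$-valued \v{C}ech $1$-cochain makes the cocycle strict, so the $\cW_i$ glue to a genuine universal family on $\Modgs$; conversely, a universal family on $\Modgs$ provides a section of the $\GG_m$-gerbe $\pi^{\theta-gs} : \fM^{\theta-gs}_{Q,d} \lra \Modgs$, forcing its class $\alpha(\Modgs)$ to be trivial. The delicate step is keeping track of the two roles played by $\Delta \subset \G$: it acts trivially on $\Rep^{\theta-gs}$ (so that $\Gbar$ is the correct structure group for the bundle $p^{\theta-gs}$), but the ambiguity in lifting $\bar{g}_{ij}$ to $\G$ produces the scalar factors $\alpha_{ijl}$ which act non-trivially on each $\cW_i$ by simultaneous scaling of the vertex bundles, and identifying these scalars with a \v{C}ech representative of $\alpha(\Modgs)$ is where Remark \ref{rmk obstr class} is used.
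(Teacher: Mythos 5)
Your proof is correct and takes essentially the same route as the paper: trivialise the $\Gbar$-bundle $\Rep^{\theta-gs}\to\Modgs$ on an \'{e}tale cover, pull back the tautological family, use the $\G$-equivariance of $\cF$ to build the transition isomorphisms, and observe that the failure of the cocycle condition is exactly the image of $\beta(\Modgs)$ under the connecting map $\delta$. (The paper packages the construction of the local families $\cW_i$ via $\Gbar$-equivariant descent after killing the $\Delta$-weight with $\cL_i^\vee$, whereas you simply take $\cW_i:=s_i^*\cF$ -- these are the same thing; also note a minor ordering slip: with your convention $s_j=\bar g_{ij}\cdot s_i$, the cocycle relation is $\bar g_{jl}\bar g_{ij}=\bar g_{il}$, so the relevant lifts to compare are $g_{jl}g_{ij}$ and $g_{il}$, not $g_{ij}g_{jl}$ and $g_{il}$.)
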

\begin{proof}
Let us take an \'{e}tale cover $\{ f_i : U_i \lra \Modgs \}$ on which the principal $\Gbar$-bundle $\cP:= \Rep^{\theta-gs} \lra \Modgs$ is trivialisable: if $\cP_i:=f_i^*\cP$, then we have isomorphisms $\psi_i : \cP_i \cong U_i \times \Gbar$. Let $\cF_i \lra \cP_i$ denote the pullback of the tautological family $\cF \lra \cP:=\Rep^{\theta-gs}$ of $\theta$-geometrically stable $d$-dimensional $k$-representations to $\cP_i$. The family $\cF_i = (\cF_{i,v}, \phi_{i,a}: \cF_{i,t(a)} \lra \cF_{i,h(a)})$ consists of rank $d_v$ trivial bundles $\cF_{i,v}$ over $\cP_i$ with a $\G$-linearisation, such that $\Delta \cong \GG_m$ acts on the fibres with weight $1$. We can modify this family by observing that there is a line bundle $\cL_i \lra \cP_i$ given by the $\Delta$-bundle $U_i \times \G \lra U_i \times \Gbar \cong \cP_i$, which has a $\G$-linearisation, where again $\Delta$ acts by weight $1$. Then the $\Delta$-weight on $\cF'_i:=\cF_i \otimes \cL_i^\vee$ is zero, and thus the sheaves $\cF'_{i,v}$ admit $\Gbar$-linearisations. 

We can now use descent theory for sheaves over the morphism $ \cP_i \lra U_i$ to prove that the family $\cF_i'$ of representations of $Q$ over $\cP_i$ descends to a family $\cW_i$ over $U_i$. More precisely, by  \cite[Theorem 4.2.14]{Huybrechts_Lehn},  the $\Gbar$-linearisation on $\cF'_{i,v}$ gives an isomorphism $\text{pr}_1^*\cF'_{i,v} \cong \text{pr}_2^*\cF'_{i,v}$, for the projections $\text{pr}_i : \cP_i \times_{U_i} \cP_i \lra \cP_i$, and this satisfies the cocycle condition; hence, $\cF'_{i,v}$ descends to a sheaf $\cW_{i,v}$ over $U_i$. The homomorphisms $\phi_{i,a}$ descend to $U_i$ similarly. Since the families $\cW_i$ over $U_i$ descend from the tautological family locally, they are local universal families (for example, locally adapt the corresponding argument for moduli of sheaves in \cite[Proposition 4.6.2]{Huybrechts_Lehn}). 

We can refine our \'{e}tale cover, so that $\Pic(U_{ij}) = 0$. Then, as the local universal families $\cW_{i}$ and $\cW_j$ are equivalent over $U_{ij}$, there is an isomorphism
\[ \varphi_{ij} : \cW_i|_{U_{ij}} \lra \cW_j|_{U_{ij}}. \]
On the triple intersections $U_{ijk}$, we let $\gamma_{ijk}:=  \varphi_{ik}^{-1} \circ  \varphi_{jk} \circ  \varphi_{ij} \in \Aut(\cW_i|_{U_{ijk}})$. As $\cW_i$ are families of $\theta$-geometrically stable representations, which in particular are simple, it follows that $\gamma_{ijk} \in \Gamma(U_{ijk},\GG_m)$. Hence $\cW:=(\cW_i, \varphi_{ij})$ is a $\gamma$-twisted family of $\theta$-geometrically stable $k$-representations of $Q$ over $\Modgs$. 

It remains to check that the classes $\gamma$ and $\alpha$ in $H_{\et}^2(\Modgs,\GG_m)$ coincide. We note that $\alpha := \delta(\beta)$ for the class $\beta:=[\cP] \in H^1_{\et}(\Modgs, \Gbar)$. We can describe the cocycle representing $\beta$ by using our given \'{e}tale cover $\{ f_i : U_i \lra \Modgs \}$ on which $\cP$ is trivialisable. More precisely, $\beta$ is represented by the cocycle given by transition functions $\beta_{ij} \in \Gamma(U_{ij}, \Gbar)$ for $\cP$ such that $s_i = \beta_{ij} s_j$, where $s_i : U_i \lra \cP_i$ are the sections giving the isomorphism $\psi_i$. If we take lifts $\tilde{\beta}_{ij} \in \Gamma(U_{ij}, \G)$ of $\beta_{ij}$, then these determine $\alpha_{ijk} \in \Gamma(U_{ijk}, \GG_m)$ by the relation $\alpha_{ijk} \tilde{\beta}_{ik} =\tilde{\beta}_{jk} \tilde{\beta}_{ij}$ over $U_{ijk}$. By pulling back the isomorphisms $\varphi_{ij}$ along the $\G$-invariant morphisms $\cP_{ij} \lra U_{ij}$, we obtain isomorphisms $\cL_i|_{U_{ij}} \cong \cL_j|_{U_{ij}}$ as $\G$-bundles over $U_{ij}$, which is given by a section $\eta_{ij} \in \Gamma(U_{ij}, \G)$. By construction $\gamma = \delta([\bar{\eta}_{ij}])$, where $\bar{\eta}_{ij} \in \Gamma(U_{ij}, \Gbar)$ is the image of $\eta_{ij}$. Since $\eta_{ij}$ are also lifts of the cocycle $\beta_{ij}$, it follows that $\gamma = \alpha$.
\end{proof}

We can consider $\cW$ as a universal $\alpha$-twisted family over $\Modgs$ of $k$-representa\-tions of $Q$. In particular, if the obstruction class $\alpha(\Modgs)$ is trivial, then $\Modgs$ is a fine moduli space, as it admits a universal family. We note that if $r : \spec k \lra \Modgs$, then the image of $\alpha(\Modgs)$ under the map
\[ r^* : H^2_{\et}(\Modgs,\GG_m) \lra H^2(\spec k, \GG_m) \]
is the class $\cG(r)$ described in $\S$\ref{sec gerbes} and the index of the central division algebra corresponding to $\cG(r) \in \Br(k)$ divides the dimension vector $d$ by Proposition \ref{nec_con_div_alg}. 

If the dimension vector $d$ is primitive, then the moduli space $\Modgs$ is fine by  \cite[Proposition 5.3]{king}. The Brauer group of moduli spaces of quiver representations was studied by Reineke and Schroer \cite{Reineke_Schroer}; for several quiver moduli spaces, they describe the Brauer group and prove the non-existence of a universal family in the case of non-primitive dimension vectors \textit{cf}.\ \cite[Theorem 3.4]{Reineke_Schroer}. Proposition \ref{twisted_univ_family} offers some compensation for this seemingly negative result: instead, one has a twisted universal family.

\section{Fields of moduli and fields of definition}\label{sec_field_of_moduli_and_domain_of_def}

The goal of this section is to prove Theorem \ref{field_of_moduli_and_domain_of_def}. First, let us recall a few facts about the notions of field of definition and field of moduli, in the context of quiver representations. Let $\Omega$ be a field and let $\ov{\Omega}$ be an algebraic closure of $\Omega$. One says that a $\Omega$-representation $W$ is \textit{defined} over a subfield $L\subset \Omega$ (or that $L$ is a field of definition of $W$), if there exists an $L$-representation $W'$ such that $\Omega\otimes_{L} W' \simeq W$ as $\Omega$-representations. And one says that $W$ is \textit{definable} over $L$ if there exists an $L$-representation $W'$ such that $\ov{\Omega}\otimes_{L} W' \simeq \ov{\Omega} \otimes_{\Omega} W$ as $\ov{\Omega}$-representations. The \textit{field of moduli} of an $\Omega$-representation $W$ is the intersection of all fields $L\subset\Omega$ over which $W$ is definable. In particular, the field of moduli of the $\Omega$-representation $W$ is equal to the field of moduli of the $\ov{\Omega}$-representation $\ov{\Omega}\otimes_{\Omega} W$.

For the purpose of proving Theorem \ref{field_of_moduli_and_domain_of_def}, it is sufficient to work in a relative and slightly simplified setting: we fix a base field $k$, an algebraic closure $\kb$, and consider only the case $\Omega=\kb$. Given a $\kb$-representation $W$, we let $\mathrm{Def}_k(W)$ be the set of all intermediate fields $k\subset L \subset \kb$ over which $W$ is definable. The field of moduli of $W$ is then equal to $$k_W := \bigcap_{L\in \mathrm{Def}_k(W)} L.$$ By definition, $k\subset k_W \subset \kb$; thus if $k$ is perfect, so is $k_W$. Consider the group $$H_k(W) := \{\tau\in \Aut(\kb/k) \ |\ W^{\tau} \simeq W\}$$ where $W^\tau$ is the representation of $Q$ defined from $W$ and $\tau$ as in Section \ref{rational_pts_section}. The following result explains why the field of moduli is sometimes defined, in other contexts as well, as $\kb^{H_k(W)}$.

\begin{thm}\label{field_of_moduli_as_fixed_pt_field}
Let $W$ be a $\theta$-stable $\kb$-representation of $Q$. The field $\kb^{H_k(W)}$ is a purely inseparable extension of $k_W$. In particular, if $k$ is perfect, then $$k_W= \kb^{H_k(W)}.$$
\end{thm}

To prove Theorem \ref{field_of_moduli_as_fixed_pt_field}, it suffices to show that $\tau\in\Aut(\kb/k)$ satisfies $W^\tau\simeq W$ if and only if $\tau|_{k_W} = \mathrm{Id}_{k_W}$. First, we give a useful characterisation of $k_W$.

\begin{lemma}\label{field_of_def_of_a_stable_orbit}
Let $W$ be a $\theta$-stable $d$-dimensional $\kb$-representation; then $k_W$ is the minimal field of definition $L_W$ of the corresponding $\G(\kb)$-orbit $O_W\subset \Reps(\kb)$.
\end{lemma}

\begin{proof}
Note that $O_W$ is a closed subvariety of $\Reps(\kb)$, as $W$ is GIT-stable; thus $O_W$ has a minimal field of definition $L_W$ by \cite[Proposition 3.11]{Vojta}. Let $O'_W$ be an $L_W$-variety with $O'_W \times_{L_W} \kb \cong O_W$; then $L_W$ is contained in the intersection of all residue fields of closed points of $O'_W$ (as via the projection $O_W \ra O_W'$ the residue field of any closed point in $O'_W$ is a finite extension of $L_W$ and a subfield of $\kb$). In fact, $L_W$ is the intersection of all these residue fields, as $O_W$ is a $\G(\kb)$-orbit and $\G(\kb)$ is defined over $k$. A closed point $x' \in O'_W$ lifts to $x \in O_W$, corresponding to a $\kb$-representation isomorphic to $W$, thus $W$ is definable over $\kappa(x')$ and
$$k_W = \bigcap_{L\in\mathrm{Def}_k(W)} L \quad  \subset  \quad \bigcap_{p \in O'_W} \kappa(p) = L_W.$$ Conversely, if $L\in\mathrm{Def}_k(W)$, then there exists an $L$-representation $W'$ such that $W\simeq \kb\otimes_L W'$, so there is $p\in O'_W$ such that $\kappa(p)\simeq L$. Hence $L_W \subset L$. As this is true for all $L\in\mathrm{Def}_k(W)$, we have $L_W\subset k_W$. 
\end{proof}

\begin{lemma}\label{Galois_gp_of_kb_over_L_W}
With the same assumptions as in Lemma \ref{field_of_def_of_a_stable_orbit}, let $\tau\in\Aut(\kb/k)$. Then $W^\tau\simeq W$ if and only if $\tau|_{L_W} = \mathrm{Id}_{L_W}$, i.e.\ $H_k(W) \simeq \Aut(\kb/L_W)$.
\end{lemma}

\begin{proof}
If $W^\tau\simeq W$ (i.e.\ $\tau\in H_k(W)\subset \Aut(\kb/k)$), then $O_W = O_{W^\tau} = \tau(O_W)$ in $\Reps(\kb)$. By descent theory with respect to the Galois extension $\kb/\kb^{H_k(W)}$, we then have that $O_W$ is defined over $\kb^{H_k(W)}$. So $L_W\subset \kb^{H_k(W)}$. In particular, for all $\tau\in H_k(W)$, one has $\tau|_{L_W} = \mathrm{Id}_{L_W}$. Conversely, let us assume that $\tau\in\Aut(\kb/k)$ satisfies $\tau|_{L_W} = \mathrm{Id}_{L_W}$. Since $O_W$ is defined over $L_W$, this implies that $\tau(O_W) = O_W$, which in turn implies that $W^\tau\simeq W$.
\end{proof}

We can now prove Theorems \ref{field_of_moduli_as_fixed_pt_field} and Theorem \ref{field_of_moduli_and_domain_of_def}.

\begin{proof}[Proof of Theorem \ref{field_of_moduli_as_fixed_pt_field}]
By Lemma \ref{field_of_def_of_a_stable_orbit}, $k_W= L_W$. And by Lemma \ref{Galois_gp_of_kb_over_L_W}, $H_k(W) \simeq \Aut(\kb/L_W)$. So $\kb^{H_k(W)}\simeq \kb^{\Aut(\kb/L_W)}$ is a purely inseparable extension of $L_W$.
\end{proof}

\begin{proof}[Proof of Theorem \ref{field_of_moduli_and_domain_of_def}]
Recall that here $k$ is assumed to be perfect and for the moduli functor $F_{Q,d}^{\theta-gs}$ from \eqref{moduli_functors}, we have, by Theorem \ref{GIT_const_of_ModQd}, a bijective map $$\pi_k: F_{Q,d}^{\theta-gs}(\kb) \overset{\simeq}{\lra} \Modgs(\kb) = \Rep^{\theta-gs}(\kb)/\G(\kb).$$ If $W$ is a $\theta$-stable $d$-dimensional $\kb$-representation of $W$, we denote by $O_W$ the associated $\kb$-point of the $k$-variety $\Modgs$, via the map $\pi_k$. We have seen in Section \ref{rational_pts_section} that $\Gal_k$ acts on $\Modgs(\kb)$ and that $\pi_k$ is $\Gal_k$-equivariant. Therefore, as $O_W$ represents the isomorphism class of the $\kb$-representation $W$, we have that $$H_k(W) \simeq 
\{\tau\in\Gal_k\ |\ \tau(O_W)=O_W\}=:\mathrm{Stab}_{\Gal_k}(O_W),$$ which implies that $\kb^{H_k(W)} \simeq \kappa(O_W)$, the residue field of the point $O_W$ of $\Modgs$. Theorem \ref{field_of_moduli_as_fixed_pt_field} then shows that $\kappa(O_W)\simeq k_W$. The rest of Theorem \ref{field_of_moduli_and_domain_of_def} then follows immediately by base changing to the field $k_W$ and applying Theorem \ref{fibres div alg} to this perfect field: the central division algebra $D_W$ over $k_W$ is the image of $O_W$ under the type map $\mathcal{T}$ and $O_W\in \mathcal{T}^{-1}(D_W)$ is an isomorphism class of a $\theta$-geometrically stable $D_W$-representation of $Q$.
\end{proof}

As the type map can be described via the $\GG_m$-gerbe $\fM_{Q,d}^{\theta-gs} \lra \Modgs$ by Corollary \ref{two_type_maps_agree}, we immediately deduce the following cohomological obstruction to representations being defined over their field of moduli.

\begin{cor}\label{cor_coh_obs_define_field_of_moduli}
Keeping the assumptions and notation of Theorem \ref{field_of_moduli_and_domain_of_def}, the $\kb$-representation $W$ is definable over its field of moduli $k_W = \kappa(O_W)$ if the class of the $\GG_m$-gerbe $p_W^* \fM_{Q,d}^{\theta-gs}$ (given by the pullback of the gerbe $\fM_{Q,d}^{\theta-gs} \lra \Modgs$ along the point $p_W : \spec \kappa(O_W) \ra \Modgs$) is trivial in $H^2_{\et}(\spec k_W, \GG_m)$.
\end{cor}

We saw in Example \ref{quaternionic_rep} how to exhibit $\theta$-stable $\kb$-representations of $Q$ that are not definable over their field of moduli but only over a central division algebra over that field.


\def\cprime{$'$}

\end{document}